\newtheorem{theorem}[equation]{Theorem}
\newtheorem{lemma}[equation]{Lemma}
\newtheorem{proposition}[equation]{Proposition}
\newtheorem{corollary}[equation]{Corollary}
\newtheorem{conjecture}[equation]{Conjecture}
\newtheorem{definition}[equation]{Definition}
\theoremstyle{definition}
\newtheorem{example}[equation]{Example}
\theoremstyle{remark}
\newtheorem{remark}[equation]{Remark}
\numberwithin{equation}{subsection}
\newcommand{\FF}{\mathbb{F}}
\newcommand{\QQ}{\mathbb{Q}}
\newcommand{\RR}{\mathbb{R}}
\newcommand{\TT}{\mathbb{T}}
\newcommand{\GG}{\mathbb{G}}
\newcommand{\CC}{\mathbb{C}}
\newcommand{\NN}{\mathbb{N}}
\newcommand{\Fq}{\mathbb{F}_{q}}
\newcommand{\be}{\mathbf{e}}
\newcommand{\bm}{\mathbf{m}}
\newcommand{\bx}{\mathbf{x}}
\newcommand{\bX}{\mathbf{X}}
\newcommand{\bu}{\mathbf{u}}
\newcommand{\bv}{\mathbf{v}}
\newcommand{\bC}{\mathbf{C}}
\newcommand{\bz}{\mathbf{z}}
\newcommand{\bj}{\mathbf{j}}
\newcommand{\bw}{\mathbf{w}}
\newcommand{\by}{\mathbf{y}}
\newcommand{\cM}{\mathcal{M}}
\newcommand{\cN}{\mathcal{N}}
\newcommand{\cT}{\mathcal{T}}
\newcommand{\cL}{\mathcal{L}}
\newcommand{\cZ}{\mathcal{Z}}
\DeclareMathAlphabet{\matheur}{U}{eur}{m}{n}
\newcommand{\fs}{\mathfrak{s}}
 \DeclareMathOperator{\Lie}{Lie}
\DeclareMathOperator{\Ker}{Ker} \DeclareMathOperator{\GL}{GL}
\DeclareMathOperator{\Mat}{Mat} 
\DeclareMathOperator{\End}{End}
 \DeclareMathOperator{\wt}{wt}
\DeclareMathOperator{\Li}{Li}
\DeclareMathOperator{\rank}{rank}
\DeclareMathOperator{\dep}{dep}
\DeclareMathOperator{\Span}{Span}
\newcommand{\ok}{\overline{k}}
\newcommand{\tr}{\mathrm{tr}}
\newcommand{\Ga}{{\GG_{\mathrm{a}}}}
\newcommand{\Lis}{\Li^{\star}}
\newcommand{\laurent}[2]{{#1 (\!( #2 )\!)}}
\begin{document}

\title[On a conjecture of Furusho over function fields]{{\large{On a conjecture of Furusho over function fields}}}

\author{Chieh-Yu Chang}
\address{Department of Mathematics, National Tsing Hua University, Hsinchu City 30042, Taiwan
  R.O.C.}

\email{cychang@math.nthu.edu.tw}

\author{Yoshinori Mishiba }
\address{Department of Mathematical Sciences, University of the Ryukyus, 1 Senbaru, Nishihara-cho, Okinawa 903-0213, Japan}
\email{mishiba@sci.u-ryukyu.ac.jp}

\thanks{The first author was partially supported by MOST Grant
  107-2628-M-007-002-MY4 and Foundation for The Advancement of Outstanding Scholarship.}
\thanks{The second author was supported by JSPS KAKENHI Grant Numbers JP15K17525 and JP18K13398.}

\subjclass[2010]{Primary 11R58, 11J93}

\date{June 24, 2020}

\begin{abstract}
In the classical theory of multiple zeta values (MZV's), Furusho proposed a conjecture asserting that the $p$-adic MZV's satisfy the same $\QQ$-linear relations that their corresponding real-valued MZV counterparts satisfy. In this paper, we verify a stronger version of a function field analogue of Furusho's conjecture in the sense that we are able to deal with all linear relations over an algebraic closure of the given rational function field, not just the rational linear relations.    To each tuple of positive integers $\fs=(s_1, ..., s_r)$, we construct a corresponding $t$-module together with a specific rational point. The fine resolution (via fiber coproduct) of this construction actually allows us to obtain nice logarithmic interpretations for both the $\infty$-adic MZV and $v$-adic MZV at $\fs$, completely generalizing the work of Anderson-Thakur~\cite{AT90} in the case of $r=1$. Furthermore it enables us to apply Yu's sub-t-module theorem~\cite{Yu97}, connecting any $\infty$-adic linear relation on MZV's with a  sub-$t$-module of a corresponding giant $t$-module. This makes it possible to arrive at the same linear relation for  $v$-adic MZV's.

\end{abstract}

\keywords{Multiple zeta values, $v$-adic multiple zeta values, Carlitz multiple polylogarithms, logarithms of $t$-modules, $t$-motives}

\maketitle
\section{Introduction}
\subsection{A conjecture of Furusho}
Let $\NN$ be the set of positive integers. Recall that the classical multiple zeta values are defined for $\fs=(s_{1},\ldots,s_{r})\in \NN^{r}$ with $s_{1} \geq 2$,
\[\zeta(\fs):=\sum_{n_{1}>n_{2}>\cdots>n_{r}\geq 1}\frac{ 1  }{n_{1}^{s_{1}}\cdots n_{r}^{s_{r}}  } \in \RR^{\times}.\] The weight and depth of the presentation $\zeta(\fs)$ are defined by $\wt(\fs):=\sum_{i=1}^{r}s_{i}$ and $\dep(\fs):=r$ respectively. In the past two decades, there have been a considerable amount of interest and vast developments in the topic of MZV's, which have
arisen in various contexts in number theory and arithmetical algebraic
geometry, etc. It is known that there are many $\QQ$-linear relations among the MZV's produced by the regularized double shuffle relations in~\cite{IKZ06}, but their exact structure remains mysterious. We refer the reader to the books~\cite{An04, Zhao16, BGF19}.

Let $p$ be a prime number. We first briefly review Furusho's $p$-adic MZV's in~\cite{F04}. Fix an $r$-tuple $\fs=(s_{1},\ldots,s_{r})\in \NN^{r}$ with $s_{1}\geq 2$ and note that the MZV $\zeta(\fs)$ is a specialization at $1$ of the one-variable  multiple polylogarithm
\[\Li_{(s_{1},\ldots,s_{r})}(z) :=\sum_{n_{1}>n_{2}>\cdots>n_{r}\geq 1}\frac{ z^{n_{1}}  }{n_{1}^{s_{1}}\cdots n_{r}^{s_{r}}  },   \]
which are generalizations of the classical logarithms.  Furusho considered the one-variable $p$-adic multiple polylogarithm $\Li_\fs(z)_p$, which is the same power series as $\Li_{\fs}(z)$, but treated $p$-adically.  This function converges on the open unit disk centered at $0$ of $\CC_{p}$, where $\CC_{p}$ is the $p$-adic completion of a fixed algebraic closure of $\QQ_{p}$. We note that the open unit disk centered at $0$ of $\CC_{p}$ and the one centered at $1$ of $\CC_{p}$ are disjoint, and so it does not make sense when taking limit $z\rightarrow 1$ on $\CC_{p}$. However, Furusho applied Coleman's $p$-adic iterated integration theory~\cite{Co82}  to make an analytic continuation of $p$-adic multiple polylogarithms and then defined the $p$-adic MZV $\zeta_{p}(\fs)$ to be  a certain limit value at 1 of the analytically continued $p$-adic multiple polylogarithm. The weight and depth of the presentation of the $p$-adic MZV $\zeta_{p}(\fs)$ are defined by $\wt(\fs)$ and $\dep(\fs)$ respectively.  Note that in the case of depth one, Furusho's $p$-adic zeta value $\zeta_{p}(s)$ is identical to the Kubota-Leopoldt $p$-adic zeta value at $s$ up to a scalar multiplication by $(1-p^{-s})^{-1}$.

In \cite{FJ07}, it was shown that the $p$-adic MZV's satisfy the regularized double shuffle relations~\cite{IKZ06}. One  precise connection between real-valued MZV's and $p$-adic MZV's is conjectured by Furusho: the $p$-adic MZV's satisfy the same $\QQ$-linear relations that their corresponding real-valued MZV's satisfy.
\begin{conjecture}[\textnormal{Furusho}\label{Conj: Furusho}]
Let $p$ be a prime number. Let $n\geq 2$ be an integer and  let $\mathfrak{Z}_{n}$ be the $\QQ$-vector space spanned by all real-valued MZV's of weight $n$, and $\mathfrak{Z}_{n,p}$ be the $\QQ$-vector space spanned by all $p$-adic MZV's  of weight $n$. Then we have a well-defined surjective $\QQ$-linear map
\[\mathfrak{Z}_{n} \twoheadrightarrow \mathfrak{Z}_{n,p} \]
given by 
\[\zeta(\fs)\mapsto \zeta_{p}(\fs) .\]
\end{conjecture}

The conjecture above is implicit in the two papers \cite{F06, F07}. Considering the graded algebra $\mathfrak{Z}:=\mathbb{Q} \oplus \bigoplus_{n\geq 2}\mathfrak{Z}_{n}$ (resp. $\mathfrak{Z}_p:=\mathbb{Q} \oplus \bigoplus_{n\geq 2}\mathfrak{Z_{n,p}}$), Furusho \cite[Conj.~A]{F06} conjectured that $\mathcal{O}({\underline{GRT}}_1)$ is isomorphic to $\mathfrak{Z}/(\pi^2)$ and in \cite[Sec.~3.1]{F07}  he explained that
there is a surjection from $\mathcal{O}({\underline{GRT}}_1)$ to $\mathfrak{Z}_p$. Here ${\underline{GRT}}_1$ is the unipotent part of the Grothendieck-Teichm\"uller group ${\underline{GRT}}$, which is a pro-algebraic group over $\mathbb{Q}$. For more details, see~\cite{F06, F07}.

Note that the Ihara-Kaneko-Zagier conjecture~\cite{IKZ06} asserts that the regularized double shuffle relations generate all the $\QQ$-linear relations among real-valued MZV's. However, the IKZ conjecture is a very difficult problem in classical transcendence theory and  it would imply Conjecture~\ref{Conj: Furusho} because of the result in~\cite{FJ07}. To date,  Conjecture~\ref{Conj: Furusho} is still open. In this paper, we come up with new ideas through logarithmic points of view to verify a stronger version of Furusho's conjecture in the setting of function fields in positive characteristic.

\subsection{The main result}
  Let $A:=\FF_{q}[\theta]$ be the polynomial ring in the variable $\theta$ over the finite field $\FF_{q}$ of $q$ elements with characteristic $p$.  Let $k$ be the fraction field of $A$ equipped with the normalized absolute value $|\cdot|_{\infty}$ associated with the infinite place $\infty$ for which $|\theta|_{\infty}=q$. Let $k_{\infty}$ be the completion of $k$ with respect to $|\cdot|_{\infty}$, and let $\CC_{\infty}$ be the completion of a fixed algebraic closure of $k_{\infty}$ with respect to the canonical absolute value extending  $|\cdot|_{\infty}$ on $k_{\infty}$. Let $\ok$ be the algebraic closure of $k$ in $\CC_{\infty}$.

In what follows, we will  review the $\infty$-adic multiple zeta values initiated by Thakur~\cite{T04}.  For a finite place $v$ of $k$, we will define one kind of $v$-adic multiple zeta values and abbreviate them as \lq\lq $v$-adic MZV's \rq\rq. To distinguish between the $\infty$-adic and $v$-adic settings, throughout this paper  we will  use \lq\lq MZV's\rq\rq~for Thakur's $\infty$-adic multiple zeta values unless we state the contrary. We further  mention that Thakur~\cite[p.~196]{T04} defined $v$-adic MZV's  using  Goss-Kummer congruences but his definition of $v$-adic MZV's is different from ours.

Fixing any $r$-tuple $\fs=(s_{1},\ldots, s_{r})\in \NN^{r}$, Thakur~\cite{T04} defined the following positive characteristic MZV's:
\begin{equation}\label{E:Def of MZV's}
\zeta_{A}(\fs):=\sum \frac{1}{a_{1}^{s_1}\cdots a_{r}^{s_r}} \in k_{\infty}.
\end{equation}
Here $a_{1},\ldots,a_{r}$ run over all monic polynomials in $A$ satisfying the strict inequalities: $|a_1|_{\infty}> |a_{2}|_{\infty}>\cdots >|a_r|_{\infty}$. Note that since our absolute value $|\cdot|_{\infty}$ is non-archimedean, the series $\zeta_{A}(\fs)$ converges $\infty$-adically in $k_{\infty}$ for all $\fs\in \NN^{r}$. Furthermore, it is shown by Thakur~\cite{T09} that every $\zeta_{A}(\fs)$ is non-vanishing. We call $\tiny{\rm{wt}}(\fs):=\sum_{i=1}^{r}s_{i}$ the weight and ${\tiny{\rm{dep}} (\fs)}:=r$ the depth of the presentation of $\zeta_{A}(\fs)$. Depth one MZV's were introduced by Carlitz~\cite{Ca35} and so are called Carlitz zeta values.  

Unlike the simple identity between real-valued MZV's and the specialization at $1$ of multiple polylogarithms in the classical theory, in the function field setting for any tuple $\fs\in \NN^{r}$ there are some explicit constants $b_{\ell}\in A$,  tuples of indices $\fs_{\ell}$ with $\wt(\fs_{\ell})=\wt(\fs), \dep(\fs_{\ell})\leq \dep(\fs)$ and integral points $\bu_{\ell}\in A^{\dep(\fs_{\ell})}$ so that  $\zeta_{A}(\fs)$ can be expressed as  the following formula (see Theorem~\ref{T:MZV as Lis}):
\[  \zeta_{A}(\fs)=\frac{1}{ \Gamma_{s_{1}} \cdots \Gamma_{s_{r}} } \sum_{\ell}b_{\ell} \cdot (-1)^{\tiny{\rm{dep}(\fs_{\ell})}-1} \Lis_{\fs_{\ell}}(\bu_{\ell}).\]
Here $\Gamma_{s_{i}}$ stands for the Carlitz factorials in $A$ given in (\ref{E:Def of Gamma}) and $\Lis_{\fs_{\ell}}$ is the $\fs_{\ell}$th Carlitz multiple star polylogarithm (in several variables), abbreviated as CMSPL and defined in~(\ref{E:Def of CMSPL}). Note that this identity was due to Anderson-Thakur~\cite{AT90} in the depth one case, where CMSPL's are reduced to Carlitz polylogarithms. For more details, see Sec.~\ref{Sec: Formulae for MZV's via CMPL's}.

Fix a finite place $v$ of $k$ and let $k_{v}$ be the completion of $k$ at $v$. Let $\CC_{v}$ be the $v$-adic completion of a fixed algebraic closure of $k_{v}$. In analogy with Kubota-Leopoldt's $p$-adic zeta function, Goss~\cite{Go79}  defined a $v$-adic zeta function that interpolates Carlitz zeta values at non-positive integers and obtained $v$-adic zeta values at positive integers, which are simply called {\it{ Goss' $v$-adic zeta values}}.

Our definition of $v$-adic MZV's is inspired by Furusho's definition of $p$-adic MZV's. We first note that the CMSPL $\Lis_{\fs_{\ell}}$ converges $v$-adically on the open unit ball centered at the zero of $\CC_{v}^{\dep(\fs_{\ell})}$. In \cite[Sec.~4]{CM17},  by using twists of certain $t$-module actions we showed that  $\Li_{\fs_{\ell}}^{\star}$ can be analytically continued to the closed unit ball centered at the zero of  $\CC_{v}^{\dep(\fs_{\ell})}$, and so it can be evaluated $v$-adically at $\bu_{\ell}$ and we denote this value by $\Li_{\fs_{\ell}}^{\star}(\bu_{\ell})_{v}$.  This leads naturally to the definition of $v$-adic MZV $\zeta_{A}(\fs)_{v}$ parallel to Furusho's approach in Definition~\ref{Def: v-adic MZV}:
\[  \zeta_{A}(\fs)_{v}:=\frac{1}{  \Gamma_{s_{1}} \cdots \Gamma_{s_{r}}  } \sum_{\ell}b_{\ell}\cdot (-1)^{\tiny{\rm{dep}(\fs_{\ell})}-1} \Lis_{\fs_{\ell}}(\bu_{\ell})_{v} \in k_{v}.\] The weight and depth of this presentation $\zeta_{A}(\fs)_{v}$ are defined to be $\wt(\fs)$ and $\dep(\fs)$ respectively.  

Note that here we do not exclude the $v$-part and so for each $s \in \NN$, our $v$-adic zeta value $\zeta_{A}(s)_{v}$ is identical to Goss' $v$-adic zeta value~\cite{Go79} at $s$ multiplied by $(1-v^{-s})^{-1}$ (see \cite[Theorem 3.8.3. (II)]{AT90}). This phenomenon is surprisingly parallel to the $p$-adic case mentioned above.  The main theorem of this paper stated as Theorem~\ref{Thm: Well-defined map} is to establish a stronger version of a function field analogue of Conjecture~\ref{Conj: Furusho} in the following result as it holds over algebraic coefficients.

\begin{theorem}[Theorem~\ref{Thm: Well-defined map}]\label{Thm: Well-defined map in Introduction} Let $v$ be a finite place of $k$ and fix an embedding $\ok \hookrightarrow \CC_{v}$. Let $n$ be a positive integer and  let $\overline{ \cZ}_{n}$ be the $\ok$-vector space spanned by all $\infty$-adic MZV's of weight $n$, and $\overline{\cZ}_{n,v}$ be the $\ok$-vector space spanned by all $v$-adic MZV's  of weight $n$. Then we have a well-defined surjective $\ok$-linear map
\[\overline{\cZ}_{n} \twoheadrightarrow \overline{\cZ}_{n,v} \]
given by 
\[\zeta_{A}(\fs)\mapsto \zeta_{A}(\fs)_{v} \]
and its kernel contains the one-dimensional vector space $\ok\cdot \zeta_{A}(n)$ when $n$ is divisible by $q-1$.
\end{theorem}

Since by \cite{Go79} we have $\zeta_{A}(n)_{v}=0$ when $n$ is divisible by $q-1$, an interesting consequence of the theorem above is that if $\zeta_{A}(\fs)$ is \lq\lq Eulerian\rq\rq, ie., $\zeta_{A}(\fs)$ is a $k$-multiple of the $\wt(\fs)$th power of the Carltiz period, then $\zeta_{A}(\fs)_{v}=0$. We mention that for fixed weight, the spirit of the result above is that $v$-adic MZV's  satisfy the same $\ok$-linear relations as their corresponding $\infty$-adic MZV's satisfy. For example, Thakur~\cite[Thm.~5]{T09b}  showed that $\zeta_{A}(m,m(q-1))=\zeta_{A}(mq)/ (\theta-\theta^{q})^{m}$ for $m\leq q$, and so by Theorem~\ref{Thm: Well-defined map in Introduction} we have $\zeta_{A}(m,m(q-1))_{v}=\zeta_{A}(mq)_{v}/(\theta-\theta^{q})^{m}$. Note that in~\cite{C14}, we have that  all $\ok$-linear relations among MZV's are generated by $k$-linear relations among MZV's of the same weight. This is Baker's phenomenon for MZV's over function fields and it  is out of reach for the classical MZV's as it is still very difficult to prove transcendence results in the classical case.

Finally, we mention that Thakur~\cite{T10} showed that  $\infty$-adic MZV's form a $k$-algebra. From numerical evidence using Sage, it seems that our $v$-adic MZV's form a $k$-algebra and the map from the space of MZV's to the space of $v$-adic MZV's is a $k$-algebra homomorphism. These questions are listed at the end of this paper and  we plan to tackle them in a future project.

\begin{remark}
Let $\fs=(s_{1},\ldots,s_{r})\in \NN^{r}$ and suppose that $\zeta_{A}(\fs)$ is  \lq\lq Eulerian\rq\rq. By~\cite[Cor.~4.2.3]{CPY14}, we have that each $s_{i}$ is divisible by $q-1$ for $i=1,\ldots,r$, and that each of the MZV's
\[ \zeta_{A}(s_{2},\ldots,s_{r}),\zeta_{A}(s_{3},\ldots,s_{r})\ldots,\zeta_{A}(s_{r})   \]
is \lq\lq Eulerian\rq\rq. Therefore, under the assumption  that $\zeta_{A}(\fs)$ is  \lq\lq Eulerian\rq\rq \ we have the simultaneous vanishing of the $v$-adic MZV's
\[ \zeta_{A}(s_{1},\ldots,s_{r})_{v}=\zeta_{A}(s_{2},\ldots,s_{r})_{v}=\cdots=\zeta_{A}(s_{r})_{v}=0  \]
by Theorem~\ref{Thm: Well-defined map in Introduction}, but we do not know whether the reverse direction is valid.  This phenomenon matches with one direction of  the criterion~\cite[Cor.~5.1.3]{CM17} for when Carlitz multiple polylogarithms at algebraic points are \lq\rq Eulerian\rq\rq.
\end{remark}

\subsection{Strategy of proof} In this section, we describe our strategy in proving Theorem~\ref{Thm: Well-defined map}. We first recall W\"ustholz's analytic subgroup theorem in  classical transcendence theory. 

\begin{theorem}[W\"ustholz~\cite{W89}]
Let $G$ be a connected commutative algebraic group defined over $\overline{\QQ}$. Let $\exp_{G}$ be the exponential map of $G$ when regarding $G(\CC)$ as a Lie group. Let ${\bu}\in \Lie G(\CC)$ satisfy $\exp_{G}(\bu)\in G(\overline{\QQ})$, and put $T_{\bu}$ to be the smallest linear subspace of $\Lie G (\CC)$ that is defined over $\overline{\QQ}$ and that contains the vector $\bu$. Then $T_{\bu}=\Lie H$ for some algebraic subgroup $H$ of $G$ that is defined over $\overline{\QQ}$. 
\end{theorem}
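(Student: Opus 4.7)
My plan is to adopt the classical transcendence method, arguing by contradiction. First I would pass to the smallest algebraic subgroup $H \subseteq G$ defined over $\overline{\QQ}$ whose complex Lie algebra contains $\bu$. Because $\Lie H$ is itself an $\overline{\QQ}$-rational subspace containing $\bu$, we get $T_{\bu} \subseteq \Lie H$ automatically; replacing $G$ by $H$, I may assume that no proper algebraic subgroup of $G$ over $\overline{\QQ}$ has a Lie algebra containing $\bu$, so it remains to show $T_{\bu} = \Lie G$. Suppose for contradiction that $T_{\bu} \subsetneq \Lie G$, and set $t := \dim_{\overline{\QQ}} T_{\bu} < \dim G =: g$.

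Next I would construct an auxiliary entire function on $\Lie G(\CC)$ that vanishes to high order along $T_{\bu}$ at many multiples of $\bu$. Fix a projective embedding of a suitable compactification of $G$ (equivalently, a very ample line bundle), and let $\theta_0, \ldots, \theta_N$ be the entire functions on $\Lie G(\CC)$ obtained by pulling back the homogeneous coordinates via $\exp_G$. For parameters $D, S, T \in \NN$ to be balanced later (with $D$ a degree, $S$ a range of multiples of $\bu$, and $T$ an order of vanishing along $T_{\bu}$), Siegel's lemma produces a nonzero polynomial $P$ of degree $\leq D$ with coefficients in a fixed number field and of controlled logarithmic height, such that
\[
\partial^{\alpha} F(k\bu) = 0 \quad \text{for } F(\bz) := P\bigl(\theta_0(\bz), \ldots, \theta_N(\bz)\bigr),\ k = 0, \ldots, S,\ |\alpha| \leq T,
\]
where the derivatives $\partial^{\alpha}$ are taken tangentially to $T_{\bu}$. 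The crucial input that makes Siegel's lemma applicable is that $\bu \in T_{\bu}(\CC)$ with $\exp_G(k\bu) \in G(\overline{\QQ})$ for every $k$, together with the $\overline{\QQ}$-rationality of $T_{\bu}$, so that the linear system to solve has algebraic coefficients.

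Then I would carry out the analytic extrapolation: using a Schwarz-type estimate along the complex span of $T_{\bu}$ together with growth bounds on $F$, the vanishing propagates to points $k\bu$ for $k$ in a range much larger than $S$, at a slightly reduced order $T' < T$. The final move is a multiplicity (zero) estimate for commutative algebraic groups, in the form due to Philippon and Masser-W\"ustholz: if a nonzero $F$ of the above shape vanishes to sufficient order along a fixed subspace at enough translates of a point in $G$, then there must exist a proper algebraic subgroup of $G$ defined over $\overline{\QQ}$ whose Lie algebra contains $T_{\bu}$, and in particular contains $\bu$. Since $t < g$, this contradicts the minimality of $G$ arranged in the first step, which would complete the proof.

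The main obstacle is the zero estimate. Balancing the parameters $D, S, T, T'$ and the height bound so that the Philippon-Masser-W\"ustholz multiplicity estimate actually triggers forces very tight numerical inequalities, and the proof of that multiplicity estimate itself, especially for general commutative groups mixing additive, multiplicative and abelian factors, is the technical heart of the whole theorem and substantially harder than the Siegel-lemma and extrapolation inputs.
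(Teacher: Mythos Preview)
The paper does not prove this theorem. It is stated in the introduction as W\"ustholz's analytic subgroup theorem, with a reference to \cite{W89}, purely as background and motivation for the function field analogue (Yu's sub-$t$-module theorem, Theorem~\ref{Thm: Yu Introduction}), which is likewise only quoted. So there is no ``paper's own proof'' to compare your proposal against.

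For what it is worth, your sketch is a faithful outline of the classical Baker--W\"ustholz strategy: reduce to the case where $G$ has no proper $\overline{\QQ}$-subgroup whose Lie algebra contains $\bu$, build an auxiliary function via Siegel's lemma vanishing to high order along $T_{\bu}$ at many multiples of $\bu$, extrapolate by a Schwarz lemma, and conclude by a multiplicity estimate \`a la Philippon/Masser--W\"ustholz. Your identification of the zero estimate as the hard core is accurate. But none of this appears in the present paper, which uses the theorem (or rather its function field counterpart) as a black box.
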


Note that W\"ustholz's theorem is one of the most powerful tools in classical transcendence theory when tackling transcendence question about generalized logarithms. The spirit of the theorem above is to assert that the $\overline{\QQ}$-linear relations among the coordinates of the generalized logarithm $\bu$ of an algebraic point arise from the defining equations of $\Lie H$ over $\overline{\QQ}$. One can derive Baker's celebrated theorem on linear forms in logarithms of algebraic numbers as well as its elliptic analogue using this  analytic subgroup theorem of W\"ustholz. See~\cite{BW07}.

In the function field setting, we have an analogue of W\"ustholz's theorem, called Yu's sub-$t$-module theorem  (see Sec.~\ref{Subsec: Yu's sub-t-module thm} for related definitions).

\begin{theorem}[{\cite[Thm.~0.1]{Yu97}}]\label{Thm: Yu Introduction}
Let $G$ be a regular t-module defined over $\ok$. Let $Z$ be a vector in $\Lie G(\CC_{\infty})$ such that $\exp_{G}(Z)\in G(\ok)$. Then the smallest linear subspace in $\Lie G(\CC_{\infty})$ defined over $\ok$, which is invariant under $\partial[t]$ and contains $Z$, is the tangent space at the origin of a sub-$t$-module $H$ of $G$ over $\ok$. 
\end{theorem}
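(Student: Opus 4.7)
The plan is to adapt the Baker--Wüstholz analytic subgroup machinery to Anderson's category of $t$-modules in characteristic $p$. I argue by contradiction: assume that the minimal $\partial[t]$-invariant $\ok$-subspace $V \subseteq \Lie G(\CC_{\infty})$ containing $Z$ is \emph{not} of the form $\Lie H$ for any sub-$t$-module $H$ of $G$ defined over $\ok$. After replacing $G$ by the smallest $\ok$-algebraic subgroup whose Lie algebra contains $V$ and which is stable under the $t$-action (the intersection of all such subgroups still contains $\exp_G(Z)$), I may assume that $V \subsetneq \Lie G$ and yet $\exp_G(Z)$ is Zariski dense in $G$. The goal is to extract an arithmetic contradiction from this geometric configuration.

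The heart of the argument is the construction of an auxiliary function. Using a function-field Siegel lemma tailored to $A$-module coefficients, one selects a nonzero polynomial $P \in \ok[G]$ of carefully bounded degree such that
\[
\varphi_{P}(a) := P\!\left(\exp_{G}(\partial[a]\,Z)\right), \qquad a \in A,
\]
vanishes on a large ``box'' of values $\{a \in A : |a|_{\infty} \le S\}$. The two essential analytic inputs are: (i) rigid-analytic growth estimates for $\exp_{G}$ restricted to $V$, giving an upper bound for $|\varphi_{P}(a)|_{\infty}$ in terms of $|a|_{\infty}$ and the parameters of $P$; and (ii) Liouville-type lower bounds coming from the integrality $\exp_{G}(\partial[a]Z) \in G(\ok)$ for every $a \in A$.

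The proof then proceeds by an extrapolation--zero-estimate dichotomy. Iterating the $t$-action on $G$, together with the $\partial[t]$-stability of $V$, lets one leverage the initial vanishing of $\varphi_{P}$ to produce vanishing on a substantially larger set of $A$-points than the one built in by Siegel. A zero estimate for $t$-modules, in the spirit of Anderson--Brownawell--Masser and Denis, then asserts that such extensive vanishing forces $P$ to be supported on the translates of a \emph{proper} sub-$t$-module $H' \subsetneq G$ containing the entire orbit $\{\exp_{G}(\partial[a]Z)\}_{a\in A}$, and in particular $\exp_{G}(Z)$. This contradicts the reduction step that $G$ was already the smallest algebraic $\ok$-subgroup with this property, completing the proof.

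The main obstacle is precisely the zero-estimate step. For a general Anderson $t$-module --- as opposed to the classical cases of additive groups, tori, or Drinfeld modules --- one needs a sufficiently sharp multiplicity estimate bounding how a hypersurface can intersect a one-parameter exponential trajectory, and in positive characteristic Frobenius twists generate purely inseparable phenomena that must be excluded via a careful separation of the Lie-algebra data from the algebraic-group data. A secondary technical difficulty is the calibration of the function-field Siegel lemma: the archimedean box-principle has to be replaced by a degree-based counting argument keyed to the height--degree dictionary on $A$-lattices inside $\ok^{n}$, and all estimates have to be uniform in the parameters $S$ and $T$ chosen for the auxiliary construction.
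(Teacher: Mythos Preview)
The paper does not contain a proof of this theorem at all: it is quoted verbatim as \cite[Thm.~0.1]{Yu97} and used as a black box in Section~\ref{Sec: v-adic MZV}. There is therefore nothing in the present paper against which to compare your argument.

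That said, your outline is a fair high-level summary of the strategy Yu actually follows in \cite{Yu97}: reduction to a minimal ambient $t$-module, construction of an auxiliary polynomial via a Siegel-type lemma, analytic extrapolation along the $A$-orbit $\{\exp_G(\partial[a]Z)\}$, and a zero/multiplicity estimate forcing a proper sub-$t$-module. Two points deserve comment. First, what you have written is a roadmap, not a proof; every step you list (``rigid-analytic growth estimates'', ``Liouville-type lower bounds'', ``zero estimate for $t$-modules'') is a substantial piece of work in its own right, and you have not supplied any of the quantitative inequalities needed to close the argument. Second, you never explain where the hypothesis that $G$ is \emph{regular} enters. In Yu's proof regularity is essential: it guarantees that the $a$-torsion of $G$ has the expected size for every $a\in\FF_q[t]$, which is precisely what is needed to make the zero/multiplicity estimate effective (it controls how many cosets of a candidate sub-$t$-module can absorb the zeros produced by extrapolation). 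Without invoking regularity somewhere, your contradiction step---``such extensive vanishing forces $P$ to be supported on translates of a proper sub-$t$-module''---does not go through.
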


The key ideas in proving Theorem~\ref{Thm: Well-defined map in Introduction} arise from the following observation. Suppose that we have $G, Z, H$ given in Theorem~\ref{Thm: Yu Introduction}. We put $\bv:=\exp_{G}(Z)\in H(\bar{k})$ and assume that $\log_{G}$ converges $v$-adically at $\bv$ denoted by $\log_{G}(\bv)_v$ and suppose further that  $\log_{G}(\bv)_v$ lies in $\Lie H(\CC_{v})$.  Regarding $\Lie H$ as a linear subvariety  of $\mathbb{A}^{\dim G}$ over $\ok$, we see that the coordinates of $Z$ and $\log_{G}(\bv)_{v}$ satisfy the defining equations of $\Lie H$ over $\ok$ since $Z\in \Lie H(\CC_{\infty})$ and $\log_{G}(\bv)_{v}\in \Lie H(\CC_{v})$. Therefore, to prove Theorem~\ref{Thm: Well-defined map in Introduction} our aim is to create suitable $G, Z, \bv, H$ as above and then relate the MZV's (resp. $v$-adic MZV's) in question to \lq suitable\rq~ coordinates of $Z$ (resp. corresponding coordinates of $\log_{G}(\bv)_v$). This novel method fully illustrates the spirit of the result in Theorem~\ref{Thm: Well-defined map in Introduction} through logarithms in a robust way, which simultaneously deals with both $\infty$-adic and $v$-adic special values and which is completely different from the point of view in the theory of classical MZV's. We believe that this kind of strategy via $\Lie H$ will explain more fundamental phenomena in a broader context.

\subsection{Logarithmic interpretations for MZV's}  In the seminal paper~\cite{AT90}, Anderson and Thakur gave  logarithmic interpretations for Carlitz zeta values and $v$-adic Goss zeta values, where $v$ is a finite place of $k$. For each Carlitz zeta value $\zeta_{A}(s)$, we consider the $s$th tensor power of the Carlitz module denoted by $(\bC^{\otimes {s}}, [\cdot]_s)$ in (\ref{E: Def of C otimes s}). Anderson and Thakur explicitly constructed a special point $\bv_{s}\in \bC^{\otimes {s}}(k)$ and a vector $Z_{s}\in \Lie \bC^{\otimes s}(\CC_{\infty})$ so that up to an explicit multiple $\Gamma_{s}$ in $A$, $\zeta_{A}(s)$ occurs as the $s$th coordinate of $Z_{s}$ and $\exp_{\bC^{\otimes s}}(Z_{s})=\bv_{s}$, where $\exp_{\bC^{\otimes s}}$ is the exponential map of $\bC^{\otimes s}$ (see Sec.~\ref{sec:Anderson t-modules}). It is further shown that the logarithm of $\bC^{\otimes s}$ converges $v$-adically at $[a]_{s}\bv_s$ for some nonzero $a\in \FF_{q}[t]$ and the last coordinate of this $v$-adic logarithmic vector gives $a(\theta)\Gamma_{s}\zeta_{A}(s)_{v}$.

Around that time, Yu~\cite{Yu91} developed a transcendence theory for the last coordinate of the logarithm of $\bC^{\otimes s}$ at algebraic points.  As an important consequence, Yu combined his work with that of
Anderon-Thakur to derive  the transcendence of all Carlitz zeta values as well as transcendence of $\zeta_{A}(s)_{v}$ for positive integers $s$ not divisible by $q-1$. Yu's transcendence results surpass the classical situation. Later on, Yu extensively generalized the $\infty$-adic transcendence theory in~\cite{Yu91} to the most general setting stated as Theorem~\ref{Thm: Yu Introduction}.  The transcendence of arbitrary MZV was obtained by the first author of the present paper~\cite{C14} using the current $t$-motivic transcendence theory, in particular the so-called ABP criterion, developed by Anderson, Brownawell and Papanikolas~\cite{ABP04}.

In \cite[Thm.~4.1.1]{C16}, the first author of the present paper gave a logarithmic interpretation for those MZV's $\zeta_{A}(s_{1},\ldots,s_{r})$ which have the property that $\zeta_{A}(s_{2},\ldots,s_{r})$ is {\it{Eulerian}}. The following two theorems are complete generalizations of Anderson-Thakur's work described above to arbitrary $\infty$-adic MZV's and $v$-adic MZV's (see Sec.~\ref{sec:Anderson t-modules} for the related definitions).

\begin{theorem}[Theorem~\ref{Thm: Infty-adic LogMZV}]\label{Thm: Introduction}
Given any $r$-tuple $\fs=(s_{1},\ldots,s_{r})\in \NN^{r}$, we put $n:=\wt(\fs)$. We explicitly construct a uniformizable $t$-module $G_{\fs}$ that is defined over $k$, a special point $\bv_{\fs} \in G_{\fs}(k)$ and a vector $Z_{\fs}\in \Lie G_{\fs}(\CC_{\infty})$ so that 
\begin{itemize}
\item[(a)] $\Gamma_{s_{1}}\cdots \Gamma_{s_{r}}\zeta_{A}(\fs)$ occurs as the $n$th coordinate of $Z_{\fs}$. 
\item[(b)] $\exp_{G_{\fs}}(Z_{\fs})=\bv_{\fs}$.
\end{itemize}
\end{theorem}
\begin{theorem}[Theorem~\ref{T: LogInt for v-adic MZV}]\label{T: v-adic MZV Intro} Fix a finite place $v$ of $k$ and let notation be given in Theorem~\ref{Thm: Introduction}. We  take a nonzero $a \in \FF_{q}[t]$ for which $|[a] \bv_{\fs}|_{v} < 1$.  Then the $n$th coordinate of $\log_{G_{\fs}}([a] \bv_{\fs})_{v}$ is given  by $a(\theta)\Gamma_{\fs}\zeta_{A}(\fs)_{v}$.
\end{theorem}

With the two theorems above, one is naturally lead to the naive question whether classical  MZV's can be interpreted via generalized logarithms at algebraic points fitting into Wuestholz's analytic subgroup theorem. However, the category of mixed Tate motives in characteristic zero  is certainly much more subtle than our category of t-motives in positive characteristic. Therefore the dream of connecting classical MZV's with $p$-adic MZV's is currently far beyond reach.

As mentioned above, the result of Theorem~\ref{Thm: Introduction} for the depth one case was established by Anderson-Thakur~\cite{AT90}. However, we have not been able to find an easier way to generalize their methods to the higher depth case. In \cite{AT90}, there are two crucial points in the scheme of their proof:

\begin{enumerate}
\item[(1)] Interpolation of power sums (see~\cite[(3.7.4)]{AT90}).
\item[(2)] Formulas for the right lower corner of coefficient matrices of the logarithm of $\bC^{\otimes s}$ (see~\cite[Prop.~2.1.5]{AT90}).
\end{enumerate}
Property $(1)$ enables one to connect $\zeta_{A}(s)$ with a $k$-linear combination of the $s$th Carlitz polylogarithm at certain integral points, and $(2)$ allows one to express the last coordinate of  the logarithm of $\bC^{\otimes s}$ at a specific special point as an evaluation of the $s$th Carlitz polylogarithm.  Note that the interpolation property $(1)$ was used by the first author of the present paper to express each MZV $\zeta_{A}(\fs)$ as a $k$-linear combination of the $\fs$th Carlitz multiple polylogartihm at integral points~\cite{C14}. 

Inspired by the period interpretation of MZV's in \cite{AT09}, for each MZV $\zeta_{A}(\fs)$ the authors of \cite{CPY14} constructed a $t$-module $E_{\fs}'$ defined over $A$ and a special point $\bv_{\fs}\in E_{\fs}'(A)$ to establish a criterion in terms of $(E_{\fs}',\bv_{\fs})$ for determining when $\zeta_{A}(\fs)$ is a $k$-rational multiple of $\zeta_{A}(\wt(\fs))$. It is natural to ask or predict whether $\zeta_{A}(\fs)$ can be connected to the logarithm of $E_{\fs}'$.  The difficulty along this direction is that in general the $t$-module $E_{\fs}'$ is complicated, and so far we do not know how to spell out a rule of writing it down explicitly except case by case (see~\cite[Sec.~6.1.1]{CPY14}). Therefore, it is difficult to compute the coefficient matrices of the logarithm of $E_{\fs}'$ following Anderson-Thakur's methods, which involve recursive matrix calculations.  For other instances involving calculations of the logarithm of a higher dimensional $t$-module, see~\cite{CM17, G17}.

To circumvent the difficulty mentioned above, we introduce new techniques based on fiber coproducts of Anderson dual $t$-motives  and we sketch the ideas to prove Theorem~\ref{Thm: Introduction} below. Fix an $r$-tuple $\fs=(s_{1},\ldots,s_{r})\in \NN^{r}$ with $n:={\tiny{\rm{wt}}}(\fs)$.

\begin{itemize}
\item[(I).] Based on the formula~\cite[Thm.~5.5.2]{C14} we further express $\Gamma_{s_{1}}\cdots \Gamma_{s_{r}}\zeta_{A}(\fs)$ as an explicit $A$-linear combination of  CMSPL's at some integral points. See Theorem~\ref{T:MZV as Lis}.
\item[(II).] For each triple $(b_{\ell},\fs_{\ell},\bu_{\ell})$ occurring in the right hand side of the identity in Theorem~\ref{T:MZV as Lis}, following~\cite{CM17} we explicitly construct a {\it{uniformizable}} $t$-module $G_{\ell}$ defined over $k$ and a special point $\bv_{\ell}\in G_{\ell}(k)$ and show that the logarithm $\log_{G_{\ell}}$ of $G_{\ell}$ converges at the special point $\bv_{\ell}$, and that the $n$th coordinate of $\log_{G_{\ell}}(\bv_{\ell})$ gives $(-1)^{\tiny{\rm{dep}}(\fs_{\ell})-1}\Lis_{\fs_{\ell}} (\bu_{\ell})$. See Theorem~\ref{theorem_log_li}.
 \item[(III).] We mention that $G_{\ell}$ comes from a rigid analytically trivial Anderson dual $t$-motive $\cM_{\ell}'$ with $C^{\otimes n}$ as a sub-$t$-motive, where $C^{\otimes n}$ is the $n$th tensor power of the Carlitz $t$-motive (see Remark~\ref{Rem: R.A.T of M'}). We then define $\cM$ to be the fiber coproduct of those $\cM_{\ell}'$ over  $C^{\otimes n}$ and show that it is rigid analytically trivial in Proposition~\ref{Prop: R.A.T}. Such $\cM$ corresponds to a uniformizable $t$-module $G_{\fs}$ and one has a natural morphism $\pi: \oplus_{\ell} G_{\ell}\rightarrow G_{\fs}$ defined over $k$ (see~Lemma~\ref{L:Key Lemma}). We then define $Z_{\ell}:=\log_{G_{\ell}}(\bv_{\ell})$, $Z_{\fs}:=\partial \pi \left( (\partial[b_{\ell}(t)] Z_{\ell})_{\ell}\right)\in \Lie G_{\fs} (\CC_{\infty})$, and $\bv_{\fs}:=\pi \left( ([b_{\ell}(t)] \bv_{\ell})_{\ell}\right)\in G_{\fs}(k)$, where $b_{\ell}\in A$ are given in Theorem~\ref{T:MZV as Lis} and $\partial[\cdot]$ is given in (\ref{E:partial rho a}).
\item[(IV).] In Lemma~\ref{Lemma: Key Lemma}, we  show that the $n$th coordinate of $Z_{\fs}$ is exactly the summation of the $n$th coordinate of $\partial[b_{\ell}(t)] Z_{\ell}$. Then by the formula in Theorem~\ref{T:MZV as Lis}  the desired result follows.
\end{itemize}
\subsection{Organization of this paper} We mention that one of our goals in writing this paper has been  to introduce our techniques in as general and robust a form as possible. Therefore we do not organize this paper in order matching the steps from (I) to (IV) above.   We first review the related theory of Anderson $t$-modules and Anderson dual $t$-motives in Section~\ref{Sec: Fiber coproducts}, and then consider the fiber coproducts of  Anderson dual $t$-motives in a setting as general as possible. The purpose of Section~\ref{Sec: Key Lemma} is to establish the key result in Lemma~\ref{Lemma: Key Lemma} for handling tractable coordinates of logarithmic vectors with respect to the fiber coproduct in question. Then step~(IV) above becomes a consequence of Lemma~\ref{Lemma: Key Lemma}. Section~\ref{Sec: log G ell} is devoted to verify Step~(II) above. In  Section~\ref{Sec: Proof of Main Thm} we set the stage for our MZV's: to any given MZV we associate a fiber coproduct family of Anderson dual $t$-motives satisfying the hypothesis of Lemma~\ref{Lemma: Key Lemma}; furthermore an explicit integral point is picked up on the $t$-module associated to each of the $t$-motives in this coproduct family. This set up then enables us to prove Theorem~\ref{Thm: Introduction}.

Finally, we define $v$-adic MZV's in Section~\ref{Sec: v-adic MZV} and prove~Theorem~\ref{T: v-adic MZV Intro}. We then use these logarithmic interpretations for $\infty$-adic and $v$-adic MZV's (Theorems~\ref{Thm: Introduction} and \ref{T: LogInt for v-adic MZV}) as well as Yu's sub-$t$-module theorem~\cite{Yu97} to prove Theorem~\ref{Thm: Well-defined map in Introduction}. At the end we list three natural and interesting questions in  Remark~\ref{Rem: future project}, which we will  investigate in a future project.

\subsection*{Acknowledgements}
The first author thanks D.~Zagier for helpful discussions that highly motived the formulation of Theorem~\ref{Thm: Well-defined map in Introduction}.  We are grateful to F.~Brown, D.~Brownawell, H.~Furusho, M.~Papanikolas, D.~Thakur, S.~Yasuda and  J.~Yu   for many helpful comments. We particularly thank the referee for careful reading and for many helpful suggestions and comments. The first author thanks Max Planck Institute for Mathematics for financial support and its hospitality. The project was initiated when the second author visited NCTS and we would like to thank NCTS for their kind support. This project was partially supported by JSPS Bilateral Open Partnership Joint Research Projects.

\section{Fiber coproduct of Anderson dual $t$-motives}\label{Sec: Fiber coproducts}
Throughout this paper, we will call  {\it{Anderson dual $t$-motives}} for those called {\it{dual $t$-motives}} in \cite{ABP04} and called {\it{Anderson $t$-motives}} in \cite{P08}.
\subsection{Anderson dual $t$-motives}  Let $\laurent{\CC_{\infty}}{t} $ be the field of Laurent series in the variable $t$ over $\CC_{\infty}$. For an integer $i$,  we define the $i$th fold twisting automorphism on $\laurent{\CC_{\infty}}{t}$ given by $f\mapsto f^{(i)}$, where 
$f^{(i)}:=\sum a_{j}^{q^{i}}t^{j} \hbox{ for }f=\sum a_{j} t^{j}\in \laurent{\CC_{\infty}}{t}.$ We extend the $i$th fold twisting to an operator on matrices with entries in $\laurent{\CC_{\infty}}{t}$ by entry-wise action.

We define the twisted polynomial ring $\ok[t,\sigma]$ generated by the two variables $t$ and $\sigma$ subject to the relations 
\[ \sigma f=f^{(-1)}\sigma \hbox{ for }f\in \ok[t]. \]

\begin{definition} An Anderson dual $t$-motive is a left $\ok[t,\sigma]$-module $\cM$ satisfying that
\begin{enumerate}
\item $\cM$ is a free left $\ok[t]$-module of finite rank.
\item $\cM$ is a free left $\ok[\sigma]$-module of finite rank.
\item $(t-\theta)^{s} \cM \subset \sigma \cM$ for all sufficiently large integers $s$.
\end{enumerate} 
\end{definition}
We note that the above notion of Anderson dual $t$-motives can be defined over any perfect field $L$ containing $k$, but for our purpose from the point of view of transcendence theory the field $\ok$ is the most suitable.
For an Anderson dual $t$-motive $\cM$ of rank $r$ over $\ok[t]$ and of rank $d$ over $\ok[\sigma]$, we call the vector $\bx=(x_{1},\ldots,x_{r})\in \Mat_{1\times r}(\cM)$ (resp. $\nu=(\nu_{1},\ldots,\nu_{d})\in \Mat_{1\times d}(\cM)$) a $\ok[t]$-basis (resp. a $\ok[\sigma]$-basis) for $\cM$ if $x_{1},\ldots,x_{r}$ (resp. $\nu_{1},\ldots,\nu_{d}$) form a $\ok[t]$-basis (resp. $\ok[\sigma]$-basis) of $\cM$. Fixing a $\ok[t]$-basis $\bx$ for $\cM$, then there exists a unique matrix $\Phi\in \Mat_{r}(\ok[t])\cap \GL_{r}(\ok(t))$ satisfying that
\[ \sigma \bx^{\tr}=\Phi \bx^{\tr} ,\]
where $\sigma \bx^{\tr}$ is defined via entry-wise action. We say that the matrix $\Phi$ represents multiplication by $\sigma$ on $\cM$ with respect to $\bx$ (cf.~\cite[Sec.~3.2.3]{P08}).

A typical example is the $n$th tensor power of the Carlitz $t$-motive denoted by $C^{\otimes n}$ for a positive integer $n$. The underlying module of $C^{\otimes n}$ is $\ok[t]$, on which $\sigma$ acts by
\[\sigma f:=(t-\theta)^{n} f^{(-1)}\hbox{ for }f\in \ok[t]. \]
It is not hard to check that $C^{\otimes n}$ is an Anderson dual $t$-motive with a $\ok[\sigma]$-basis given by
\[ \left( (t-\theta )^{n-1},\ldots,(t-\theta),1  \right). \]
As a left $\FF_{q}[t]$-module the quotient $C^{\otimes n}/(\sigma-1)C^{\otimes n}$ gives the $\ok$-valued points of the $n$th tensor power $\bC^{\otimes n}$ of the Carlitz module defined in (\ref{E: Def of C otimes s}). This fact was known by Anderson, and the reader can consult \cite{T04} and \cite[Sec.~5.2]{CPY14}.

\subsection{Anderson $t$-modules}\label{sec:Anderson t-modules}
We quickly review the theory of $t$-modules developed by Anderson in~\cite{A86}. For any field extension $L/k$, we let $\tau:L\rightarrow L$ be the Frobenius $q$th power operator, and one naturally extends it to an operator on $L^{s}$ by entry-wise action. Let $L[\tau]$ be the twisted polynomial ring generated by $\tau$ over $L$ subject to the relation:
\[ \tau \alpha=\alpha^{q} \tau \hbox{ for }\alpha\in L. \]
Given a $d$-dimensional additive algebraic group $ {\GG_{a}^{d}}_{/L}$ over $L$, we denote by  $\End_{\FF_{q}}\left( {\GG_{a}^{d}}_{/L} \right) $ the ring of endomorphisms of ${\GG_{a}^{d}}_{/L}$ that are $\FF_{q}$-linear and defined over $L$, and we naturally identify  $\End_{\FF_{q}}\left( {\GG_{a}^{d}}_{/L} \right) $ with the matrix ring $\Mat_{d}(L[\tau])$.

A $d$-dimensional $t$-module defined over $L$ is a pair $G=({\GG_{a}^{d}}_{/L}, \rho)$, where ${\GG_{a}^{d}}_{/L}$ is the $d$-dimensional additive group $\GG_{a}^{d}$ that is defined over $L$ and $\rho$ is an $\FF_{q}$-linear ring homomorphism 
\[ \rho:\FF_{q}[t] \rightarrow \End_{\FF_{q}}\left( {\GG_{a}^{d}}_{/L} \right)   \]
so that $\partial \rho_{t}-\theta I_{d}$ is a nilpotent matrix, where   $\partial\rho_{t}$  is defined to be the induced morphism of $\rho_{t}$ at the identity on the Lie algebra $\Lie{\GG_{a}^{d}}_{/L}$ of ${\GG_{a}^{d}}_{/L}$.  
 For a nonzero polynomial $a\in \FF_{q}[t]$,  we write 
$\rho_{a}=\sum_{i=0}^{m}a_{i}\tau^{i}$ with $a_{i}\in \Mat_{d}(L)$, where we understand that the symbols $a_i$ and $m$ depend on $a$. Then the differential of $\rho_{a}$ at the identity is explicitly expressed as
\begin{equation}\label{E:partial rho a}
\partial \rho_{a}=a_{0}.  
\end{equation}
Note that $G(F)=\GG_{a}^{d}(F)$ has a left $\FF_{q}[t]$-module structure via the map $\rho$ for any field extension $F/L$. 

Given such  a $d$-dimensional $t$-module $G$ over $L$, Anderson~\cite{A86} showed the existence of a $d$-variable power series $\exp_{G}$ with coefficients in $L$ for which
\begin{enumerate}
\item[(a)] $\exp_{G}(\bz)\equiv \bz \hbox{ } (\hbox{mod deg }q)$;
\item[(b)] for any $a\in \FF_{q}[t]$, the following identity holds:
 \begin{equation}\label{E:FEexp}
 \rho_{a}\circ \exp_{G}=\exp_{G}\circ \partial \rho_{a}.
 \end{equation}
\end{enumerate}
We mention that when we work over the field $\CC_{\infty}$,  $\exp_{G}:\Lie G(\CC_{\infty})\rightarrow G(\CC_{\infty})$ is entire. Such as the classical terminology for Lie groups, we call $\exp_{G}$ {\it{the exponential map}} of the $t$-module $G$.  The formal inverse of the power series $\exp_{G}$ is called the {\it{logarithm}} of $G$ denoted by $\log_{G}$ and it satisfies: 

\begin{equation}\label{E:IdentityExpLog}
\exp_G \circ \log_{G}(\bz)=\bz=\log_{G}\circ \exp_{G}(\bz) {\hbox{ (as power series identities)}}. 
\end{equation}
\begin{equation}\label{E:FuncE for log G}
\log_{G}\circ \rho_{a}=\partial \rho_{a}\circ \log_{G}  \hbox{ for every } a\in \FF_{q}[t].
\end{equation}
Note that $\log_{G}$ is the power series expansion around the origin of the multi-valued inverse map to $\exp_{G}$.

In fact, the exponential map $\exp_{G}$ is functorial in $G$ in the following sense. Let $G$ and $G'$ be two $t$-modules defined over $L$. By a morphism from $G$ to $G'$ over $L$, we mean a morphism as algebraic groups $\phi:G\rightarrow G'$ that is defined over $L$ and that commutes with $\FF_{q}[t]$-actions. The functoriality property~\cite[p.~473]{A86} means that we have the following functional equation:
\begin{equation}\label{E:ExpFunctorial}
\phi \circ \exp_{G}=\exp_{G'}\circ \partial \phi,
 \end{equation}
 where $\partial \phi$ is  the differential of the morphism $\phi$ at the identity. The functional equation for exponential maps and (\ref{E:IdentityExpLog}) imply the following functional equation for logarithms:
 \begin{equation}\label{E:logFunctorial}
 \log_{G'}\circ \phi=\partial \phi \circ \log_{G}. 
 \end{equation}

An example of a $t$-module is the $s$th tensor power of the Calitz module denoted by $\bC^{\otimes s}=({\GG_{a}^{s}}_{/k},[-]_{s})$ for any positive integer $s$. The underlying space of $\bC^{\otimes s}$ is ${\GG_{a}^{s}}_{/k}$ equipped with the $\FF_{q}[t]$-module structure given (and so uniquely determined) by 

\begin{equation}\label{E: Def of C otimes s}
 [t]_{s}=  \left(
\begin{array}{ccccc}
\theta& 1 & 0 & \cdots & 0 \\
& \theta & 1 & \ddots & \vdots \\
& & \ddots & \ddots & 0 \\
& & & \ddots & 1 \\
\tau& & & & \theta
\end{array}
\right)
\in \Mat_{s}( k[\tau] ).
\end{equation}

We call a $t$-module $G$ over $\ok$ {\it{uniformizable}} if its exponential map $\exp_{G}: \Lie G (\CC_{\infty})\rightarrow G(\CC_{\infty})$ is surjective. We mention that there are examples of $t$-modules which are not uniformizable, see \cite[Sec.~2.2]{A86}. Note that $\bC^{\otimes s}$ is uniformizable for each $s\in \NN$, see~\cite[Cor.~5.9.38]{Go96}.

\subsection{From Anderson dual $t$-motives to $t$-modules} Here we review how one constructs a $t$-module from an Anderson dual $t$-motive following Anderson's approach (see \cite[Sec.~5.2]{CPY14}, \cite[Sec.~4.4]{BP16} and \cite[Sec.~5.2]{HJ16}). Let $\cM$ be an Anderson dual $t$-motive with a $\ok[t]$-basis  $\left(x_{1},\ldots,x_{r}\right)$, and a $\ok[\sigma]$-basis $\left(\nu_{1},\ldots,\nu_{d} \right)$.  For any $y\in \cM$, we express $y=\sum_{i=1}^{d}g_{i}\nu_{i}$ with $g_{i}\in \ok[\sigma]$ and then define the map $\Delta:\cM\rightarrow \Mat_{d\times 1}(\ok)$ by
\begin{equation}\label{E:Def of Delta}
\Delta(y) :=(\delta(g_{1}),\ldots,\delta(g_{d}))^{\tr}\in \Mat_{d\times 1}(\ok), 
\end{equation}
where for $g=\sum_{j}a_{j}\sigma^{j} (=\sum_{j}\sigma^{j}a_{j}^{q^{j}} ) \in\ok[\sigma]$, $\delta:\ok[\sigma]\rightarrow \ok$ is defined by \[\delta(g):=\sum_{j} a_{j}^{q^{j}}.\] It is clear that $\Delta$ is $\FF_{q}$-linear and surjective. One further checks that $\Ker \Delta=(\sigma-1)\cM$, and therefore we have the induced isomorphism
\[\Delta: \cM/(\sigma-1)\cM \cong \Mat_{d\times 1}(\ok). \] As $\FF_{q}[t]$ is contained in the center of $\ok[t,\sigma]$,  $\cM/(\sigma-1)\cM$ has a left $\FF_{q}[t]$-module structure, which allows us to induce an $\FF_{q}[t]$-module structure on $\Mat_{d\times 1}(\ok)$  from the isomorphism above. One thereby has a unique $\FF_{q}$-linear ring homomorphism
\[\rho:\FF_{q}[t]\rightarrow \Mat_{d}(\ok[\tau]), \]
whence defining a $t$-module $G=({\GG_{a}^{d}}_{/ \ok},\rho)$ associated to the Anderson dual $t$-motive $\cM$ since the group of $\ok$-valued points is Zariski dense in ${\GG_{a}^{d}}_{/ \ok}$. 
\subsection{The fiber coproduct}\label{Subsec: Fiber Coproduct}

In this section, we will construct a fiber coproduct of certain Anderson dual $t$-motives, which will play the key role when proving Theorem~\ref{Thm: Introduction}. Here, we deal with the situation as general as possible, and expect it to have wide applications for the related issues.

\subsubsection{The set up}\label{sub: set up} Let $\cN$ be an Anderson dual $t$-motive of rank $r$ over $\ok[t]$, and we fix a $\ok[t]$-basis $\bx=(x_{1},\ldots,x_{r})\in \Mat_{1\times r}(\cN)$  as well as  a $\ok[\sigma]$-basis $\alpha=(\alpha_{1},\ldots,\alpha_{n})\in \Mat_{1\times n}(\cN)$ for $\cN$. Let $B:=B_{\cN}\in \Mat_{r}(\ok[t])\cap \GL_{r}(\ok(t))$ be the matrix presenting multiplication by $\sigma$ on $\cN$ with respect to $\bx$, ie.,
\[\sigma \bx^{\tr}=B \bx^{\tr}  .\]

Suppose that $\left\{ \cM_{\ell}' \right\}_{\ell=1}^{T}$ is a family of Anderson dual $t$-motives  equipped with the property that $\cM_{\ell}'$ contains $\cN$ as $\ok[t,\sigma]$-submodule for which either 

\begin{equation}\label{E: case1}
\cM_{\ell}' = \cN
\end{equation}
or 
\begin{equation}\label{E: case2}
\cM_{\ell}' \hbox{ fits into the short exact sequence of left }\ok[t,\sigma]\hbox{-modules}
\end{equation}

\[0 \rightarrow \cN \rightarrow \cM_{\ell}' \rightarrow \cM_{\ell}'' \rightarrow 0, \]
where $\cM_{\ell}''$ is an Anderson dual $t$-motive of rank $m_{\ell} \geq 1$ over $\ok[t]$.
We let $\cT=\left\{ 1,\ldots,T\right\}$ and decompose it as the disjoint union
\[\cT=\cT_{1} \cup \cT_{2},\]
where $\cT_{1}$ consists of those indexes $\ell$ for which $\cM_{\ell}'$ satisfies (\ref{E: case1}) and $\cT_{2}$ consists of those indexes $\ell$ for which $\cM_{\ell}'$ satisfies (\ref{E: case2}). We let $s:=|\cT_{1}|$, and for convenience we rearrange the indexes so that  
\[\cT_{1}=\left\{1,\ldots,s \right\}\hbox{ and }\cT_{2}=\left\{ s+1,\ldots,T\right\}. \]
It is allowed to be the case that $s=0$, ie., $\cT_{1}=\emptyset$ and $\cT_{2}=\cT$, or the case that $s=T$, ie., $\cT_{1}=\cT$ and $\cT_{2}=\emptyset$. In the latter case when $s=T$, it means that $\cM_{\ell}'$ is isomorphic to $\cN$ for all $\ell$. In the former case when $s=0$, every $\cM_{\ell}'$ is an extension of $\cM_{\ell}''$ by $\cN$ in (\ref{E: case2}).

For convenience  we put $m_{\ell}=0$ for $1\leq \ell \leq s$.
For each $1\leq \ell \leq T$, we denote by $\bx_{\ell}=(x_{\ell 1},\ldots, x_{\ell r})\in \Mat_{1\times r}(\cM_{\ell}')$ the image of the $\ok[t]$-basis $\bx = (x_{1}, \dots, x_{r})$ for $\cN$ under the map $\cN \hookrightarrow \cM_{\ell}'$.
Since $\cM_{\ell}''$ is free of rank $m_{\ell}$ over $\ok[t]$, there exist vectors $\by_{\ell}=(y_{\ell 1},\ldots, y_{\ell m_{\ell}})\in \Mat_{1\times m_{\ell}}(\cM_{\ell}')$, where $\by_{\ell}=\emptyset $ for $1\leq \ell \leq s$,
 so that $\left(\bx_{\ell},\by_{\ell} \right)$ is a $\ok[t]$-basis for $\cM_{\ell}'$.
For the $\ok[t]$-basis $(\bx_{\ell}, \by_{\ell})$, the action of $\sigma$ is given by the form
\[
\sigma \begin{pmatrix}
\bx_{\ell}^{\tr}\\
\by_{\ell}^{\tr}
\end{pmatrix} = 
\begin{pmatrix}
B & 0\\
D_{\ell} & \Phi_{\ell}''
\end{pmatrix}
\begin{pmatrix}
\bx_{\ell}^{\tr}\\
\by_{\ell}^{\tr}
\end{pmatrix}.
\] Here $\Phi_{\ell}''\in \Mat_{m_{\ell}}(\ok[t])\cap \GL_{m_{\ell}}(\ok(t))$ is the matrix representing multiplication by $\sigma$ on $\cM_{\ell}''$ with respect to the $\ok[t]$-basis as the image of $\by_{\ell}$ in $\cM_{\ell}''$. 

 For each $1\leq \ell \leq T$, we denote by $\widetilde{\alpha}_{\ell}:=(\alpha_{\ell 1},\ldots, \alpha_{\ell n})\in \Mat_{1\times n}(\cM_{\ell}')$ the image of the $\ok[\sigma]$-basis $\alpha=(\alpha_{1},\ldots,\alpha_{n})$ for $\cN$ under the map $\cN \hookrightarrow \cM_{\ell}'$. We understand that for $1\leq \ell \leq s$, $\widetilde{\alpha}_{\ell}$ is a $\ok[\sigma]$-basis for $\cM_{\ell}'$ since $\cN \cong \cM_{\ell}'$, and since $\cM_{\ell}''$ is free of finite rank over $\ok[\sigma]$ for $s+1\leq \ell \leq T$, $\widetilde{\alpha}_{\ell}$ can be extended to a $\ok[\sigma]$-basis $(\widetilde{\alpha}_{\ell},\beta_{\ell})$ for $\cM_{\ell}'$ for some $\beta_{\ell}\in \Mat_{1\times h_{\ell}}(\cM_{\ell}')$ with $h_{\ell} := \rank_{\ok[\sigma]} \cM_{\ell}''$. Note that the image of $\beta_{\ell}$ under the quotient map $\cM_{\ell}' \twoheadrightarrow \cM_{\ell}''$ forms a $\ok[\sigma]$-basis for $\cM_{\ell}''$. By convenience, for $1\leq \ell \leq s$ we put $h_{\ell}=0$ and $\beta_{\ell}=\emptyset$.

We note that to prove Theorem~\ref{Thm: Introduction}, we will take $\cN$ to be the $n$th tensor power of the Carlitz $t$-motive and take $\left\{ \cM_{\ell}' \right\}$ to be the Anderson dual $t$-motives constructed in~\cite{C14, CPY14}, whose periods involve Carlitz multiple polylogarithms at specific integral points. Related details are given in Sec.~\ref{Sec: proof of main thm}. 

\subsubsection{The definition}\label{sub:Def fiber coproduct} We continue with the notation and set up as above. We define $\cM$ to be the fiber coproduct of all $\cM_{\ell}'$ over $\cN$ denoted by \[\cM:=\cM_{1}'\sqcup_{\cN}\cM_{2}' \sqcup_{\cN} \cdots \sqcup_{\cN} \cM_{T}'.\] More precisely, as a left $\ok[t]$-module, $\cM$ is defined by the quotient:
\begin{equation}\label{E:QuotM}
 \cM:= \left( \oplus_{\ell=1}^{T}\cM_{\ell}' \right) \big/ \left( \Span_{\ok[t]} \left\{x_{\ell i}-x_{\ell' i}| 1\leq \ell , \ell' \leq T, \hbox{ }1\leq i \leq r \right\} \right) .
\end{equation}
Without confusion, we denote by $x_i$  the image of $x_{\ell i}$ in the quotient module $\cM$ for any $\ell$, and $1\leq i \leq r$. This is well defined from the description of $\cM$ above, and it makes sense to use the notation as one has the natural embedding $\cN\hookrightarrow \cM$. We still denote by $y_{\ell j}$ the image of $y_{\ell j}$ in the quotient module $\cM$ for $s+1\leq \ell \leq T$, and $1\leq j\leq m_{\ell}$, as it is well-defined due to (\ref{E:QuotM}).  Under such notation, it is clear to see that $\cM$ is a free $\ok[t]$-module and 
\begin{equation}\label{E:bm}
\bm:= \left( \bx, \by_{s+1},\ldots,\by_{T} \right) 
\end{equation}
is a $\ok[t]$-basis for $\cM$.

\begin{proposition}\label{Prop: t-motive M} The left $\ok[t]$-module $\cM$ defined above is an Anderson dual $t$-motive.
\end{proposition}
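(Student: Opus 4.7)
The plan is to construct a short exact sequence of left $\ok[t,\sigma]$-modules
$$0 \to \cN \xrightarrow{\iota} \cM \xrightarrow{\pi} \bigoplus_{\ell=s+1}^{T} \cM_{\ell}'' \to 0,$$
and then deduce the three defining properties of an Anderson dual $t$-motive for $\cM$ from the corresponding properties of $\cN$ and each $\cM_\ell''$ via a standard closure-under-extensions argument. As a preliminary, I would check that the $\ok[t]$-submodule $K := \Span_{\ok[t]}\{x_{\ell i} - x_{\ell' i}\} \subset \oplus_\ell \cM_\ell'$ is $\sigma$-stable: since each $\bx_\ell$ is acted on by the common matrix $B$, one has $\sigma(x_{\ell i} - x_{\ell' i}) = \sum_j B_{ji}(x_{\ell j} - x_{\ell' j}) \in K$, so $\cM$ inherits a left $\ok[t,\sigma]$-module structure. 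Define $\iota$ as any of the compositions $\cN \hookrightarrow \cM_\ell' \twoheadrightarrow \cM$; these agree because expanding $\alpha_j = \sum_i c_i^{(j)} x_i$ in $\cN$ gives $\widetilde{\alpha}_{\ell j} - \widetilde{\alpha}_{\ell' j} = \sum_i c_i^{(j)}(x_{\ell i} - x_{\ell' i}) \in K$. Define $\pi$ on the $\ok[t]$-basis $\bm$ by $x_i \mapsto 0$ and $y_{\ell k} \mapsto $ (image of $y_{\ell k}$ in $\cM_\ell''$); its $\sigma$-equivariance is immediate from the block lower-triangular shape $\bigl(\begin{smallmatrix} B & 0 \\ D_\ell & \Phi_\ell'' \end{smallmatrix}\bigr)$ of the $\sigma$-matrix on $(\bx_\ell, \by_\ell)$.

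Exactness would be verified directly: $\pi$ is surjective on the basis; $\pi\iota = 0$ since $\iota(\alpha_j) = \sum_i c_i^{(j)} x_i$ lies in $\ker \pi$; $\ker\pi = \Span_{\ok[t]}\{x_1,\ldots,x_r\} = \im\iota$ by the shape of $\bm$; and injectivity of $\iota$ is forced by the $\ok[t]$-linear independence of the $x_i$'s inside $\bm$. With the sequence in hand, the three axioms for $\cM$ follow. Freeness over $\ok[t]$ with basis $\bm$ is already explicit from the construction. For freeness over $\ok[\sigma]$, I would lift the $\ok[\sigma]$-bases of the quotients $\cM_\ell''$ to the elements $\beta_\ell \in \cM$ and combine them with $\iota(\alpha)$ to obtain a $\ok[\sigma]$-basis $(\alpha, \beta_{s+1}, \ldots, \beta_T)$ of $\cM$ by the usual splitting argument for an extension of free $\ok[\sigma]$-modules. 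For the condition $(t-\theta)^s \cM \subset \sigma\cM$, choose $s_1, s_2$ with $(t-\theta)^{s_1}\cN \subset \sigma\cN$ and $(t-\theta)^{s_2}\cM_\ell'' \subset \sigma\cM_\ell''$ for all $\ell > s$; then for any $m \in \cM$, write $(t-\theta)^{s_2} m \equiv \sigma m' \pmod{\iota(\cN)}$, and absorb the residual $\cN$-part by applying another factor $(t-\theta)^{s_1}$ to land $(t-\theta)^{s_1+s_2} m$ inside $\sigma \cM$.

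The main technical obstacle is a careful bookkeeping point: one must verify that the relations imposed by $K$ identify the $\widetilde{\alpha}_{\ell j}$'s across different $\ell$ in precisely the right way so that $\iota$ is simultaneously well-defined, injective, and compatible with the descent of the $\ok[\sigma]$-structure to $\cM$. This is resolved by the single observation that $\alpha_j$ has the same $\ok[t]$-expansion coefficients $c_i^{(j)}$ in $\bx$ inside every $\cM_\ell'$ (because the $x_{\ell i}$ are images of a common $\ok[t]$-basis of $\cN$), so its images $\widetilde{\alpha}_{\ell j}$ differ only by elements of $K$. Everything else is routine: the rank counts match (with $\cM$ of $\ok[t]$-rank $r + \sum_{\ell > s} m_\ell$ and $\ok[\sigma]$-rank $n + \sum_{\ell > s} h_\ell$), and the extension-closure of the three Anderson conditions is by now a familiar argument in this category.
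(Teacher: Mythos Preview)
Your proof is correct and follows essentially the same strategy as the paper: both establish that the quotienting submodule is $\sigma$-stable via the common matrix $B$, and both use the short exact sequence $0 \to \cN \to \cM \to \bigoplus_{\ell>s} \cM_\ell'' \to 0$ as the structural backbone. The one substantive difference is in deducing freeness over $\ok[\sigma]$: you argue directly that the sequence splits (since the quotient is free, hence projective, as a left $\ok[\sigma]$-module), yielding the explicit basis $(\alpha,\beta_{s+1},\ldots,\beta_T)$; the paper instead shows $\cM$ is finitely generated over $\ok[\sigma]$ and then invokes \cite[Prop.~4.3.4]{ABP04}, which identifies the $\ok[t]$-torsion and $\ok[\sigma]$-torsion submodules, to conclude freeness from the already-known $\ok[t]$-freeness. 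Your route is more self-contained and has the bonus of producing the explicit $\ok[\sigma]$-basis that the paper needs later anyway (and establishes separately around (\ref{E: Span Equality})); the paper's route is a useful reminder that torsion-freeness suffices once finite generation is known.
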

\begin{proof}
We first claim that the $\ok[t]$-submodule
$\Span_{\ok[t]} \left\{x_{\ell i}-x_{\ell' i}| 1\leq \ell , \ell' \leq T, \hbox{ }1\leq i \leq r \right\} $
is stable under the $\sigma$-action, whence a left $\ok[t,\sigma]$-submodule of $\oplus_{\ell =1}^{T}\cM_{\ell}'$. To show this, we note that $x_{\ell i}-x_{\ell' i}$ is the $i$th component of $(\bx_{\ell}-\bx_{\ell'})^{\tr}$. By  definition, $\sigma (\bx_{\ell}-\bx_{\ell'})^{\tr}$ is the vector
\[B\cdot (\bx_{\ell}-\bx_{\ell'})^{\tr} .\]
By expanding the above vector we see that 
\[ \sigma(x_{\ell i}-x_{\ell' i})\in\Span_{\ok[t]} \left\{x_{\ell i}-x_{\ell' i}| 1\leq \ell , \ell' \leq T, \hbox{ }1\leq i \leq r \right\} .\]

To show that $\cM$ is free of finite rank over $\ok[\sigma]$, we first note that the following matrix
\begin{equation}\label{E: Phi}
\Phi:= \begin{pmatrix}
B& & & \\
D_{s+1}& \Phi_{s+1}''& & \\
\vdots& & \ddots &\\
D_{T}&&& \Phi_{T}''
\end{pmatrix} 
\end{equation} is the matrix representing the action of $\sigma$ on $\cM$ with respect to the $\ok[t]$-basis $\bm$ given in (\ref{E:bm}). It follows that we have the following short exact sequence of Anderson dual $t$-motives:
\[0 \rightarrow \cN \rightarrow \cM \rightarrow \oplus_{\ell=s+1}^{T} \cM_{\ell}''\rightarrow 0 .\]
By hypothesis, each $\cM_{\ell}''$ is an Anderson  dual $t$-motive, so is $\oplus_{\ell=s+1}^{T} \cM_{\ell}''$. Since $\cN$ and $\oplus_{\ell=s+1}^{T} \cM_{\ell}''$ are Anderson dual $t$-motives, $\cM$ is a finitely generated $\ok[\sigma]$-module. By \cite[Prop.~4.3.4]{ABP04}, we know that $\ok[t]$-torsion submodule of $\cM$ is as same as the $\ok[\sigma]$-torsion submodule of $\cM$, and hence $\cM$ is free over $\ok[\sigma]$ since $\cM$ is a free left $\ok[t]$-module. 

Finally, one directly checks that $(t-\theta)^{i}\cM \subset \sigma \cM$ for sufficiently large integers $i$, whence $\cM$ is an Anderson dual $t$-motive. 
\end{proof}

\begin{remark}
Note that we can write down the ranks of $\cM$ over $\ok[t]$ and over $\ok[\sigma]$ respectively. Precisely, we have
\[\rank_{\ok[t]} \cM =\rank_{\ok[t]}\cN +\sum_{\ell=s+1}^{T}\left(  \rank_{\ok[t]}\cM_{\ell}' -\rank_{\ok[t]}\cN \right)  \]
and 
\[ \rank_{\ok[\sigma]} \cM =\rank_{\ok[\sigma]}\cN +\sum_{\ell=s+1}^{T}\left(  \rank_{\ok[\sigma]}\cM_{\ell}' -\rank_{\ok[\sigma]}\cN \right) .\] 
\end{remark}

\subsection{Rigid analytic trivialization} Let $\TT\subset \laurent{\CC_{\infty}}{t}$ be the subring consisting of power series that are convergent on the closed unit disk centered at the zero of $\CC_{\infty}$. More precisely, every element $f$ in $\TT$ is of the form $f=\sum_{i=0}^{\infty} a_{i}t^{i}$ with the property that $|a_{i}|_{\infty}\rightarrow 0$ as $i \rightarrow \infty$. We follow~\cite{ABP04, P08} to introduce the following terminology (cf.~\cite{A86}).

\begin{definition}
Let $M$ be an Anderson dual $t$-motive of rank $r$ over $\ok[t]$. Let $\Phi\in \Mat_{r}(\ok[t])\cap \GL_{r}(\ok(t))$ be the matrix representing multiplication by $\sigma$ on certain $\ok[t]$-basis for $M$.  We say that $M$ is rigid analytically trivial if there exists a matrix $\Psi\in \GL_{r}(\TT)$ so that 
\[\Psi^{(-1)}=\Phi \Psi. \] Such a $\Psi$ is called a rigid analytic trivialization of $\Phi$.
\end{definition}

\begin{remark}\label{Rem: R.A.T}
If an Anderson dual $t$-motive is rigid analytically trivial, then its associated $t$-module is uniformizable. See~\cite[Sec.~4.5]{BP16}  and \cite[Thm.~5.2.8]{HJ16}.
\end{remark}

\begin{proposition}\label{Prop: R.A.T}
Let $\cN$, $\left\{ \cM_{\ell}' \right\}_{\ell=1}^{T}$ be the Anderson dual $t$-motives given in Sec.~\ref{sub: set up} and suppose that all of them are rigid analytically trivial. Then so is the fiber coproduct $\cM$ of $\left\{ \cM_{\ell}'\right\}_{\ell=1}^{T}$ over $\cN$. 
\end{proposition}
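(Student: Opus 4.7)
The plan is to build a rigid analytic trivialization $\Psi$ of $\Phi$ that mirrors the block lower triangular shape displayed in~(\ref{E: Phi}). Fix any trivialization $\Psi_B \in \GL_r(\TT)$ of $B$ (available by hypothesis), and seek $\Psi$ of the form
$$
\Psi = \begin{pmatrix}
\Psi_B & & & \\
W_{s+1} & \Psi_{s+1}'' & & \\
\vdots & & \ddots & \\
W_T & & & \Psi_T''
\end{pmatrix},
$$
where for each $\ell \in \cT_2$ the block $\Psi_\ell'' \in \GL_{m_\ell}(\TT)$ satisfies $(\Psi_\ell'')^{(-1)} = \Phi_\ell'' \Psi_\ell''$ and $W_\ell \in \Mat_{m_\ell \times r}(\TT)$ satisfies the coupling relation
$$
W_\ell^{(-1)} = D_\ell \Psi_B + \Phi_\ell'' W_\ell.
$$
Granted such blocks, a direct block multiplication verifies $\Psi^{(-1)} = \Phi\Psi$, and the triangular shape yields $\det \Psi = \det \Psi_B \cdot \prod_{\ell \in \cT_2} \det \Psi_\ell'' \in \GL_1(\TT)$, so $\Psi \in \GL(\TT)$.

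The real content is producing $\Psi_\ell''$ and $W_\ell$ for $\ell \in \cT_2$. My tool will be the \emph{solution space} attached to a matrix $\Theta \in \Mat_N(\ok(t)) \cap \GL_N(\ok(t))$,
$$
V(\Theta) := \{\, v \in \Mat_{N \times 1}(\TT) \mid v^{(-1)} = \Theta v \,\},
$$
an $\FF_q(t)$-vector space. The foundational fact (see \cite{ABP04,P08}) is that $\dim_{\FF_q(t)} V(\Theta) \leq N$, with equality if and only if the underlying Anderson dual $t$-motive is rigid analytically trivial; in the equality case, the columns of any trivialization furnish an $\FF_q(t)$-basis of $V(\Theta)$. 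Under our hypotheses, $\dim V(B) = r$ and $\dim V(\Phi_\ell') = r + m_\ell$ for every $\ell \in \cT_2$, while unconditionally $\dim V(\Phi_\ell'') \leq m_\ell$.

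The block shape of $\Phi_\ell' = \left(\begin{smallmatrix} B & 0 \\ D_\ell & \Phi_\ell'' \end{smallmatrix}\right)$ induces a natural left exact sequence of $\FF_q(t)$-vector spaces
$$
0 \longrightarrow V(\Phi_\ell'') \xrightarrow{\,\iota_\ell\,} V(\Phi_\ell') \xrightarrow{\,\pi_\ell\,} V(B),
$$
where $\iota_\ell(v_2) = \begin{pmatrix} 0 \\ v_2 \end{pmatrix}$ and $\pi_\ell$ projects onto the top $r$ coordinates; exactness is immediate upon unpacking $v^{(-1)} = \Phi_\ell' v$ block by block. Combining with the dimension inputs gives
$$
r + m_\ell = \dim V(\Phi_\ell') \leq \dim V(\Phi_\ell'') + \dim V(B) \leq m_\ell + r,
$$
forcing equality throughout. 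This simultaneously shows that $\cM_\ell''$ is rigid analytically trivial (so any trivialization provides the desired $\Psi_\ell''$) and that $\pi_\ell$ is surjective. Lifting each of the $r$ columns of $\Psi_B$ through $\pi_\ell$ to an element of $V(\Phi_\ell')$ and collecting the lower blocks produces the required $W_\ell$.

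The subtle input is the upper bound $\dim_{\FF_q(t)} V(\Theta) \leq \rank \Theta$ together with its equality criterion; this is what translates the hypothesis that $\cM_\ell'$ is rigid analytically trivial into the concrete surjectivity of $\pi_\ell$, from which the existence of the coupling matrix $W_\ell$ follows for free. Once this dimension-counting framework is invoked, the remainder is purely formal: choose $\Psi_B$, apply the exact sequence argument to each $\ell \in \cT_2$ to harvest $\Psi_\ell''$ and $W_\ell$, assemble $\Psi$ in the block form above, and confirm $\Psi^{(-1)} = \Phi \Psi$ along with $\Psi \in \GL(\TT)$ by the determinant computation.
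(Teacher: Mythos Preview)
Your proof is correct and follows the same overall strategy as the paper: assemble a block lower triangular $\Psi$ matching the shape of $\Phi$ in~(\ref{E: Phi}) and verify directly that $\Psi^{(-1)}=\Phi\Psi$. The paper simply asserts that each $\cM_\ell'$ admits a trivialization of the form $\left(\begin{smallmatrix} Q & 0\\ R_\ell & \Psi_\ell''\end{smallmatrix}\right)$ with a \emph{common} top block $Q$, whereas you supply an explicit justification for this via the exact sequence of solution spaces and the dimension bound $\dim_{\FF_q(t)} V(\Theta)\le \rank\Theta$; your argument is thus a more detailed version of the same proof rather than a different one.
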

\begin{proof}
We continue with the above notation that $B$ is the matrix representing multiplication by $\sigma$ on $\bx$ for $\cN$, and for each $s+1\leq \ell \leq T$,
 \[
 \begin{pmatrix}
B& \\
D_{\ell}& \Phi_{\ell}''\\
\end{pmatrix} 
\] is the matrix representing multiplication by $\sigma$ on $(\bx_{\ell},\by_{\ell})$ for $\cM_{\ell}'$. 

Since $\cN$ and $\cM_{\ell}'$ are rigid analytically trivial, there exist rigid analytic trivializations $Q$ and 
\[\begin{pmatrix}
Q&\\
R_{\ell}& \Psi_{\ell}''\\
\end{pmatrix} \]
for which $Q^{(-1)}=B Q$ and 
\[\begin{pmatrix}
Q& \\
R_{\ell}& \Psi_{\ell}''\\
\end{pmatrix} ^{(-1)}= \begin{pmatrix}
B& \\
D_{\ell}& \Phi_{\ell}''\\
\end{pmatrix}  \begin{pmatrix}
Q& \\
R_{\ell}& \Psi_{\ell}''\\
\end{pmatrix} .  \]
Since $\Phi$ given in (\ref{E: Phi}) is the matrix representing multiplication by $\sigma$ on $\bm$ for $\cM$, we put
\[ \Psi:= \begin{pmatrix}
Q& & & \\
R_{s+1}& \Psi_{s+1}''& & \\
\vdots& & \ddots &\\
R_{T}&&& \Psi_{T}''
\end{pmatrix}
 \] and find that $\Psi$ is a rigid analytic trivialization of $\Phi$. So the desired result follows.
\end{proof}

\section{The key lemma}\label{Sec: Key Lemma}  We continue with the setting and notation given in Sec.~\ref{sub: set up} and Sec.~\ref{sub:Def fiber coproduct}. As $\cM$ is a quotient of $\oplus_{\ell=1}^{T}\cM_{\ell}'$, we have the natural projection map $\mu:\oplus_{\ell =1}^{T}\cM_{\ell}' \twoheadrightarrow \cM$. In fact, according to the definition of $\cM$ we can write down the map $\mu$ explicitly as
\begin{equation}\label{E: mu}
\mu\left( ( \sum_{i=1}^{r}f_{\ell i}(t) x_{\ell i}+\sum_{j=1}^{m_{\ell}} f_{\ell j}(t) y_{\ell j}  )_{\ell}\right)= \sum_{i=1}^{r} \left(\sum_{\ell=1}^{T}  f_{\ell i}(t) \right)x_{i}+ \sum_{\ell =s+1}^{T} \sum_{j=1}^{m_{\ell}} f_{\ell j}(t) y_{\ell j}.
\end{equation}

According to the set up in Sec.~\ref{sub: set up} that $\alpha$ is identified with $\widetilde{\alpha}_{\ell}$ in $\cM_{\ell}'$, it follows that
\begin{equation}\label{E: Span Equality}
  \Span_{\ok[t]} \left\{x_{\ell i}-x_{\ell' i}| 1\leq \ell , \ell' \leq T, \hbox{ }1\leq i \leq r \right\} =  \Span_{\ok[\sigma]} \left\{\alpha_{\ell i}-\alpha_{\ell' i}| 1\leq \ell , \ell' \leq T, \hbox{ }1\leq i \leq n \right\} ,
 \end{equation}
 hence it is well-defined so that we can denote by $\alpha_{i}$ the image of $\alpha_{\ell i}$ for any $1\leq \ell\leq T$ and $1\leq i \leq n$. Note that such a fact can be also seen from the definition of fiber coproduct.

We denote by $\alpha_{i}$ the image of $\alpha_{\ell i}$ in $\cM$, by $\alpha$ the image of $\widetilde{\alpha}_{\ell}\in\Mat_{1\times n}(\cM_{\ell}')$ in $\Mat_{1\times n}(\cM)$,  and by $\beta_{\ell j}$ the image of $\beta_{\ell j}\in \cM_{\ell}'$ in $\cM$, which are well-defined by (\ref{E: Span Equality}) and the condition of $\ok[\sigma]$-basis  $(\widetilde{\alpha}_{\ell},\beta_{\ell})$ for $\cM_{\ell}'$. From the setting in Sec.~~\ref{sub: set up}, we see that $\left( \alpha, \beta_{s+1},\ldots,\beta_{T} \right)$ is a $\ok[\sigma]$-basis for $\cM$.

\subsection{The setting} For each $1\leq \ell \leq T$, we let $G_{\ell}$ be the $t$-module associated to the Anderson dual $t$-motive $\cM_{\ell}'$, i.e., we have the $\FF_{q}[t]$-module isomorphism
\[ G_{\ell}(\ok)\cong \cM_{\ell}'/(
\sigma-1) \cM_{\ell}'.\] To simplify the notation, we denote by $[-]$ the $\FF_{q}[t]$-action on any $t$-module without confusions. We denote by $H$ the $t$-module associated to the Anderson dual $t$-motive $\cN$. By our hypothesis that $\cN\cong \cM_{\ell}'$ for $1\leq \ell \leq s$ and the identification of $\ok[\sigma]$-bases $\alpha$ and $\widetilde{\alpha}_{\ell}$, $H$ is the $t$-module associated to $\cM_{\ell}'$ for $1\leq \ell \leq s$. 

By Proposition~\ref{Prop: t-motive M} we know that $\cM$ is an Anderson dual $t$-motive. We let $G$ be the $t$-module associated to $\cM$, ie., $G(\ok)\cong \cM/ (\sigma-1)\cM$ as $\FF_{q}[t]$-modules.  Recall that $(\alpha,\beta_{s+1},\ldots,\beta_{T})$ is a $\ok[\sigma]$-basis of $\cM$ and the rank of $\cM_{\ell}'$ over $\ok[\sigma]$ is $n+h_{\ell}$ for $s+1\leq \ell \leq T$. So the dimension of $G$ is
\begin{equation}\label{E:dimG}
 \dim G=n+h_{s+1}+\cdots+h_{T}  .
\end{equation}

\subsection{The main diagram}\label{subsec: main diagram}
\begin{definition}
Let $n$ be the rank of $\cN$ over $\ok[\sigma]$. For any integer $m\geq n$ and any vector $\bz=(z_{1},\ldots,z_{m})^{\tr}\in \CC_{\infty}^{m}$, we put 
\[\hat{\bz}:=\begin{pmatrix}
z_{1}\\
\vdots\\
z_{n}
\end{pmatrix}\hbox{ and }
\bz_{-}:=\begin{pmatrix}
z_{n+1}\\
\vdots\\
z_{m}
\end{pmatrix} \]
and  so $\bz$ is expressed as
\[\bz=\begin{pmatrix}
\hat{\bz}\\
\bz_{-}
\end{pmatrix} .\]
\end{definition}

\begin{definition}\label{Def:pi}
We define a morphism $\pi \colon \bigoplus_{\ell=1}^{T} G_{\ell} \to G$  of algebraic groups by
\[
\pi((\bz_{1}^{\tr}, \dots, \bz_{T}^{\tr})^{\tr}) = (\sum_{\ell=1}^{T} \hat{\bz}_{\ell }^{\tr}, \bz_{1-}^{\tr}, \dots, \bz_{T-}^{\tr})^{\tr}
.\]
\end{definition}

Recall that $\mu:\oplus_{\ell=1}^{T}\cM_{\ell}'\rightarrow \cM$ is the natural quotient map, which is a left $\ok[t,\sigma]$-module homorphism by (\ref{E: Span Equality}). Via $\mu$ we find from the following Lemma that $\pi$ is indeed a morphism of $t$-modules. 

\begin{lemma} \label{L:Key Lemma}Let notation be given as above. Then the following diagram
\[
\xymatrix{
\bigoplus_{\ell=1}^{T} \cM'_{\ell} \ar[r]^{\Delta} \ar[d]_{\mu} & \bigoplus_{\ell=1}^{T} G_{\ell}(\ok) \ar[d]^{\pi} \\
\cM \ar[r]^{\Delta} & G(\ok)
}
\]
 commutes. In particular, $\pi$ is a morphism of $t$-modules. 
\end{lemma}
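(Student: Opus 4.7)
The plan is to verify the commutativity of the diagram by direct computation on an arbitrary element of $\bigoplus_{\ell=1}^T \cM'_\ell$, and then deduce the second assertion as a formal consequence of the commutativity together with the fact that $\mu$ is already a left $\ok[t,\sigma]$-module homomorphism.

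Concretely, I would pick an element $(y_1,\ldots,y_T)\in\bigoplus_{\ell=1}^T\cM'_\ell$ and expand each $y_\ell$ with respect to the $\ok[\sigma]$-basis $(\widetilde{\alpha}_\ell,\beta_\ell)$ of $\cM'_\ell$, writing
\[
y_\ell=\sum_{i=1}^n g_{\ell i}\,\alpha_{\ell i}+\sum_{j=1}^{h_\ell}g'_{\ell j}\,\beta_{\ell j},\qquad g_{\ell i},g'_{\ell j}\in\ok[\sigma],
\]
with the convention that the second sum is empty when $1\le\ell\le s$. Following the diagram down then right: by the explicit formula for $\mu$ and the identifications $\alpha_{\ell i}\mapsto\alpha_i$ and $\beta_{\ell j}\mapsto\beta_{\ell j}$, which are justified by equation \eqref{E: Span Equality}, one obtains
\[
\mu(y_1,\ldots,y_T)=\sum_{i=1}^n\Bigl(\sum_{\ell=1}^T g_{\ell i}\Bigr)\alpha_i+\sum_{\ell=s+1}^T\sum_{j=1}^{h_\ell}g'_{\ell j}\,\beta_{\ell j},
\]
which is the expansion of $\mu(y_1,\ldots,y_T)$ in the $\ok[\sigma]$-basis $(\alpha,\beta_{s+1},\ldots,\beta_T)$ of $\cM$. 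Applying $\Delta$ and using the $\FF_q$-linearity (in fact additivity) of the map $\delta$ then yields the vector whose first $n$ coordinates are $\sum_\ell \delta(g_{\ell i})$ and whose remaining coordinates are the $\delta(g'_{\ell j})$. Following the diagram right then down, one first applies $\Delta$ to each $y_\ell$ to obtain $(\delta(g_{\ell 1}),\ldots,\delta(g_{\ell n}),\delta(g'_{\ell 1}),\ldots,\delta(g'_{\ell h_\ell}))^\tr$, and then applying $\pi$ as in Definition~\ref{Def:pi} sums the first $n$ coordinates across $\ell$ while retaining the remaining coordinates unchanged. The two outputs coincide, establishing commutativity.

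For the second assertion, I would argue as follows. Since $\mu$ is the quotient map onto a left $\ok[t,\sigma]$-module, it commutes with multiplication by $t$; the map $\Delta$ is surjective with kernel $(\sigma-1)\cM'_\ell$ (respectively $(\sigma-1)\cM$), and by construction it intertwines multiplication by $t$ on the dual $t$-motive side with the $\FF_q[t]$-action on the $t$-module side. The commutative square therefore forces $\pi$ to intertwine the $\FF_q[t]$-actions on $\bigoplus_\ell G_\ell(\ok)$ and $G(\ok)$. Since $\pi$ is already a morphism of additive algebraic groups defined over $\ok$ and $\ok$-valued points are Zariski dense, this compatibility on $\ok$-points promotes $\pi$ to a morphism of $t$-modules, completing the proof.

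I do not anticipate serious obstacles: the statement is essentially bookkeeping with the $\ok[\sigma]$-bases, and the only substantive input is the identification \eqref{E: Span Equality} between the $\ok[t]$-span and the $\ok[\sigma]$-span of the relations $x_{\ell i}-x_{\ell' i}$, which allows one to pass freely between the $\ok[t]$-picture used to define $\cM$ and the $\ok[\sigma]$-picture used to compute $\Delta$. The only mild care required is the correct handling of the split between indices $\ell\le s$ (where $\cM'_\ell\cong\cN$ contributes no $\beta$-block) and $\ell\ge s+1$ (where there is a genuine $\beta_{\ell j}$ block surviving in $\cM$), so that the projection $\pi$ matches the way $\mu$ identifies the $\alpha$-components but keeps the $\beta$-components separate.
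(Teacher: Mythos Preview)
Your proposal is correct and follows essentially the same approach as the paper's proof. The only cosmetic difference is that the paper reduces by additivity to single basis terms $f_{\ell i}(\sigma)\alpha_{\ell i}$ and $g_{\ell j}(\sigma)\beta_{\ell j}$ and checks each separately, whereas you carry a general element through both paths at once; the underlying computation, the reliance on \eqref{E: Span Equality}, and the Zariski-density argument for the second assertion are identical.
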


\begin{proof}
Recall that for each $1\leq \ell \leq T$, $(\widetilde{\alpha}_{\ell},\beta_{\ell})$ is a $\ok[\sigma]$-basis for $\cM_{\ell}'$. Since the maps $\Delta, \pi$ and $\mu$ are additive, it suffices to show the commutativity of the diagram on elements of the form
\begin{equation}\label{E:case1}
\omega=f_{\ell i}(\sigma) \alpha_{\ell i}\in \cM_{\ell}' \hookrightarrow \oplus_{\ell=1}^{T}\cM_{\ell}' \hbox{ for }1\leq \ell \leq T, 1\leq i\leq n
\end{equation}
and
\begin{equation}\label{E:case2}
\omega=g_{\ell i}(\sigma)\beta_{\ell i}\in \cM_{\ell}'\hookrightarrow \oplus_{\ell=1}^{T}\cM_{\ell}' \hbox{ for }s+1\leq \ell \leq T, 1\leq i\leq h_{\ell} .
\end{equation}

Let $\omega=f_{\ell i}(\sigma) \alpha_{\ell i}$ be given in (\ref{E:case1}). We write
\[\Delta(\omega)=
\begin{array}{rclll}
\ldelim({8}{4pt}[] & 0 & \rdelim){8}{4pt}[] & \rdelim\}{3}{10pt}[$i$] & \rdelim\}{6}{10pt}[$n$] \\
& \vdots & & & \\
& \delta\left(f_{\ell i}(\sigma) \right) & & & \\
& \vdots & & & \\
& 0 & & & \\
& \vdots & & & \rdelim\}{2}{10pt}[$h_{\ell}$] \\
& 0 & & & \\
\end{array} \hspace{10pt} \in G_{\ell}\hookrightarrow \oplus_{\ell=1}^{T}G_{\ell}.
\]
Then by the definition of $\pi$ we have
\[\pi\left( \Delta(\omega)\right)=
\begin{array}{rclll}
\ldelim({8}{4pt}[] & 0 & \rdelim){8}{4pt}[] & \rdelim\}{3}{10pt}[$i$] & \rdelim\}{6}{10pt}[$n$] \\
& \vdots & & & \\
& \delta\left(f_{\ell i}(\sigma) \right) & & & \\
& \vdots & & & \\
& 0 & & & \\
& \vdots & & & \rdelim\}{2}{10pt}[$h_{s+1} + \cdots + h_{T}$] \\
& 0 & & & \\
\end{array} \hspace{50pt} \in G.
\]
On the other hand, we recall that the image of $\alpha_{\ell i}$ under the projection map $\mu :\oplus_{\ell=1}^{T}\cM_{\ell}' \twoheadrightarrow \cM$ is denoted by $\alpha_{i}$. As $\mu$ is a left $\ok[t,\sigma]$-module homomorphism, we have
\[\mu\left(f_{\ell i}(\sigma)\alpha_{\ell i} \right)=f_{\ell i}(\sigma) \alpha_{i} .\]
Recall further that $\left(\alpha_{1},\ldots,\alpha_{n},\beta_{s+1},\ldots,\beta_{T} \right)$ is a $\ok[\sigma]$-basis for $\cM$. Hence by the definition of $\Delta$, we see that 
\[ \Delta\left( \mu\left(\omega \right) \right)=\Delta\left(f_{\ell i}(\sigma) \alpha_{i} \right) \]
is equal to
\[\pi\left( \Delta(\omega)\right)=
\begin{array}{rclll}
\ldelim({8}{4pt}[] & 0 & \rdelim){8}{4pt}[] & \rdelim\}{3}{10pt}[$i$] & \rdelim\}{6}{10pt}[$n$] \\
& \vdots & & & \\
& \delta\left(f_{\ell i}(\sigma) \right) & & & \\
& \vdots & & & \\
& 0 & & & \\
& \vdots & & & \rdelim\}{2}{10pt}[$h_{s+1} + \cdots + h_{T}$] \\
& 0 & & & \\
\end{array} \hspace{50pt} \in G.
\]

Now we consider the case of $\omega=g_{\ell i}(\sigma) \beta_{\ell i}$  in (\ref{E:case2}).  We denote by
\[\bz_{\ell}:=\Delta(\omega)=
\begin{array}{rclll}
\ldelim({8}{4pt}[] & 0 & \rdelim){8}{4pt}[] & \rdelim\}{2}{10pt}[$n$] & \\
& \vdots & & & \\
& 0 & & \rdelim\}{3}{10pt}[$i$] & \rdelim\}{5}{10pt}[$h_{\ell}$] \\
& \vdots & & & \\
& \delta\left(g_{\ell i}(\sigma) \right) & & & \\
& \vdots & & & \\
& 0 & & & \\
\end{array} \hspace{10pt} \in G_{\ell}\hookrightarrow \oplus_{\ell=1}^{T}G_{\ell}.
\]
Since $s+1 \leq \ell \leq T$ and $1\leq i \leq h_{\ell}$, by the definition of $\pi$ we have
\[\pi\left( \Delta(\omega)\right)=
\begin{array}{rcll}
\ldelim({8}{4pt}[] & 0 & \rdelim){8}{4pt}[] & \rdelim\}{2}{10pt}[$n$] \\
& \vdots & & \\
& 0 & & \rdelim\}{2}{10pt}[$h_{s+1}+\cdots+h_{\ell-1}$] \\
& \vdots & & \\
& {\bz_{\ell}}_{-} & & \\
& \vdots & & \rdelim\}{2}{10pt}[$h_{\ell+1}+\cdots+h_{T}$] \\
& 0 & & \\
\end{array} \hspace{90pt} \in G.
\]

Recall that we identify $\mu(\beta_{\ell})$ with $\beta_{\ell}$. Since $\mu$ is a left $\ok[t,\sigma]$-module homomorphism, we have
\[\mu(\omega)=g_{\ell i}(\sigma)\mu( \beta_{\ell i})=g_{\ell i}(\sigma)\beta_{\ell i}. \] 
Since $(\alpha_{1},\ldots,\alpha_{n},\beta_{s+1},\ldots,\beta_{T})$ is a $\ok[\sigma]$-basis for $\cM$,  via this basis we see that $\Delta (\mu(\omega))$ is the same as 
$\pi \left( \Delta(\omega) \right)$.

Since the map $\mu$ induces an $\FF_{q}[t]$-module homomorphism
\[\bigoplus_{\ell=1}^{T}\left(\cM'_{\ell} / (\sigma-1)\cM'_{\ell} \right) \rightarrow \cM / (\sigma-1)\cM ,\] the diagram above shows that $\pi:\oplus_{\ell=1}^{T}G_{\ell}(\ok)\rightarrow G(\ok)$ is a left $\FF_{q}[t]$-module homomorphism, and hence $\pi$ is a morphism of $t$-modules since the group of $\ok$-valued points is Zariski dense inside the algebraic group in question. 

\end{proof}

\begin{corollary}\label{Cor: defined over A}
Let notation be given as above. Let $\rho_{\ell}$ be the map defining the $\FF_{q}[t]$-module structure of $G_{\ell}$ for $1\leq \ell \leq T$, and $\rho$ be the map defining the $\FF_{q}[t]$-module of $G$. If  $\rho_{\ell t}\in \Mat_{\dim G_{\ell}}(A[\tau])$ for every $1\leq \ell \leq T$, then $\rho_{t} \in  \Mat_{\dim G}(A[\tau])$.
\end{corollary}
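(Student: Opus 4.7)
The plan is to exploit the explicit form of $\pi \colon \bigoplus_{\ell=1}^T G_\ell \to G$ recorded in Definition~\ref{Def:pi}, which Lemma~\ref{L:Key Lemma} promotes to a morphism of $t$-modules. Coordinate-wise, $\pi$ is simply the $\FF_q$-linear map that sums the first $n$ coordinates of each $\bz_\ell$ and concatenates the remaining blocks $\bz_{\ell,-}$; consequently, when we identify morphisms $\GG_a^{d_1} \to \GG_a^{d_2}$ of algebraic groups over $\ok$ with matrices in $\Mat_{d_2\times d_1}(\ok[\tau])$, the morphism $\pi$ corresponds to a matrix $P \in \Mat_{\dim G \times (\sum_\ell \dim G_\ell)}(\FF_q)$, all of whose entries are $0$ or $1$.

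The key step is to exhibit an $\FF_q$-linear section of $\pi$ as a morphism of algebraic groups (not required to be a morphism of $t$-modules). I define $S \colon G \to \bigoplus_{\ell=1}^T G_\ell$ by sending $(\bw^{\tr}, \bu_{s+1}^{\tr}, \ldots, \bu_T^{\tr})^{\tr}$, with $\bw \in \CC_\infty^n$ and $\bu_\ell \in \CC_\infty^{h_\ell}$, to the tuple $(\bz_1,\ldots,\bz_T)$ given by $\bz_1 = \bw$, $\bz_\ell = 0$ for $2 \leq \ell \leq s$, and $\bz_\ell = (0_n^{\tr}, \bu_\ell^{\tr})^{\tr}$ for $s+1 \leq \ell \leq T$. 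A direct check against Definition~\ref{Def:pi} shows $\pi \circ S = \mathrm{id}_G$, and by construction the corresponding matrix of $S$ also lies in $\Mat(\FF_q)$.

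Since $\pi$ is a morphism of $t$-modules, the commutation relation
\[
P \cdot \diag(\rho_{1t}, \ldots, \rho_{Tt}) = \rho_t \cdot P
\]
holds in $\Mat(\ok[\tau])$. Right-multiplying by $S$ and using $PS = I_{\dim G}$ gives
\[
\rho_t \;=\; P \cdot \diag(\rho_{1t}, \ldots, \rho_{Tt}) \cdot S.
\]
As $P$ and $S$ have entries in $\FF_q \subset A$ and each $\rho_{\ell t}$ lies in $\Mat(A[\tau])$ by hypothesis, the right-hand side lies in $\Mat_{\dim G}(A[\tau])$, completing the argument. There is no serious obstacle here: the entire proof rests on the observation that the canonical projection $\pi$ is $\FF_q$-linear and admits an $\FF_q$-linear set-theoretic section, both of which are immediate from the explicit formula in Definition~\ref{Def:pi}.
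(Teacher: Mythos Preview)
Your argument is correct and is essentially a fleshed-out version of the paper's two-sentence proof: the paper merely observes that $\pi$ is surjective and $\FF_q[t]$-linear and then appeals to Definition~\ref{Def:pi}, whereas you make explicit the $\FF_q$-linear section $S$ and the resulting identity $\rho_t = P\cdot \diag(\rho_{1t},\ldots,\rho_{Tt})\cdot S$ in $\Mat(\ok[\tau])$.

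One small edge case to tidy up: your explicit formula for $S$ presupposes $s\geq 1$, since when $s=0$ the clauses ``$\bz_1=\bw$'' and ``$\bz_\ell=(0_n^{\tr},\bu_\ell^{\tr})^{\tr}$ for $s+1\leq \ell\leq T$'' both apply to $\ell=1$ and conflict. The fix is immediate (e.g.\ set $\bz_1=(\bw^{\tr},\bu_1^{\tr})^{\tr}$ and $\bz_\ell=(0_n^{\tr},\bu_\ell^{\tr})^{\tr}$ for $\ell\geq 2$ when $s=0$), or more cleanly one can simply note that any surjective $\FF_q$-linear map between finite-dimensional $\FF_q$-vector spaces admits an $\FF_q$-linear section, which is all your argument needs.
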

\begin{proof}
It is clear to see that the map $\pi$ is surjective. Since $\pi$ is $\FF_{q}[t]$-linear, the result is derived from Definition~\ref{Def:pi}.
\end{proof}

\subsection{The key lemma} In this section, we give a formula which is a crucial step in the proof of Theorem~\ref{Thm: Introduction}. However, we state and prove the formulation in the setting as general as possible. We follow Brownawell and Papanikolas to introduce the notion of tractable coordinates, to which Yu's sub-t-module theorem is most easily applied.

\begin{definition}\label{Def:Tractable}
Let $L$ be a field extension over $k$ and suppose that $L^{d}:=\Mat_{d\times 1}(L)$ has a left $\FF_{q}[t]$-module structure via an $\FF_{q}$-linear ring homomorphism
\[\FF_{q}[t]\rightarrow \End_{\FF_{q}}(L^{d}) .\] The $i$th coordinate of $L^{d}$ is called tractable if the $i$th coordinate of $a \cdot \bz$ is equal to $a(\theta)z_{i}$ for any $a\in \FF_{q}[t]$ and any $\bz = (z_{1}, \dots, z_{d})^{\tr} \in L^{d}$.

Suppose that the affine variety $\mathbb{A}_{/L}^{d}$ has a left $\FF_{q}[t]$-module structure in the sense that for every field extension $L'/L$, $\mathbb{A}^{d}(L')$ has a left $\FF_{q}[t]$-module structure that is functorial in $L'$. We say that the $i$th coordinate of $\mathbb{A}_{/L}^{d}$ is tractable if for every field extension $L'/L$, the $i$th coordinate of $\mathbb{A}^{d}(L')$ is tractable.
\end{definition}
 Typical examples of tractable coordinates arise from the Lie algebras of tensor powers of the Carlitz module.  For any positive integer $s$, we note that $\Lie \bC^{\otimes s}(L)\cong L^{s}$ has a left $\FF_{q}[t]$-module structure via $\partial [-]_{s}$ for a field extension $L/k$. From (\ref{E: Def of C otimes s}) we see that the $s$th coordinate of $\Lie \bC^{\otimes s}(L)$ is tractable.

The main result in this section is the following lemma.
\begin{lemma}\label{Lemma: Key Lemma}
Let  $\cN$, $\left\{ \cM_{\ell}' \right\}_{\ell=1}^{T}$ and $\cM$ be the Anderson dual $t$-motives with hypothesis given in Sec.~\ref{Subsec: Fiber Coproduct}. Let $H$ be the $n$-dimensional $t$-module associated to $\cN$, $G_{\ell}$ be the $t$-module associated to $\cM_{\ell}'$ for $\ell=1,\ldots,T$, and $G$ be the $t$-module associated to $\cM$.  Suppose that the $n$th coordinate of $\Lie G_{\ell}(\CC_{\infty})$ is tractable for all $1\leq \ell \leq T$.  Let $Z_{\ell}\in \Lie G_{\ell} (\CC_{\infty})$ be a vector with $n$th coordinate denoted by $\cL_{\ell n}$. Let $\pi:\oplus_{\ell=1}^{T}G_{\ell}\rightarrow G$ be the morphism of $t$-modules given in Definition~\ref{Def:pi}. For each $1\leq \ell \leq T$,  let $b_{\ell}\in \FF_{q}[\theta]$ be  any polynomial and put  $\bv_{\ell}:= \exp_{G_{\ell}}(Z_{\ell})\in G_{\ell}(\CC_{\infty})$, $Z:=\partial \pi \left( (\partial[b_{\ell}(t)] Z_{\ell})_{\ell}\right)\in \Lie G(\CC_{\infty}) $ and $\bv:=\pi \left( ([b_{\ell}(t)]\bv_{\ell})_{\ell}\right)\in G(\CC_{\infty})$. Then we have
\begin{itemize}
\item[(a)] The $n$th coordinate of $Z$ is equal to $\sum_{\ell=1}^{T}b_{\ell} \cL_{\ell n}$.
\item[(b)] $\exp_{G}(Z)$ is equal to $\bv$.
\end{itemize}
\end{lemma}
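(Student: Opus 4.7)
The plan is to handle (a) by a direct coordinate computation, using the explicit formula for $\pi$ together with the tractability hypothesis, and to handle (b) by combining the functorial property of exponential maps with the defining functional equation $\rho_a \circ \exp = \exp \circ \partial\rho_a$.

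For part (a), I would first observe that $\pi \colon \bigoplus_{\ell=1}^{T} G_\ell \to G$ is given by a linear map of affine spaces (see Definition~\ref{Def:pi}), so its differential at the origin, $\partial\pi \colon \bigoplus_\ell \Lie G_\ell(\CC_\infty) \to \Lie G(\CC_\infty)$, is described by exactly the same formula: it sums the first $n$ coordinates of the input blocks and concatenates the remaining coordinates. In particular, the $n$th coordinate of $\partial\pi((Y_\ell)_\ell)$ is $\sum_{\ell=1}^{T} (Y_\ell)_n$. Applying this to $Y_\ell := \partial[b_\ell(t)]Z_\ell$ and invoking the tractability of the $n$th coordinate of $\Lie G_\ell(\CC_\infty)$, we get $(Y_\ell)_n = b_\ell(\theta)\cdot (Z_\ell)_n = b_\ell \cL_{\ell n}$, whence the $n$th coordinate of $Z$ is $\sum_\ell b_\ell \cL_{\ell n}$, as claimed.

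For part (b), I would use that $\pi$ is a morphism of $t$-modules (Lemma~\ref{L:Key Lemma}), so by the functoriality of exponential maps (\ref{E:ExpFunctorial}) we have $\pi \circ \exp_{\bigoplus_\ell G_\ell} = \exp_G \circ \partial\pi$, while $\exp_{\bigoplus_\ell G_\ell}$ evaluates coordinate-block by coordinate-block as $(\exp_{G_\ell})_{\ell}$. Applying the functional equation (\ref{E:FEexp}) to each $G_\ell$ with $a = b_\ell(t)$ gives $\exp_{G_\ell}(\partial[b_\ell(t)]Z_\ell) = [b_\ell(t)]\exp_{G_\ell}(Z_\ell) = [b_\ell(t)]\bv_\ell$. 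Combining these yields
\[
\exp_G(Z) \;=\; \exp_G\!\bigl(\partial\pi\bigl((\partial[b_\ell(t)]Z_\ell)_\ell\bigr)\bigr) \;=\; \pi\!\bigl(([b_\ell(t)]\bv_\ell)_\ell\bigr) \;=\; \bv.
\]

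Neither step looks hard. The only subtlety worth verifying carefully is that the $\partial\pi$ on Lie algebras really is given by the same coordinate formula as $\pi$; but this is immediate because $\pi$ is itself linear (hence equal to its own derivative under the identification of $\Lie \GG_a^d$ with $\CC_\infty^d$). If anything, the bookkeeping obstacle is making sure the block decomposition $\bz = (\hat{\bz}, \bz_{-})$ used in Definition~\ref{Def:pi} is correctly matched with the $\ok[\sigma]$-basis $(\alpha, \beta_{s+1}, \ldots, \beta_T)$ of $\cM$ so that the \emph{first} $n$ coordinates of $\Lie G$ (and of each $\Lie G_\ell$) really correspond to the image of $\cN$; this follows from the construction in Section~\ref{Subsec: Fiber Coproduct} and from Lemma~\ref{L:Key Lemma}.
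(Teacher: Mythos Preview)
Your proof is correct and follows essentially the same approach as the paper: for (b) you use the functoriality of the exponential map (the paper phrases this as a commutative diagram) together with the functional equation $\rho_a\circ\exp=\exp\circ\partial\rho_a$ applied to each $G_\ell$, and for (a) you use that $\pi$ is $\tau$-free (equivalently, linear) so $\partial\pi=\pi$ on coordinates, together with tractability. If anything, your write-up of (b) is slightly more explicit than the paper's, which simply says ``follows from the diagram.''
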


\begin{proof}
By the canonical identification $\Lie\left(\oplus_{\ell}G_{\ell}(\CC_{\infty})\right)=\oplus_{\ell}\Lie G_{\ell}(\CC_{\infty})$,  we have the following commutative diagram  according to (\ref{E:ExpFunctorial}):
\begin{equation}\label{E:diagram for pi and exp}
 \xymatrix{
\oplus_{\ell=1}^{T}G_{\ell}(\CC_{\infty})  \ar[r]^{\pi} & G(\CC_{\infty})\\
\oplus_{\ell=1}^{T}\Lie G_{\ell}(\CC_{\infty}) \ar[u]^{\oplus_{\ell} \exp_{G_{\ell}}} \ar[r]^{\partial \pi}          &\Lie G(\CC_{\infty})\ar[u]^{\exp_{G}} .}
 \end{equation} 
 Property $(b)$ follows from the diagram above.
 
 To prove $(a)$, we note that the $n$th coordinate of $\partial [b_{\ell}(t)]Z_{\ell}$ is given by $b_{\ell} \cL_{\ell n}$ since by hypothesis the $n$th coordinate of $\Lie G_{\ell}(\CC_{\infty})$ is tractable. By Definition~\ref{Def:pi} the morphism $\pi$ has no $\tau$-terms when expressing it as a matrix with entries in $\ok[\tau]$. So the induced morphism $\partial \pi$  has the same form as $\pi$ (see~(\ref{E:partial rho a})), implying the desired property from the definition of $Z$.
\end{proof}

\begin{remark} If we take $\cN$ and all $\left\{ \cM_{\ell}'\right\}_{\ell=1}^{T}$ to be $C^{\otimes n}$, then the fiber coproduct of $\left\{ \cM_{\ell}'\right\}_{\ell=1}^{T}$ over $\cN$ is $C^{\otimes n}$ and hence its associated $t$-module $G$ is $\bC^{\otimes n}$. In this case, the morphism $\pi:\oplus_{\ell=1}^{T}\bC^{\otimes n}\rightarrow \bC^{\otimes n}$ is the sum of vectors. This special case  would help the reader understand how one uses Lemma~\ref{Lemma: Key Lemma} to generalize \cite[Thm.~3.8.3(I)]{AT90} to higher depth MZV's in Sec.~\ref{Sec: proof of main thm}

\end{remark}

\section{The convergence of $\log_{G_{\ell}}(\bv_{\ell})$}\label{Sec: log G ell}
In this section, we consider the $t$-module and special point constructed in~\cite{CM17}, and the primary goal is to show Theorem~ \ref{theorem_log_li} asserting that the logarithm of the $t$-module in question converges $\infty$-adically at the special point, and certain coordinates of the logarithm give Carlitz multiple star polylogarithms. To prove Theorem~\ref{Thm: Introduction}, the results presented in this section are applied in Section~\ref{Sec: Proof of Main Thm} to illustrate that all the conditions of Lemma~\ref{Lemma: Key Lemma} are satisfied for our setting.

\subsection{The constructions of the $t$-module and special point}\label{Subsec: Gu}

In what follows, we fix $\fs = (s_{1}, \dots, s_{r})\in \NN^{r}$ and $\bu = (u_{1}, \dots, u_{r}) \in (\ok^{\times})^{r}$.  We will define a pair $(G,\bv)$ associated to $\fs$ and $\bu$, where $G$ is a $t$-module defined over $\ok$ and $\bv\in G(\ok)$.

Put $L_{0}:=1$, and $L_{i}:=(\theta-\theta^{q})\cdots (\theta-\theta^{q^{i}})$ for $i\in\NN$. We define the $\fs$th Carlitz multiple polylogarithm, abbreviated as CMPL, as follows (see~\cite{C14}):
\begin{equation}\label{E:Def of CMPL}
\Li_{\fs}(z_1,\ldots,z_r):=\sum_{i_1>\cdots >i_r \geq 0} \frac{z_{1}^{q^{i_{1}}}\cdots z_{r}^{q^{i_{r}}}  }  {L_{i_1}^{s_{1}}\cdots L_{i_r}^{s_{r}}} . 
\end{equation}
To avoid heavy notation on the subscript, we use the same notation $\Li_{\fs}$ in the function field setting. Since we no longer use the classical multiple polylogarithms in the later context, there will not be misunderstanding.

We also define the $\fs$th Carlitz multiple star polylogarithm, abbreviated as CMSPL, as follows (see \cite{CM17}):
 \begin{equation}\label{E:Def of CMSPL}
\Li_{\fs}^{\star}(z_1,\ldots,z_r):=\sum_{i_1\geq \cdots  \geq i_r \geq 0} \frac{z_{1}^{q^{i_{1}}}\cdots z_{r}^{q^{i_{r}}}  }  {L_{i_1}^{s_{1}}\cdots L_{i_r}^{s_{r}}} . 
\end{equation}

\begin{remark}\label{Rem:NonvanDomainLi}
For an $r$-tuple $\fs=(s_{1},\ldots,s_{r})\in \NN^{r}$, we put
\[\mathbb{D}_{\fs}':=\left\{(x_{1},\ldots,x_{r})\in \CC_{\infty}^{r}: |x_{i}|_{\infty}< q^{\frac{s_{i}q}{q-1}} \hbox{ for }i=1,\ldots,r \right\}\subset \mathbb{D}_{\fs}''  ,\] where

\begin{equation}\label{Def: Ds''}
\mathbb{D}_{\fs}'':=\left\{(x_{1},\ldots,x_{r})\in \CC_{\infty}^{r}: |x_{1}|_{\infty} < q^{\frac{s_{1}q}{q-1}} \hbox{ and } |x_{i}|_{\infty} \leq q^{\frac{s_{i}q}{q-1}} \hbox{ for }i=2,\ldots,r \right\}  .
\end{equation}

Since $\Li_{\fs}(\bx)$ and $\Li_{\fs}^{\star}(\bx)$ have the same general terms, by \cite[Rem.~5.1.5]{C14} these two series converge $\infty$-adically for any $\bx \in \mathbb{D}_{\fs}''$, and $\Li_{\fs}(\bx)$ is non-vanishing for any $\bx\in \mathbb{D}_{\fs}'\cap (\CC_{\infty}^{\times })^{r}$. We mention that $\mathbb{D}_{\fs}''$ is used in Theorem~\ref{theorem_log_li}.
\end{remark}

For $1 \leq \ell \leq r$,
we put $d_{\ell} := s_{\ell} + \cdots + s_{r}$ and $d := d_{1} + \cdots + d_{r}$.
Let $B$ be a $d \times d$-matrix of the form

\[
\left( \begin{array}{c|c|c}
B[11] & \cdots & B[1r] \\ \hline
\vdots & & \vdots \\ \hline
B[r1] & \cdots & B[rr]
\end{array} \right),
\]
where $B[\ell m]$ is a $d_{\ell} \times d_{m}$-matrix for each $\ell$ and $m$ and we call $B[\ell m]$ the $(\ell, m)$-th block sub-matrix of $B$.

For $1 \leq \ell \leq m \leq r$, we define the following matrices:

\[
N_{\ell} := \left(
\begin{array}{ccccc}
0 & 1 & 0 & \cdots & 0 \\
& 0 & 1 & \ddots & \vdots \\
& & \ddots & \ddots & 0 \\
& & & \ddots & 1 \\
& & & & 0
\end{array}
\right)
\in \Mat_{d_{\ell}}(\ok),
\]

\[
N := \left(
\begin{array}{cccc}
N_{1} & & & \\
& N_{2} & & \\
& & \ddots & \\
& & & N_{r}
\end{array}
\right)
\in \Mat_{d}(\ok),
\]

\[
E[\ell m] := \left(
\begin{array}{cccc}
0 & \cdots & \cdots & 0 \\
\vdots & \ddots & & \vdots \\
0 & & \ddots & \vdots \\
1 & 0 & \cdots & 0
\end{array}
\right)
\in \Mat_{d_{\ell} \times d_{m}}(\ok) \ \ \ (\mathrm{if} \ \ell = m),
\]

\[
E[\ell m] := \left(
\begin{array}{cccc}
0 & \cdots & \cdots & 0 \\
\vdots & \ddots & & \vdots \\
0 & & \ddots & \vdots \\
(-1)^{m-\ell} \prod_{e=\ell}^{m-1} u_{e} & 0 & \cdots & 0
\end{array}
\right)
\in \Mat_{d_{\ell} \times d_{m}}(\ok) \ \ \ (\mathrm{if} \ \ell < m),
\]

\[
E := \left(
\begin{array}{cccc}
E[11] & E[12] & \cdots & E[1r] \\
& E[22] & \ddots & \vdots \\
& & \ddots & E[r-1,r] \\
& & & E[rr]
\end{array}
\right)
\in \Mat_{d}(\ok).
\]
We further define

\[
E_{m} := \left( \begin{array}{c|c|c}
0 & 0 & 0 \\ \hline
0 & E[mm] & 0 \\ \hline
0 & 0 & 0
\end{array} \right) \in \Mat_{d}(\ok)
\]
to be the $d \times d$-matrix such that the $(m,m)$-th block sub-matrix is $E[mm]$
and the others are zero matrices.

We then define the $t$-module $G = G_{\fs, \bu} := (\GG_{a}^{d}, \rho)$ by
\begin{equation}\label{E:Explicit t-moduleCMPL}
  \rho_{t} = \theta I_{d} + N + E \tau
  \in \Mat_{d}(\ok[\tau]),
\end{equation}
and note that $G$ depends  only on $u_{1},\ldots,u_{r-1}$. Finally, we define the special point
\begin{equation}\label{E:v_s,u}
\bv: = \bv_{\fs, \bu} :=
\begin{array}{rcll}
\ldelim( {15}{4pt}[] & 0 & \rdelim) {15}{4pt}[] & \rdelim\}{4}{10pt}[$d_{1}$] \\
& \vdots & & \\
& 0 & & \\
& (-1)^{r-1} u_{1} \cdots u_{r} & & \\
& 0 & & \rdelim\}{4}{10pt}[$d_{2}$] \\
& \vdots & & \\
& 0 & & \\
& (-1)^{r-2} u_{2} \cdots u_{r} & & \\
& \vdots & & \vdots \\
& 0 & & \rdelim\}{4}{10pt}[$d_{r}$] \\
& \vdots & & \\
& 0 & & \\
& u_{r} & & \\[10pt]
\end{array} \in G(\ok).
\end{equation}

\begin{remark}\label{Rem:Integral}
If $\bu\in A^{r}$, then $\rho_{t}\in \Mat_{d}(A[\tau])$ and $\bv\in G(A)$. 
\end{remark}

\begin{remark}\label{Rem: G and M}
The $t$-module $G$ above is the $t$-module associated to the Anderson dual $t$-motive $\cM'$, where $\cM'$ is free of rank $r$ over $\ok[t]$ and the representing matrix by $\sigma$ on certain $\ok[t]$-basis for $\cM'$ is given by
\begin{equation}\label{E:Phi'}
\Phi' :=
\begin{pmatrix}
                (t-\theta)^{s_{1}+\cdots+s_{r}}  &  &  &  \\
                u_{1}^{(-1)}(t-\theta)^{s_{1}+\cdots+s_{r}}  & (t-\theta)^{s_{2}+\cdots+s_{r}}   &  &  \\
                  & \ddots & \ddots &  \\
                  &  & u_{r-1}^{(-1)}(t-\theta)^{s_{r-1}+s_{r}} & (t-\theta)^{s_{r}}  \\
\end{pmatrix}\in \Mat_{r}(\ok[t]),
\end{equation}
where \[\left\{  (t-\theta)^{s_{1}+\cdots+s_{r}},(t-\theta)^{s_{2}+\cdots+s_{r}} , \ldots,  (t-\theta)^{s_{r}} \right\}\]  are the diagonals and \[\left\{ u_{1}^{(-1)}(t-\theta)^{s_{1}+\cdots+s_{r}}, \ldots,  u_{r-1}^{(-1)}(t-\theta)^{s_{r-1}+s_{r}}\right\}\] are displayed below the diagonals. We note that $\cM'$ is an iterated extension of  tensor powers of the Carlitz $t$-motive.
\end{remark}

\begin{remark}\label{Rem: R.A.T of M'}
The Anderson dual $t$-motive $\cM'$ contains $C^{\otimes n}$ as a {\it{saturated}} sub-Anderson dual $t$-motive (see~\cite[Sec.~4.3.3]{ABP04}). Moreover,  $\cM'$ is rigid analytically trivial since a rigid analytic trivialization $\Psi'\in \GL_{r}(\TT)$ is given as the upper left square of $\Psi$ given in \cite[(2.3.7)]{CPY14} by changing $(Q_{1},\ldots,Q_{r})$ to $(u_{1},\ldots,u_{r})$.

\end{remark}

\subsection{The convergence}

To study the $\infty$-adic convergence issue about $\log_{G}$ at $\bv$, we adopt some techniques of \cite[2.4.3]{AT90}. We denote by
\[
\log_{G} = \sum_{i \geq 0} P_{i} \tau^{i}
\]
the logarithm of the $t$-module $G$, where $P_{0} = I_{d}$ and $P_{i} \in \Mat_{d}(\ok)$ for all $i$.

For a matrix $\gamma:=(\gamma_{ij})$ with entries in $\CC_{\infty}$, we put
\[|\gamma|_{\infty}:=\max_{i,j}\left\{ |\gamma_{ij}|_{\infty}\right\} .\]

\begin{lemma} \label{lemma_abs_P} Let $\fs=(s_{1},\ldots,s_{r})\in \NN^{r}$ and $\bu=(u_{1},\ldots,u_{r})\in (\ok^{\times })^{r}$. 
If $|u_{\ell}|_{\infty} \leq q^{\frac{s_{\ell} q}{q-1}}$ for each $1 \leq \ell < r$,
then we have
\[
|P_{i} N^{d_{\ell}-j} E_{\ell}|_{\infty} \leq q^{(d_{\ell}-j) q^{i} - (d_{\ell} q^{i} - d_{1}) \frac{q}{q-1}}
\]
for each $i, j, \ell$ with $i \geq 0$,  $1 \leq \ell \leq r$, and $1 \leq j \leq d_{\ell}$.
\end{lemma}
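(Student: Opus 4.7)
The plan is to derive a recursion on the coefficient matrices $P_{i}$ from the functional equation $\log_{G}\circ\rho_{t}=\partial\rho_{t}\circ\log_{G}$, and then verify the estimate by induction on $i$. Substituting $\rho_{t}=\theta I_{d}+N+E\tau$, $\partial\rho_{t}=\theta I_{d}+N$ and $\log_{G}=\sum_{i\geq 0}P_{i}\tau^{i}$, and comparing the coefficient of $\tau^{i}$ for $i\geq 1$ (the entries of $N$ lie in $\FF_{q}$, so $\tau^{i}N=N\tau^{i}$), one obtains
\begin{equation*}
(\theta-\theta^{q^{i}})P_{i}+\ad(N)(P_{i})=P_{i-1}E^{(i-1)},
\end{equation*}
where $\ad(N)(X):=NX-XN$. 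Since $\ad(N)$ is nilpotent and $\theta\neq\theta^{q^{i}}$, one inverts to get the finite Neumann series
\begin{equation*}
P_{i}=\sum_{k\geq 0}(-1)^{k}(\theta-\theta^{q^{i}})^{-k-1}\,\ad(N)^{k}\bigl(P_{i-1}E^{(i-1)}\bigr).
\end{equation*}

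The base case $i=0$ is handled by writing $N^{d_{\ell}-j}E_{\ell}$ explicitly as the rank-one matrix $\mathbf{e}_{d_{1}+\cdots+d_{\ell-1}+j}\,\mathbf{e}_{d_{1}+\cdots+d_{\ell-1}+1}^{\tr}$ of sup norm $1$; the claimed inequality then reduces to $d_{\ell}-j\geq(d_{\ell}-d_{1})\,q/(q-1)$, which is immediate as $d_{\ell}\leq d_{1}$. For the inductive step, right-multiply the Neumann series by $N^{d_{\ell}-j}E_{\ell}$ and expand $\ad(N)^{k}(X)=\sum_{m}(-1)^{m}\binom{k}{m}N^{k-m}XN^{m}$. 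The key observation is that, in the rank-one description above, $N^{m+d_{\ell}-j}E_{\ell}=0$ for $m\geq j$, and for $m<j$ its nonzero column sits in the first slot of the $\ell$-th block column; together with the fact that every block $E[\ell',\ell]^{(i-1)}$ of $E^{(i-1)}$ has its unique nonzero entry in the first column of the block, this forces $m=j-1$ to be the only surviving term. Hence the computation collapses to
\begin{equation*}
E^{(i-1)}N^{d_{\ell}-1}E_{\ell}=\sum_{\ell'\leq\ell}c_{\ell',\ell}^{(i-1)}\,\mathbf{e}_{d_{1}+\cdots+d_{\ell'}}\,\mathbf{e}_{d_{1}+\cdots+d_{\ell-1}+1}^{\tr},
\end{equation*}
with $c_{\ell',\ell}=(-1)^{\ell-\ell'}\prod_{e=\ell'}^{\ell-1}u_{e}$ and $c_{\ell,\ell}=1$; each factor $P_{i-1}\mathbf{e}_{d_{1}+\cdots+d_{\ell'}}$ is the unique nonzero column of $P_{i-1}E_{\ell'}$, so the induction hypothesis applies with $j'=d_{\ell'}$.

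Assembling bounds: left-multiplication by $N^{k-j+1}$ does not increase the sup norm; $|(\theta-\theta^{q^{i}})^{-k-1}|_{\infty}=q^{-(k+1)q^{i}}$; the hypothesis $|u_{e}|_{\infty}\leq q^{s_{e}q/(q-1)}$ gives $|c_{\ell',\ell}^{(i-1)}|_{\infty}\leq q^{(d_{\ell'}-d_{\ell})q^{i}/(q-1)}$; and the induction hypothesis yields $|P_{i-1}E_{\ell'}|_{\infty}\leq q^{-d_{\ell'}q^{i}/(q-1)+d_{1}q/(q-1)}$. The two $d_{\ell'}$-dependent exponents cancel exactly, the binomial coefficient $\binom{k}{j-1}$ forces $k\geq j-1$, and the factor $q^{-(k+1)q^{i}}$ makes $k=j-1$ the dominant term; a direct arithmetic check then recovers precisely the target exponent $(d_{\ell}-j)q^{i}-(d_{\ell}q^{i}-d_{1})q/(q-1)$. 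The main obstacle is this twofold bookkeeping---identifying that only $m=j-1$ contributes in the binomial expansion, and noticing that the $d_{\ell'}$-dependent exponents cancel in the final estimate; both hinge on the precise rank-one structure of the $E_{\ell}$'s and the interplay of the upper-triangular block structure of $E$ with the hypothesis on $|u_{e}|_{\infty}$.
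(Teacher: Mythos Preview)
Your proof is correct and follows essentially the same approach as the paper's. The only difference is cosmetic: the paper cites the recursion for $P_{i+1}N^{d_{\ell}-j}E_{\ell}$ from \cite[3.2.4]{CM17}, whereas you rederive it via the functional equation $(\theta-\theta^{q^{i}})P_{i}+\ad(N)(P_{i})=P_{i-1}E^{(i-1)}$ and the Neumann series for $((\theta-\theta^{q^{i}})+\ad(N))^{-1}$; after expanding $\ad(N)^{k}$ by the binomial formula the two recursions coincide term by term. The remaining steps---identifying that only the $m=j-1$ term survives because of the rank-one structure of $E_{\ell}$ and of the block columns of $E^{(i-1)}$, applying the induction hypothesis at $(i-1,\ell',d_{\ell'})$, observing the cancellation of the $d_{\ell'}$-dependent exponents, and taking $k=j-1$ as the dominant term---are identical in content to the paper's argument.
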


\begin{proof}
Note that the $(d_{1} + \cdots + d_{\ell-1} + 1)$th column of $P_{i} N^{d_{\ell}-j} E_{\ell}$ is the $(d_{1} + \cdots + d_{\ell-1} + j)$th column of $P_{i}$, and the other columns are zero vectors.
When $i = 0$, the inequality holds clearly.
Let $i \geq 1$ and assume that the inequality holds for $i$.
By \cite[3.2.4]{CM17}, we have
\[
P_{i+1} N^{d_{\ell}-j} E_{\ell}
= - \sum_{m=0}^{2d_{1}-2} \dfrac{1}{(\theta^{q^{i+1}}-\theta)^{m+1}} \sum_{n=0}^{m} (-1)^{n} \binom{m}{n} N^{m-n} P_{i} E^{(i)} N^{n+d_{\ell}-j} E_{\ell}.
\]
Note that $E^{(i)} N^{n+d_{\ell}-j} E_{\ell} = 0$ for $n \neq j-1$, and $N^{m-n} = 0$ for $m-n \geq d_{1}$.
Thus we have
\begin{eqnarray*}
P_{i+1} N^{d_{\ell}-j} E_{\ell}
& = & \sum_{m=j-1}^{d_{1}+j-2} \dfrac{(-1)^{j}}{(\theta^{q^{i+1}}-\theta)^{m+1}} \binom{m}{j-1} N^{m-j+1} P_{i} E^{(i)} N^{d_{\ell}-1} E_{\ell} \\
& = & \sum_{m=j-1}^{d_{1}+j-2} \dfrac{(-1)^{j}}{(\theta^{q^{i+1}}-\theta)^{m+1}} \binom{m}{j-1} N^{m-j+1} \sum_{n=1}^{\ell}  (-1)^{\ell-n} P'_{i,\ell,n} \prod_{n \leq e \leq \ell-1} u_{e}^{q^{i}},
\end{eqnarray*}
where $P'_{i,\ell,n}$ is the matrix such that the $(d_{1} + \cdots + d_{\ell-1} + 1)$th column is the $(d_{1} + \cdots + d_{n-1} + d_{n})$th column of $P_{i}$, and the other columns are zero vectors.

By the induction hypothesis, we obtain
\begin{eqnarray*}
\Bigl|P'_{i,\ell,n}  \prod_{n \leq e \leq \ell-1} u_{e}^{q^{i}}  \Bigr|_{\infty}
& \leq & q^{(d_{n}-d_{n}) q^{i} - (d_{n}q^{i}-d_{1}) \frac{q}{q-1}} \cdot  \prod_{n \leq e \leq \ell-1} q^{\frac{s_{e} q}{q-1} \cdot q^{i}}  \\
& = & q^{- (d_{n}q^{i}-d_{1}) \frac{q}{q-1}} \cdot q^{(d_{n}-d_{\ell}) \frac{q^{i+1}}{q-1}} \\
& = & q^{- (d_{\ell}q^{i}-d_{1}) \frac{q}{q-1}}.
\end{eqnarray*}
Therefore we have
\begin{eqnarray*}
|P_{i+1} N^{d_{\ell}-j} E_{\ell}|_{\infty}
& \leq & \max_{j-1 \leq m \leq d_{1}+j-2} \left\{ q^{-(m+1)q^{i+1}} \right\} \cdot q^{- (d_{\ell}q^{i}-d_{1}) \frac{q}{q-1}} \\
& = & q^{- j q^{i+1}} \cdot q^{- (d_{\ell}q^{i}-d_{1}) \frac{q}{q-1}} \\
& = & q^{(d_{\ell}-j) q^{i+1} - (d_{\ell}q^{i+1}-d_{1}) \frac{q}{q-1}}.
\end{eqnarray*}
\end{proof}

\begin{proposition} \label{proposition_conv_log}
Assume $|u_{\ell}|_{\infty} \leq q^{\frac{s_{\ell} q}{q-1}}$ for each $1 \leq \ell < r$.
Take a point $\bx = (x_{m}) \in G(\CC_{\infty})$ such that
\[
|x_{d_{1} + \cdots + d_{\ell-1} + j}|_{\infty} < q^{- (d_{\ell}-j) + \frac{d_{\ell} q}{q-1}}
\]
for each $j, \ell$ with $1 \leq \ell \leq r$ and $1 \leq j \leq d_{\ell}$.
Then $\log_{G}(\bx)$ converges in $\Lie G(\CC_{\infty})$.
\end{proposition}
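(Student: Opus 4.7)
\medskip

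\noindent\textbf{Proposal.} Since $\log_{G}=\sum_{i\ge 0}P_{i}\tau^{i}$, the value $\log_{G}(\bx)$ is by definition the formal sum $\sum_{i\ge 0}P_{i}\bx^{(i)}$, where $\bx^{(i)}:=(x_{m}^{q^{i}})_{m}$. Because $\CC_{\infty}$ is non-archimedean, the convergence reduces to showing
\[
|P_{i}\bx^{(i)}|_{\infty}\;\longrightarrow\;0 \qquad\text{as }i\to\infty.
\]
My plan is to bound, column by column, the quantity $|P_{i}|_{\infty}\cdot |x_{m}|_{\infty}^{q^{i}}$ for each $m=d_{1}+\cdots+d_{\ell-1}+j$ and show that the exponents assembled from Lemma~\ref{lemma_abs_P} and the hypothesis on $\bx$ cancel to a constant, after which the strict inequality in the hypothesis supplies geometric decay.

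\medskip

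\noindent\textbf{Step 1: translate Lemma~\ref{lemma_abs_P} into a column bound for $P_i$.} A direct inspection of $N^{d_{\ell}-j}E_{\ell}$ shows that all of its columns are zero except the $(d_{1}+\cdots+d_{\ell-1}+1)$st column, which equals the standard basis vector $e_{d_{1}+\cdots+d_{\ell-1}+j}$; consequently the only non-zero column of $P_{i}N^{d_{\ell}-j}E_{\ell}$ is precisely the $(d_{1}+\cdots+d_{\ell-1}+j)$th column of $P_{i}$. Lemma~\ref{lemma_abs_P} therefore yields
\[
\bigl|(P_{i})_{\,\ast,\,d_{1}+\cdots+d_{\ell-1}+j}\bigr|_{\infty}
\;\le\;q^{(d_{\ell}-j)q^{i}-(d_{\ell}q^{i}-d_{1})\frac{q}{q-1}},
\]
valid for every $1\le\ell\le r$ and $1\le j\le d_{\ell}$.

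\medskip

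\noindent\textbf{Step 2: combine with the hypothesis on $\bx$.} Fix $\ell,j$ and set $m=d_{1}+\cdots+d_{\ell-1}+j$. By hypothesis there exists a constant $c_{m}<1$ with
\[
|x_{m}|_{\infty}\;\le\;c_{m}\cdot q^{-(d_{\ell}-j)+\frac{d_{\ell}q}{q-1}},
\]
so that
\[
|x_{m}|_{\infty}^{q^{i}}\;\le\;c_{m}^{q^{i}}\cdot q^{-(d_{\ell}-j)q^{i}+\frac{d_{\ell}q}{q-1}q^{i}}.
\]
Multiplying this with the column bound from Step~1, the exponents involving $q^{i}$ cancel exactly:
\[
\bigl|(P_{i})_{\,\ast,\,m}\bigr|_{\infty}\cdot|x_{m}|_{\infty}^{q^{i}}
\;\le\; c_{m}^{q^{i}}\cdot q^{d_{1}\frac{q}{q-1}}.
\]
Since $c_{m}<1$, the right-hand side tends to $0$ as $i\to\infty$.

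\medskip

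\noindent\textbf{Step 3: conclude.} Each coordinate of $P_{i}\bx^{(i)}$ is a sum of at most $d$ terms of the form $(P_{i})_{r,m}x_{m}^{q^{i}}$; by the ultrametric inequality and Step~2, each such coordinate has absolute value at most $\max_{m}c_{m}^{q^{i}}\cdot q^{d_{1}\frac{q}{q-1}}$, which tends to $0$. Hence the series $\sum_{i\ge 0}P_{i}\bx^{(i)}$ converges in $\Lie G(\CC_{\infty})$, which is the claim.

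\medskip

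\noindent\textbf{Where the difficulty lies.} All the heavy lifting is already in Lemma~\ref{lemma_abs_P}, which was established via the recursion on the coefficient matrices of $\log_{G}$ from \cite[3.2.4]{CM17}. Once that lemma is in hand, the only delicate point in the present proposition is recognising that Lemma~\ref{lemma_abs_P} controls \emph{entire columns} of $P_{i}$ (via the column-selector matrix $N^{d_{\ell}-j}E_{\ell}$) and that the radius of convergence in each coordinate must be tuned so that the $q^{i}$-dependent exponents match exactly. The apparent obstacle—that the matched exponents produce only the constant bound $q^{d_{1}q/(q-1)}$, not one tending to zero—is resolved precisely by the \emph{strict} inequality in the hypothesis, which supplies the decaying factor $c_{m}^{q^{i}}$.
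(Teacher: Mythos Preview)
Your proof is correct and follows essentially the same approach as the paper: both use Lemma~\ref{lemma_abs_P} to bound the columns of $P_{i}$, multiply by $|x_{m}|_{\infty}^{q^{i}}$, and observe that the $q^{i}$-dependent exponents cancel to leave only the constant $q^{d_{1}q/(q-1)}$ times a factor strictly less than~$1$ raised to the $q^{i}$th power. The paper presents this in a single compact chain of inequalities, whereas you spell out explicitly the column-selector interpretation of $N^{d_{\ell}-j}E_{\ell}$ (which the paper records at the start of the proof of Lemma~\ref{lemma_abs_P}) and introduce the auxiliary constants $c_{m}$, but the content is the same.
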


\begin{proof}
By Lemma \ref{lemma_abs_P}, we have
\begin{eqnarray*}
|P_{i} \bx^{(i)}|_{\infty}
& \leq & \max_{j, \ell} \left\{ q^{(d_{\ell}-j) q^{i} - (d_{\ell}q^{i}-d_{1}) \frac{q}{q-1}} \cdot |x_{d_{1} + \cdots + d_{\ell-1} + j}|_{\infty}^{q^{i}} \right\} \\
& = & \max_{j, \ell} \left\{ q^{\frac{d_{1} q}{q-1}} \cdot \left( |x_{d_{1} + \cdots + d_{\ell-1} + j}|_{\infty} \Big/ q^{-(d_{\ell}-j) + \frac{d_{\ell} q}{q-1}} \right)^{q^{i}} \right\} \\
& \to & 0 \ \ (i \to \infty).
\end{eqnarray*}
\end{proof}

\begin{theorem} \label{theorem_log_li}
Given any $\fs=(s_{1},\ldots,s_{r})\in \NN^{r}$, we put $\widetilde{\fs}:=(s_{r},\ldots,s_{1})$ and let $\mathbb{D}_{\widetilde{\fs}}''$ be defined in (\ref{Def: Ds''}). Suppose that we have $\bu=(u_{1},\ldots,u_{r})\in (\ok^{\times })^{r}$ for which $\widetilde{\bu}:=(u_{r},\ldots,u_{1})\in \mathbb{D}_{\widetilde{\fs}}''$.  Let $G$ and $\bv$ be defined as above associated to $\fs$ and $\bu$. Then $\log_{G}$ converges $\infty$-adically at $\bv$ and we have the formula
\[
\log_{G} (\bv) =
\begin{array}{rcll}
\ldelim( {15}{4pt}[] & * & \rdelim) {15}{4pt}[] & \rdelim\}{4}{10pt}[$d_{1}$] \\
& \vdots & & \\
& * & & \\
& (-1)^{r-1}\Li_{(s_{r}, \dots, s_{1})}^{\star}(u_{r}, \dots, u_{1}) & & \\
& * & & \rdelim\}{4}{10pt}[$d_{2}$] \\
& \vdots & & \\
& * & & \\
& (-1)^{r-2}\Li_{(s_{r}, \dots, s_{2})}^{\star}(u_{r}, \dots, u_{2}) & & \\
& \vdots & & \vdots \\
& * & & \rdelim\}{4}{10pt}[$d_{r}$] \\
& \vdots & & \\
& * & & \\
& \Li^{\star}_{s_{r}}(u_{r}) & & \\[10pt]
\end{array} \ \ \ \in \Lie G(\CC_{\infty}).
\] In particular, the $(s_{1}+\cdots+s_{r})$th coordinate of $\log_{G}(\bv)$ is $(-1)^{\dep(\fs)-1} \Lis_{\widetilde{\fs}}(\widetilde{\bu})$.

\end{theorem}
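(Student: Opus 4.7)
The plan is to verify convergence of $\log_G(\bv)$ via Proposition~\ref{proposition_conv_log}, then compute the logarithm explicitly coordinate by coordinate. First, the only nonzero coordinates of $\bv$ sit at the positions $k_\ell:=d_1+\cdots+d_\ell$ for $\ell=1,\ldots,r$ (corresponding to $j=d_\ell$ within the $\ell$-th block in the indexing of that proposition), with values $(-1)^{r-\ell}u_\ell u_{\ell+1}\cdots u_r$. The bound required there reduces to $|u_\ell\cdots u_r|_\infty<q^{d_\ell q/(q-1)}$. Unpacking $\widetilde\bu\in\mathbb{D}_{\widetilde\fs}''$ gives $|u_r|_\infty<q^{s_r q/(q-1)}$ together with $|u_j|_\infty\le q^{s_j q/(q-1)}$ for $1\le j\le r-1$; multiplying these bounds and using $d_\ell=s_\ell+\cdots+s_r$ yields the required strict inequality, so $\log_G(\bv)$ converges.

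Next I would write $\log_G=\sum_{i\ge 0}P_i\tau^i$ with $P_0=I_d$, so that
\[
\log_G(\bv)=\sum_{i\ge 0}\sum_{\ell=1}^r(-1)^{r-\ell}(u_\ell\cdots u_r)^{q^i}\,P_i e_{k_\ell},
\]
and the $k_m$-th coordinate reduces to identifying the matrix entries $(P_i)_{k_m,k_\ell}$ for $m\le\ell$. I would carry this out by induction on $i$ using the explicit recursion for $P_{i+1}$ in terms of $P_i$ already appearing in the proof of Lemma~\ref{lemma_abs_P} (derived from $\log_G\circ\rho_t=\partial\rho_t\circ\log_G$, going back to \cite[3.2.4]{CM17}). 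The induction should show that each such entry is a partial CMSPL-type sum in which the outermost summation index is frozen at $i$; summing over $i$ then produces the full series. The signs $(-1)^{r-\ell}$ in $\bv$ combine with the factors $(-1)^{m-\ell}\prod_{e=m}^{\ell-1}u_e$ in the off-diagonal entries of $E$ so that, after regrouping, the contributions from different $\ell$ assemble into precisely $(-1)^{r-m}\Li^\star_{(s_r,\ldots,s_m)}(u_r,\ldots,u_m)$; the stated value of the $(s_1+\cdots+s_r)$-th coordinate is then the case $m=1$.

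The main obstacle will be the combinatorial bookkeeping needed to verify that the summation indices assemble with the \emph{nonstrict} condition $i_1\ge i_2\ge\cdots\ge 0$ characteristic of $\Li^\star$ rather than the strict one $i_1>i_2>\cdots$ defining $\Li$; this requires careful tracking of how cross-terms at each step of the recursion contribute the "equal-index" summands. A potentially cleaner alternative is induction on the depth $r$: the block-upper-triangular shape of $E$ exhibits $\bC^{\otimes d_1}$ as a sub-$t$-module of $G$ with quotient isomorphic to $G_{(s_2,\ldots,s_r),(u_2,\ldots,u_r)}$, so the functoriality~\eqref{E:logFunctorial} of $\log$ transports the formula at coordinates $k_2,\ldots,k_r$ from depth $r-1$ to depth $r$; only the top coordinate $k_1$ then requires a direct matching of the power-series expansion of the top entry of $\log_G(\bv)$ against the defining series of $\Li^\star_{\widetilde\fs}(\widetilde\bu)$.
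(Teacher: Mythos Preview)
Your proposal is correct and follows essentially the same approach as the paper. The convergence argument matches exactly, and for the formula the paper simply defers to \cite[Thm.~3.3.3]{CM17} (replacing $v$-adic by $\infty$-adic convergence); your sketched induction on $i$ via the recursion from \cite[3.2.4]{CM17} is precisely the computation carried out there, so there is no substantive difference.
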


\begin{proof}
For each $1 \leq \ell \leq r$, the $(d_{1} + \cdots + d_{\ell-1} + d_{\ell})$th component of $\bv$ is $(-1)^{r-\ell} u_{\ell} u_{\ell+1} \cdots u_{r}$, and we have
\[
|(-1)^{r-\ell} u_{\ell} u_{\ell+1} \cdots u_{r}|_{\infty}
< q^{\frac{s_{\ell} q}{q-1}} \cdot q^{\frac{s_{\ell+1} q}{q-1}} \cdot \cdots \cdot q^{\frac{s_{r} q}{q-1}}
= q^{\frac{d_{\ell} q}{q-1}}
= q^{- (d_{\ell} - d_{\ell}) + \frac{d_{\ell} q}{q-1}}.
\]
Thus $\log_{G}(\bv)$ converges $\infty$-adically by Proposition \ref{proposition_conv_log}.

Arguments  proving the second assertion are entirely the same as the calculations in the proof of \cite[3.3.3]{CM17}, where we just change the $v$-adic convergence to $\infty$-adic convergence. 
\end{proof}

\begin{remark}
 We mention that all other coordinates of $\log_{G} (\bv)$ can be explicitly written down in terms of Taylor coefficients of $t$-motivic CMSPL's in~\cite{CGM19}.
\end{remark}

\section{Proof of Theorem~\ref{Thm: Introduction}}\label{Sec: Proof of Main Thm}

\subsection{Logarithmic interpretation and formulae for MZV's via CMPL's}\label{Sec: Formulae for MZV's via CMPL's}

When $r=1$ and $\fs=1\in \NN$, the series (\ref{E:Def of CMPL}) is called the Carlitz logarithm, which is the formal inverse of the exponential map of the Carlitz module $\bC$. For $r=1$ and any $s\in \NN$, the series $\Li_{s}$ in  (\ref{E:Def of CMPL})   is called the $s$th Carlitz polylogarithm studied in \cite{AT90}.  Unlike the classical case where there is a simple identity between $\zeta(\fs)$ and a particular specialization of a classical multiple polylogarithm, $\zeta_{A}(\fs)$ is in fact a $k$-linear combination of $\Li_{\fs}$ at some integral points, which will be reviewed in the following section. It turns out that such an identity for $\zeta_{A}(\fs)$ is a crucial connection that enables us to give a logarithmic interpretation for $\zeta_{A}(\fs)$ in Theorem~\ref{Thm: Introduction}.

\begin{remark}
For recent advances of transcendence theory for CMPL's, see~\cite{CY07, M17}.
\end{remark}

We now recall the Carlitz factorials. We set $D_{0}:=1$, and $D_{i}:=\prod_{j=0}^{i-1}(\theta^{q^{i}} -\theta^{q^{j}})\in A$ for $i \in \NN$.  Given a non-negative integer $n$, we express $n$ as $n=\sum_{i\geq 0 }n_{i}q^{i}$ for $0\leq n_{i}\leq q-1$. The Carlitz factorial is defined as 
\begin{equation}\label{E:Def of Gamma}
\Gamma_{n+1}:=\prod_{i}D_{i}^{n_{i}} \in A .  
\end{equation}

The aim of this section is to prove the following theorem.
\begin{theorem}\label{Thm: Infty-adic LogMZV}
Given any $r$-tuple $\fs=(s_{1},\ldots,s_{r})\in \NN^{r}$, we put $n:=\wt(\fs)$. We explicitly construct a uniformizable $t$-module $G_{\fs}$ that is defined over $k$, a special point $\bv_{\fs} \in G_{\fs}(k)$ and a vector $Z_{\fs}\in \Lie G_{\fs}(\CC_{\infty})$ so that 
\begin{itemize}
\item[(a)] $\Gamma_{s_{1}}\cdots \Gamma_{s_{r}}\zeta_{A}(\fs)$ occurs as the $n$th coordinate of $Z_{\fs}$. 
\item[(b)] $\exp_{G_{\fs}}(Z_{\fs})=\bv_{\fs}$.
\end{itemize}
\end{theorem}

To introduce the formula of $\zeta_{A}(\fs)$ in terms of $\Li_{\fs}$, we need to review the Anderson-Thakur polynomials~\cite{AT90, AT09}.  Let $t$ be an independent variable from $\theta$. We put $F_{0}:=1$ and define polynomials $F_{i}\in A[t]$ for $i \in \NN$ by the product
\[
F_{i}=\prod_{j=1}^{i}\left( t^{q^{i}}-\theta^{q^{j}} \right).
\]  We then define  the sequence of Anderson-Thakur polynomials $H_{n}\in A[t]$ (for non-negative integers $n$) by the generating function identity
\[
\left( 1-\sum_{i=0}^{\infty} \frac{  F_{i}  }{ D_{i}|_{\theta=t}} x^{q^{i}}  \right)^{-1}=\sum_{n=0}^{\infty} \frac{H_{n}}{\Gamma_{n+1}|_{\theta=t}} x^{n},
\] and note that they satisfy the following important interpolation formula~\cite[(3.7.4)]{AT90}. For integers $d\geq 0$ and $s \geq1$,  we have
\begin{equation}\label{E: Interpolation}
\left({H_{s-1}}^{(d)}\right)\Bigr|_{t=\theta}=\Gamma_{s}\cdot S_{d}(s)\cdot L_{d}^{s},
\end{equation}
where $S_{d}(s)$ is the sum of the following reciprocal polynomials
\[ S_{d}(s):=\sum_{a\in A_{d,+}} \frac{1}{a^{s}}\in k,\]where $A_{d,+}$ is the set of monic polynomials of degree $d$ in $A$
and $L_{d}$ is defined in Sec.~\ref{Subsec: Gu}.
Define the sup-norm $\| f\|:={\rm{max}}_{i}\left\{ |a_{i}|_{\infty}\right\} $ for polynomials $f=\sum_{i}a_{i}t^{i}\in \CC_{\infty}[t]$, and note further that the Anderson-Thakur polynomials have the  following property 
\begin{equation}\label{E: Coeff bound for H_n}
\| H_{n-1}(t) \| < |\theta|_{\infty}^{\frac{nq}{q-1}}
\end{equation}
for every $n\in \NN$.

\begin{remark}
The bound above comes from \cite[(3.7.3)]{AT90}. However, we shall mention about the difference of notation. Our $H_{n}(t)$ is exactly the same as $H_{n}(y,T)$ in \cite{AT90} replacing $y$ by $\theta$ and replacing $T$ by $t$. One can compare with \cite{AT09}, where their $T$ is referred to our $t$ and their $t$ is referred to our $\theta$. 
\end{remark}

In what follows, we fix an $r$-tuple of positive integers $\fs=(s_{1},\ldots,s_{r})\in \NN^{r}$. For each $1\leq i\leq r$, we expand the Anderson-Thakur polynomial $H_{s_{i}-1}(t)\in A[t]$ as
\begin{equation}\label{E:expansionHs}
H_{s_{i}-1}(t)=\sum_{j=0}^{m_{i}} u_{ij}t^{j},
\end{equation}
where $u_{ij}\in A$ with $u_{im_{i}} \neq 0$ and by (\ref{E: Coeff bound for H_n}) it satisfies
\begin{equation}\label{E: bound for u_ij}
 |u_{ij}|_{\infty}<q^{\frac{s_{i}q}{q-1}}\hbox{ for }j=0,\ldots,m_{i}  .
\end{equation}

We define
\[ J_{\fs}:=\left\{0,1,\ldots,m_{1} \right\} \times \cdots \times \left\{0,1,\ldots,m_{r} \right\}     .\]
For each $\bj=(j_{1},\ldots,j_{r})\in J_{\fs}$, we set
\[ \bu_{\bj}:=(u_{1j_{1}},\ldots,u_{rj_{r}})\in A^{r},   \]
and
\[ a_{\bj}:=a_{\bj}(t):=t^{j_{1}+\cdots+j_{r}}    .\]
Note that by (\ref{E: bound for u_ij}), we have $\bu_{\bj}\in \mathbb{D}_{\fs}'$ for every $\bj\in J_{\fs}$.

Set $\Gamma_{\fs}:=\Gamma_{s_{1}}\cdots\Gamma_{s_{r}}\in A$. The first author of the present paper established the following formula that extends  the work of Anderson-Thakur~\cite{AT90} for $r=1$.
\begin{theorem} \label{T:Chang} {\rm{(\cite[Thm.~5.5.2]{C14})}}
For each $\fs=(s_{1},\ldots,s_{r})\in \NN^{r}$, let $J_{\fs}$, $a_{\bj}$ and $\bu_{\bj}$ be defined as above. Then the following identity holds.
\[\Gamma_{\fs} \zeta_{A}(\fs)=\sum_{\bj \in J_{\fs}}a_{\bj}(\theta)\Li_{\fs}(\bu_{\bj}).  \]
\end{theorem}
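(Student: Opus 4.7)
The plan is to prove the identity by unfolding both sides as multiple series indexed by tuples $(i_{1},\ldots,i_{r})$ with $i_{1}>\cdots>i_{r}\geq 0$ and matching them term by term. The essential analytic input is the Anderson--Thakur interpolation formula for power sums from~\cite{AT90} (cited in the excerpt as property~(1)):
\[
\Gamma_{s}\cdot S_{d}(s)
\;=\; \frac{H_{s-1}^{(d)}(\theta)}{L_{d}^{s}},
\qquad
S_{d}(s):=\sum_{\substack{a\in A\text{ monic}\\ \deg a=d}}\frac{1}{a^{s}},
\]
where $H_{s-1}^{(d)}(t):=\sum_{j}u_{s-1,j}^{q^{d}}t^{j}$ denotes the $d$-fold Frobenius twist of $H_{s-1}(t)=\sum_{j}u_{s-1,j}t^{j}$ applied coefficientwise.

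First I would rewrite $\zeta_{A}(\fs)$ in terms of nested power sums and invoke AT interpolation on each factor. Collecting the summation in~(\ref{E:Def of MZV's}) according to the degrees of $a_{1},\ldots,a_{r}$ and then multiplying through by $\Gamma_{\fs}=\Gamma_{s_{1}}\cdots\Gamma_{s_{r}}$ gives
\[
\Gamma_{\fs}\,\zeta_{A}(\fs)
=\Gamma_{s_{1}}\cdots\Gamma_{s_{r}}\sum_{i_{1}>\cdots>i_{r}\geq 0}\prod_{\ell=1}^{r}S_{i_{\ell}}(s_{\ell})
=\sum_{i_{1}>\cdots>i_{r}\geq 0}\prod_{\ell=1}^{r}\frac{H_{s_{\ell}-1}^{(i_{\ell})}(\theta)}{L_{i_{\ell}}^{s_{\ell}}}.
\]

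Next I would unfold the right-hand side of the claim using the definition~(\ref{E:Def of CMPL}) of $\Li_{\fs}$. The bound~(\ref{E: bound for u_ij}) guarantees $\bu_{\bj}\in\mathbb{D}_{\fs}'$ for every $\bj\in J_{\fs}$, so each $\Li_{\fs}(\bu_{\bj})$ converges $\infty$-adically by Remark~\ref{Rem:NonvanDomainLi}. Because the sum over $\bj\in J_{\fs}$ is finite, swapping it with the $(i_{\ell})$-series is legitimate, and one computes
\[
\sum_{\bj\in J_{\fs}} a_{\bj}(\theta)\Li_{\fs}(\bu_{\bj})
=\sum_{i_{1}>\cdots>i_{r}\geq 0}\prod_{\ell=1}^{r}\frac{1}{L_{i_{\ell}}^{s_{\ell}}}\Bigl(\sum_{j_{\ell}=0}^{m_{\ell}}u_{\ell,j_{\ell}}^{q^{i_{\ell}}}\theta^{j_{\ell}}\Bigr)
=\sum_{i_{1}>\cdots>i_{r}\geq 0}\prod_{\ell=1}^{r}\frac{H_{s_{\ell}-1}^{(i_{\ell})}(\theta)}{L_{i_{\ell}}^{s_{\ell}}},
\]
which agrees with the previous display and finishes the proof.

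The main obstacle in this approach is the bookkeeping of convergence needed to justify the various rearrangements. In the non-archimedean setting this boils down to uniform size estimates produced by~(\ref{E: bound for u_ij}) together with the known valuation $v_{\infty}(L_{i})=-q(q^{i}-1)/(q-1)$; one must check that the general term $\theta^{j_{1}+\cdots+j_{r}}u_{1,j_{1}}^{q^{i_{1}}}\cdots u_{r,j_{r}}^{q^{i_{r}}}/\prod L_{i_{\ell}}^{s_{\ell}}$ tends uniformly to $0$ as $i_{1}\to\infty$ over the admissible tuples. Once that is in place, the identity is forced by the AT interpolation formula, which I take as a black box from~\cite{AT90}.
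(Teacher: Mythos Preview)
The paper does not supply its own proof of this theorem: it is quoted verbatim as \cite[Thm.~5.5.2]{C14} and used as input. Your argument is correct and is precisely the proof given in \cite{C14}: group the defining series for $\zeta_{A}(\fs)$ by degrees, apply the Anderson--Thakur interpolation $\Gamma_{s}S_{d}(s)=H_{s-1}^{(d)}(\theta)/L_{d}^{s}$ to each factor, and then recognize the resulting product $\prod_{\ell}\bigl(\sum_{j_{\ell}}u_{\ell j_{\ell}}^{q^{i_{\ell}}}\theta^{j_{\ell}}\bigr)$ as exactly what one obtains after expanding $\sum_{\bj}a_{\bj}(\theta)\Li_{\fs}(\bu_{\bj})$ and interchanging the finite sum over $\bj$ with the CMPL series. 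One small notational wrinkle: you write $u_{s-1,j}$ in the interpolation formula but $u_{\ell,j_{\ell}}$ later; to match the paper's convention~(\ref{E:expansionHs}) both should be $u_{\ell j}$, the coefficients of $H_{s_{\ell}-1}(t)$.
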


\begin{remark}
The identity above is based on the interpolation formula~\eqref{E: Interpolation}. Thakur~\cite{T92} initiated the study of generalizing the power sum $S_{d}(s)$ to general ${\bf{A}}$, which is the ring of regular functions on a smooth, projective and geometrically irreducible curve over $\FF_{q}$ regular away from a fixed closed point. We mention that a good analogue of the Anderson-Thakur polynomials for ${\bf{A}}$ satisfying an interpolation formula involving power sums such as~\eqref{E: Interpolation} will be very helpful for the study of MZV's over ${\bf{A}}$, but so far such an analogue is not known yet.
\end{remark}

In the following section, we have to express the right hand side of the identity in Theorem~\ref{T:Chang} in terms of CMSPL's since such a formulation plays a crucial role in the proof of Theorem~\ref{Thm: Introduction}. 

\subsection{Formulae for MZV's via CMSPL's} To express CMPL in terms of CMSPL's, one just needs the inclusion-exclusion principle on the set 
\[\left\{ i_{1}>\cdots>i_{r}\geq 0\right\} .\]
We take a simple example for $r=2$, which would simply allow one to understand what we do in the more general setting. Since we have
\[ \left\{i_{1}>i_{2}\geq 0 \right\}=\left\{ i_{1}\geq i_{2} \geq 0 \right\} \backslash \left\{ i_{1}=i_{2}\geq 0 \right\},  \]
it follows that
\[\Li_{(s_{1},s_{2})}(z_{1},z_{2}) =\Li_{(s_{1},s_{2})}^{\star}(z_{1},z_{2})-\Li_{s_{1}+s_{2}} ^{\star}(z_{1}+z_{2}) .\]
So one can obtain that $\Li_{\fs}$ can be expressed as a linear combination of CMSPL's, which is presented in Proposition~\ref{P:Li as Lis}. 

In what follows, the main target is to express $\Gamma_{\fs} \zeta_{A}(\fs)$ explicitly as an $A$-linear combination of some CMSPL's at certain integral points. The details of the procedure below are to explain that $\fs_{\ell}$, $b_{\ell}$ and $\bu_{\ell}$ in Theorem~\ref{T:MZV as Lis} can be written down explicitly, and that will explain why the constructions of $G_{\fs}$ and $\bv_{\fs}$ in Theorem~\ref{Thm: Introduction} are explicit. So we suggest the reader to skip  ahead to Theorem~\ref{T:MZV as Lis}  unless one needs explicit examples of $G_{\fs}$ and $\bv_{\fs}$.

 An index of depth $r$ is defined to be an $r$-tuple $\fs=(s_{1},\ldots,s_{r})\in \NN^{r}$.
\begin{definition}
Let $\fs=(s_{1},\ldots,s_{r})\in\NN^{r}$ be an index of depth $r>1$. Let $S$ be the set consisting of the two symbols '$,$' (comma) and '$+$' (addition) and $S^{\times}$ be the set consisting of the two symbols \lq$,$\rq \ and \lq $\times$\rq \ (multiplication).

\begin{enumerate}
\item We define a map $\lambda:=\left(\bw\mapsto \bw^{\times} \right):S^{r-1}\rightarrow {S^{\times}}^{r-1} $ by leaving \lq$,$\rq \ be fixed and changing \lq$+$\rq \ to \lq$\times$\rq. That is, if $\bw=(\bw_{1},\ldots,\bw_{r-1})$, then $\bw_{i}^{\times}:=$ \lq$,$\rq  \ if $\bw_{i}=$ \lq$,$\rq;  otherwise $\bw_{i}^{\times}:=$ \lq$\times$\rq.

\item For  any  $\bw=(\bw_{1},\ldots,\bw_{r-1})\in S^{r-1}$, we define $\bw(\fs):=(s_{1}\bw_{1}s_{2}\bw_{2}\cdots \bw_{r-1}s_{r})$. That is, $\bw(\fs)$ is a tuple of positive integers obtained from $(s_{1},\ldots,s_{r})$ by inserting the symbol $\bw_{i}$ between $s_{i}$ and $s_{i+1}$ for $i=1,\ldots,r-1$.

 \item For  any  $\bw=(\bw_{1},\ldots,\bw_{r-1})\in S^{r-1}$ and $\bu=(u_{1},\ldots,u_{r})\in \bar{k}^{r}$, we define $\bw^{\times}(\bu):=(u_{1}\bw_{1}^{\times}u_{2}\bw_{2}^{\times}\cdots \bw_{r-1}^{\times}u_{r})$. That is, $\bw^{\times}(\bu)$ is the tuple of algebraic elements over $k$ obtained from $(u_{1},\ldots,u_{r})$ by inserting the symbol $\bw_{i}^{\times}$ between $u_{i}$ and $u_{i+1}$ for $i=1,\ldots,r-1$.
\end{enumerate}
\end{definition}

For example, let $\bw=(\bw_1,\bw_2)$ with $\bw_1=$ `$,$' and $\bw_2=$ `$+$'. Then for $\fs=(s_{1},s_{2},s_{3})\in \NN^{3}$, we have
\[ \bw(\fs)=(s_{1},s_{2}+s_{3}) .\]
Furthermore, for $\bu=(u_{1},u_{2},u_{3})\in\ok^{3}$, we have

\[ \bw^{\times}(\bu)=(u_{1},u_{2}u_{3})   .\]
Finally, we define $\nu(\bw)$ to be the number of `$+$' in $\bw$.

\begin{proposition}\label{Rem: Conv at w star}
Fix an index $\fs\in \NN^{r}$ with $r>1$. Then for any $\bu\in \mathbb{D}_{\fs}'$ (resp.\ $\bu \in \mathbb{D}_{\fs}''$) and $\bw\in S^{r-1}$, we have that $\bw^{\times}(\bu)\in \mathbb{D}_{\bw(\fs)}'$ (resp.\ $\bw^{\times}(\bu) \in \mathbb{D}_{\bw(\fs)}''$). In particular, $\Li_{\bw(\fs)}^{\star}(\bw^{\times}(\bu))$ converges by Remark~\ref{Rem:NonvanDomainLi}.
\end{proposition}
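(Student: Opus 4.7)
The plan is to reduce the statement to a direct size estimate on block products, using the fact that the absolute value on $\CC_{\infty}$ is multiplicative. First I would unpack the combinatorics: a word $\bw=(\bw_{1},\ldots,\bw_{r-1})\in S^{r-1}$ determines a partition of $\{1,\ldots,r\}$ into $m$ consecutive blocks $G_{1},\ldots,G_{m}$, where the positions of the \lq$,$\rq\ symbols in $\bw$ are exactly the block separators and the positions of the \lq$+$\rq\ symbols lie inside the blocks. With this reading, if we write $S_{j}:=\sum_{i\in G_{j}}s_{i}$ and $U_{j}:=\prod_{i\in G_{j}}u_{i}$, then by the definitions of $\bw(\fs)$ and $\bw^{\times}(\bu)$ we have
\[
\bw(\fs)=(S_{1},\ldots,S_{m}),\qquad \bw^{\times}(\bu)=(U_{1},\ldots,U_{m}).
\]

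Next I would verify the two containments separately. Assume first $\bu\in\mathbb{D}_{\fs}'$, so $|u_{i}|_{\infty}<q^{s_{i}q/(q-1)}$ for every $i$. Multiplicativity of $|\cdot|_{\infty}$ gives
\[
|U_{j}|_{\infty}=\prod_{i\in G_{j}}|u_{i}|_{\infty}<\prod_{i\in G_{j}}q^{s_{i}q/(q-1)}=q^{S_{j}q/(q-1)},
\]
for each $1\leq j\leq m$, which is exactly the condition $\bw^{\times}(\bu)\in\mathbb{D}_{\bw(\fs)}'$.

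For the second assertion, assume $\bu\in\mathbb{D}_{\fs}''$, so $|u_{1}|_{\infty}<q^{s_{1}q/(q-1)}$ and $|u_{i}|_{\infty}\leq q^{s_{i}q/(q-1)}$ for $i\geq 2$. Since $1\in G_{1}$, the product defining $U_{1}$ contains one strict factor and so the resulting inequality stays strict, yielding $|U_{1}|_{\infty}<q^{S_{1}q/(q-1)}$. For $j\geq 2$ the block $G_{j}$ contains no index equal to $1$, hence every factor satisfies only the non-strict bound, producing $|U_{j}|_{\infty}\leq q^{S_{j}q/(q-1)}$. These inequalities match exactly the definition of $\mathbb{D}_{\bw(\fs)}''$ recalled in \eqref{Def: Ds''}. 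The final clause on convergence of $\Li^{\star}_{\bw(\fs)}(\bw^{\times}(\bu))$ is then immediate from Remark~\ref{Rem:NonvanDomainLi}. The argument is entirely bookkeeping, so I do not expect a genuine obstacle; the only point that requires attention is to be sure that the strict factor $|u_{1}|_{\infty}<q^{s_{1}q/(q-1)}$ lies in $G_{1}$, which is forced by the definition of the partition.
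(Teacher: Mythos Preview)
Your argument is correct and is essentially the detailed version of the paper's own proof, which consists of the single line ``The assertion follows immediately from the non-archimedean property of $|\cdot|_{\infty}$.'' Your block decomposition and the multiplicativity estimate $|U_{j}|_{\infty}=\prod_{i\in G_{j}}|u_{i}|_{\infty}$ are exactly what that sentence unpacks to; the only minor remark is that the relevant property is really multiplicativity of $|\cdot|_{\infty}$ rather than the ultrametric inequality per se.
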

\begin{proof}
The assertion follows immediately from the non-archimedean property of $|\cdot|_{\infty}$.
\end{proof}

In order to make our formula of MZV's convenient for use, for $r=1$ we simply define $S^{r-1}=S^{0}:=\left\{ identity\right\}$, and  denote by 
$P(s)=s$, $P^{\times }(u)=u$ and $\nu(P):=0$ for $P\in S^{0}$.

Applying the inclusion-exclusion principle on the set $\left\{ i_{1}>\cdots >i_{r}\geq 0 \right\}$, we have the following identity. 

\begin{proposition}\label{P:Li as Lis}
 Let $r $ be a positive integer, $\fs\in \NN^{r}$ be an index and $z_{1},\ldots,z_{r}$ be  $r$ independent variables. Putting $\bz=(z_{1},\ldots,z_{r})$. Then the following identity holds:
\[
\Li_{\fs}(\bz) = \sum_{P\in S^{r-1}} (-1)^{\nu(P)} \Lis_{P(\fs)}(P^{\times}(\bz)).
\]

\end{proposition}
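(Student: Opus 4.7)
The plan is to prove the identity by an inclusion--exclusion argument on the strict inequalities defining the summation range of $\Li_{\fs}(\bz)$. The starting observation is the elementary identity
\[
\mathbf{1}[i_1 > i_2 > \cdots > i_r \geq 0]
\;=\; \mathbf{1}[i_1 \geq i_2 \geq \cdots \geq i_r \geq 0]\cdot \prod_{j=1}^{r-1}\bigl(1 - \mathbf{1}[i_j = i_{j+1}]\bigr),
\]
which expresses the support of $\Li_{\fs}$ as the support of $\Lis_{\fs}$ with the diagonals removed.

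Concretely, I would expand the product over subsets $S\subseteq\{1,\ldots,r-1\}$ to get
\[
\mathbf{1}[i_1 > \cdots > i_r \geq 0]
\;=\; \sum_{S\subseteq\{1,\ldots,r-1\}} (-1)^{|S|}\,\mathbf{1}\bigl[i_1 \geq \cdots \geq i_r \geq 0 \text{ and } i_j=i_{j+1}\ \forall j\in S\bigr].
\]
Multiplying by the general term $z_1^{q^{i_1}}\cdots z_r^{q^{i_r}}/(L_{i_1}^{s_1}\cdots L_{i_r}^{s_r})$ and summing over all $(i_1,\ldots,i_r)\in\ZZ_{\geq 0}^{r}$ then yields
\[
\Li_{\fs}(\bz)\;=\;\sum_{S\subseteq\{1,\ldots,r-1\}} (-1)^{|S|}\,T_S(\bz),
\]
where $T_S(\bz)$ is the sum over weakly decreasing tuples in which $i_j=i_{j+1}$ is enforced for every $j\in S$ and only $i_j\geq i_{j+1}$ is required for $j\notin S$.

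The main bookkeeping step is to identify $T_S(\bz)$ with $\Lis_{P_S(\fs)}(P_S^{\times}(\bz))$, where $P_S\in S^{r-1}$ carries \lq$+$\rq at each position in $S$ and \lq$,$\rq elsewhere. The enforced equalities from $S$ partition $\{1,\ldots,r\}$ into maximal consecutive blocks; within each block $B$ all of the $i_j$ coincide with a common value $i$, so the monomial factors collapse as $\prod_{j\in B} z_j^{q^{i}}/L_{i}^{s_j} = \bigl(\prod_{j\in B} z_j\bigr)^{q^{i}}/L_i^{\sum_{j\in B} s_j}$, while between distinct blocks one retains a weak inequality $\geq$. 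This is precisely the definition of the CMSPL with parameters $(P_S(\fs),P_S^{\times}(\bz))$. Since $|S|=\nu(P_S)$ and $S\mapsto P_S$ is a bijection $2^{\{1,\ldots,r-1\}}\to S^{r-1}$, reindexing the sum gives the claim. Convergence of every summand on the right-hand side is supplied by Proposition~\ref{Rem: Conv at w star}, so the rearrangements are legitimate. The only delicate point is the block-collapse verification in the identification of $T_S$ — in particular checking that a run of $m$ consecutive \lq$+$\rq in $P_S$ correctly merges $m+1$ summation indices into a single one with summed exponent and product variable — and this is a direct combinatorial check rather than a genuine obstacle.
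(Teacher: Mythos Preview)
Your proof is correct and is precisely the inclusion--exclusion argument the paper invokes (the paper states only ``Applying the inclusion-exclusion principle on the set $\{i_1>\cdots>i_r\geq 0\}$'' without further detail). One small remark: since the proposition is stated for independent variables $z_1,\ldots,z_r$, the identity is really a formal power series identity and the appeal to Proposition~\ref{Rem: Conv at w star} for convergence is not needed here, though it does no harm.
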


\begin{remark}
The similar statement of Proposition \ref{P:Li as Lis} for classical MZV's can be seen in \cite{Ya13}.
\end{remark}

Recall that  the special points $\bu_{\bj}$ in Theorem~\ref{T:Chang} belong to $\mathbb{D}_{\fs}'$ for every $\bj\in J_{\fs}$, and so $\Lis_{P(\fs)}(P^{\times}(\bu_{\ell}))$ converges by Proposition~\ref{Rem: Conv at w star}.  Combining Theorem~\ref{T:Chang} and the proposition above, we have the following expression for $\zeta_{A}(\fs)$ in terms of CMSPL's.

\begin{theorem}\label{T:MZV as Lis}
For any depth $r$ index $\fs\in \NN^{r}$, there are explicit tuples $\fs_{\ell}\in \NN^{\tiny{\rm{dep}(\fs_{\ell})}}$ with $\tiny{\rm{wt}(\fs_{\ell})}=\tiny{\rm{wt}(\fs)}$, $\tiny{\rm{dep}(\fs_{\ell})}\leq r$, explicit coefficients $b_{\ell}\in A$ and vectors $\bu_{\ell}\in A^{\tiny{\rm{dep}(\fs_{\ell})}}$ so that
\[\Gamma_{\fs} \zeta_{A}(\fs)=\sum_{\ell}b_{\ell} \cdot (-1)^{\tiny{\rm{dep}(\fs_{\ell})}-1} \Lis_{\fs_{\ell}}(\bu_{\ell}).\]
\end{theorem}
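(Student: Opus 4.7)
The plan is to derive this identity by directly chaining together Theorem~\ref{T:Chang} with Proposition~\ref{P:Li as Lis}, keeping careful track of the signs. First I would start from the formula of Theorem~\ref{T:Chang},
\[
\Gamma_{\fs}\zeta_{A}(\fs)=\sum_{\bj\in J_{\fs}} a_{\bj}(\theta)\,\Li_{\fs}(\bu_{\bj}),
\]
which already exhibits $\Gamma_{\fs}\zeta_{A}(\fs)$ as an $A$-linear combination of CMPL's at integral points $\bu_{\bj}\in A^{r}$ with coefficients $a_{\bj}(\theta)=\theta^{j_{1}+\cdots+j_{r}}\in A$. Since Proposition~\ref{P:Li as Lis} converts each such $\Li_{\fs}(\bu_{\bj})$ into a signed sum of CMSPL's indexed by the finite set $S^{r-1}$, substituting it yields
\[
\Gamma_{\fs}\zeta_{A}(\fs)=\sum_{\bj\in J_{\fs}}\sum_{P\in S^{r-1}} (-1)^{\nu(P)}a_{\bj}(\theta)\,\Lis_{P(\fs)}(P^{\times}(\bu_{\bj})).
\]

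Next I would reconcile the sign $(-1)^{\nu(P)}$ with the required sign $(-1)^{\dep(\fs_{\ell})-1}$. The key combinatorial identity is $\dep(P(\fs))+\nu(P)=r$, since each occurrence of the symbol \lq$+$\rq\ in $P$ merges two consecutive entries of $\fs$ and therefore drops the depth by one. This gives
\[
(-1)^{\nu(P)}=(-1)^{r-\dep(P(\fs))}=(-1)^{r+1}\cdot(-1)^{\dep(P(\fs))-1}.
\]
Setting $\ell:=(\bj,P)\in J_{\fs}\times S^{r-1}$, I would then define $\fs_{\ell}:=P(\fs)$, $\bu_{\ell}:=P^{\times}(\bu_{\bj})$, and $b_{\ell}:=(-1)^{r+1}a_{\bj}(\theta)\in A$, which produces the desired expression.

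Finally I would verify the remaining bookkeeping conditions: the weight equality $\wt(\fs_{\ell})=\wt(\fs)$ holds since the total sum of entries is preserved under inserting additions; the depth bound $\dep(\fs_{\ell})\leq r$ is immediate; and $\bu_{\ell}\in A^{\dep(\fs_{\ell})}$ follows from $\bu_{\bj}\in A^{r}$ together with the closure of $A$ under multiplication, which is all that $P^{\times}$ uses on the components. Explicitness of $\fs_{\ell}$, $b_{\ell}$, $\bu_{\ell}$ is inherited from the explicit nature of Anderson--Thakur's polynomials $H_{s_{i}-1}(t)$ expanded in~(\ref{E:expansionHs}), together with the combinatorial bijection with $J_{\fs}\times S^{r-1}$. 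There is no real obstacle in this proof: the work has been done upstream in Theorem~\ref{T:Chang} and Proposition~\ref{P:Li as Lis}, and the only thing to watch is the uniform sign $(-1)^{r+1}$ arising from the depth/sign dictionary, which can be absorbed into $b_{\ell}$.
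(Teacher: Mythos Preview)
Your proposal is correct and matches the paper's own argument essentially line for line: the paper obtains the theorem precisely by combining Theorem~\ref{T:Chang} with Proposition~\ref{P:Li as Lis} and using the identity $\nu(P)+\dep(P(\fs))=r$ to rewrite the sign, arriving at $b_{\ell}=(-1)^{r-1}a_{\bj}(\theta)$ (which equals your $(-1)^{r+1}a_{\bj}(\theta)$). The explicit description of the triples $(b_{\ell},\fs_{\ell},\bu_{\ell})$ in Remark~\ref{Rem: the set cT} is exactly your parametrization by $(\bj,P)\in J_{\fs}\times S^{r-1}$.
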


\begin{remark}\label{Rem: the set cT}
Precisely, we have
\begin{eqnarray*}
\Gamma_{\fs} \zeta_{A}(\fs)
&=& \sum_{\bj \in J_{\fs}} a_{\bj}(\theta) \Li_{\fs}(\bu_{\bj}) \\
&=& \sum_{\bj \in J_{\fs}} a_{\bj}(\theta) \sum_{P \in S^{r-1}} (-1)^{\nu(P)} \Lis_{P(\fs)}(P^{\times}(\bu_{\bj})) \\
&=& \sum_{\bj \in J_{\fs}} \sum_{P \in S^{r-1}} (-1)^{r-1} a_{\bj}(\theta) \cdot (-1)^{\dep(P(\fs))-1} \Lis_{P(\fs)}(P^{\times}(\bu_{\bj})),
\end{eqnarray*}
where we use the equality $\nu(P) + \dep(P(\fs)) = r$ for each $P \in S^{r-1}$. Let $T$ be the cardinality of the terms in the right hand side of the identity above. 
Then for convenience we renumber the indices $\ell$ of  $(b_{\ell}, \fs_{\ell}, \bu_{\ell})$ for which
\begin{equation}\label{E:triples}
\{ (b_{\ell}, \fs_{\ell}, \bu_{\ell}) | 1 \leq \ell \leq T \} = \{ ( (-1)^{r-1} a_{\bj}(\theta), P(\fs), P^{\times}(\bu_{\bj})) | \bj \in J_{\fs}, P \in S^{r-1} \},
\end{equation}
and $\dep(\fs_{\ell}) = 1$  for $1 \leq \ell \leq s$, and $\dep(\fs_{\ell}) \geq 2$  for $s+1 \leq \ell \leq T$. Note further that when $r=1$, ie., $\fs=s\in \NN$, we have $Li_{s}=\Lis_{s}$ and so the formula above for $\Gamma_{s}\zeta_{A}(s)$ is the same as Theorem~\ref{T:Chang}, which was established previously by Anderson-Thakur~\cite{AT90}.
\end{remark}

Note that the terms  $(-1)^{\tiny{\rm{dep}(\fs_{\ell})}-1} \Lis_{\fs_{\ell}}(\bu_{\ell})$ in the identity above occur as certain coordinates of the logarithm of the $t$-module considered in Theorem~\ref{theorem_log_li}.

\subsection{Proof of Theorem~\ref{Thm: Infty-adic LogMZV}}\label{Sec: proof of main thm}

Let $r$ be a positive integer and fix any index $\fs=(s_{1},\ldots,s_{r}) \in \NN^{r}$. Let $n:=\wt(\fs)$.  We identify the set of triples $(b_{\ell}, \fs_{\ell},\bu_{\ell})$ occurring  in Theorem~\ref{T:MZV as Lis} as the set
\[ \cT=\left\{1,\ldots,T \right\}, \]
where we understand that each element $\ell\in \cT$ corresponds to a triple $(b_{\ell}, \fs_{\ell},\bu_{\ell})$. We further rearrange the indices to decompose the disjoint union
\[\cT=\cT_{1}\cup \cT_{2} \]
so that $\cT_{1}$ consists of those indices $\ell$ for which $\dep (\fs_{\ell})=\dep(\bu_{\ell})=1$, and $\cT_{2}$ consists of those indices for which $\dep (\fs_{\ell})=\dep(\bu_{\ell}) >1$. 

Put $s:=|\cT_{1}|$ and note that due to cancellations of the right hand side of the identity in Theorem~\ref{T:MZV as Lis}, we allow $s$ to be either zero or $T$.

For each $\ell\in \cT$ equipped with $(b_{\ell}, \fs_{\ell},\bu_{\ell})$, we let $G_{\ell}$ be the $t$-module that is defined in (\ref{E:Explicit t-moduleCMPL}), and $\bv_{\ell}\in G_{\ell}(k)$ be the special point defined in (\ref{E:v_s,u}) that are constructed using the pair $(\widetilde{\fs_{\ell}},\widetilde{\bu_{\ell}})$, where $\widetilde{\cdot}$ is defined to reverse the order of components (see the definition in Theorem~\ref{theorem_log_li}). Note that $G_{\ell}$ is the $t$-module associated to the Anderson dual $t$-motive $\cM_{\ell}'$ that is associated to $(\widetilde{\fs_{\ell}},\widetilde{\bu_{\ell}} )$ and is defined in Remark~\ref{Rem: G and M}. So by Remark~\ref{Rem: R.A.T of M'}  $\cM_{\ell}'$ is rigid analytically trivial for each $\ell\in \cT$. Note that $\wt(\fs_{\ell})=\wt(\fs)=n$ for every $\ell\in \cT$. Therefore, by Theorem~\ref{theorem_log_li} the $n$th coordinate of $\log_{G_{\ell}}(\bv_{\ell})$ is
\[ (-1)^{\dep(\widetilde{\fs_{\ell}})-1}\Lis_{\widetilde{\widetilde{\fs_{\ell}}}} \left( \widetilde{\widetilde{\bu_{\ell}}}\right) = (-1)^{\dep(\fs_{\ell})-1}\Lis_{\fs_{\ell}} \left( \bu_{\ell}\right)  .\]

 Put $\cN:=C^{\otimes n}$, the $n$th tensor power of the Carlitz $t$-motive, and note that $\bC^{\otimes n}$ is its corresponding $t$-module (see~\cite[Sec.~5.2]{CPY14}). By the definition of $\bC^{\otimes n}$, we see that the $n$th coordinate of $\Lie \bC^{\otimes n }(\CC_{\infty})$ is tractable.

 Note that for $\ell\in \cT_{1}$, $\cM_{\ell}'$ is isomorphic to $\cN$ and for $\ell\in \cT_{2}$, $\cM_{\ell}'$ fits into the short exact sequence of left $\ok[t,\sigma]$-modules
\[0 \rightarrow \cN \rightarrow \cM_{\ell}' \rightarrow \cM_{\ell}'' \rightarrow 0, \]
where $\cM_{\ell}''$ is an Anderson dual $t$-motive. Let $\cM$ be the fiber coproduct of $\left\{ \cM_{\ell}' \right\}_{\ell=1}^{T}$ over $\cN$ and so by Proposition~\ref{Prop: R.A.T} $\cM$ is rigid analytically trivial. Let  $G_{\fs}$ be the $t$-module associated to $\cM$, ie., $G_{\fs}(\ok)\cong \cM/(\sigma-1)\cM$ as $\FF_{q}[t]$-modules. Hence $G_{\fs}$ is uniformizable by Remark~\ref{Rem: R.A.T}.

Recall that every $\bu_{\bj}$ belongs to $\mathbb{D}_{\fs}'$ for $\bj\in J_{\fs}$, and hence we have that for every $\ell\in \cT$, $\bu_{\ell}$ belongs to $\mathbb{D}_{\fs_{\ell}}'$ and hence $\widetilde{\bu_{\ell}}\in \mathbb{D}_{\widetilde{\fs_{\ell}}}'$. Since $(G_{\ell},\bv_{\ell})$ are constructed using $(\widetilde{\fs_{\ell}},\widetilde{\bu_{\ell}})$, which satisfy the conditions of  Theorem~\ref{theorem_log_li},   $\log_{G_{\ell}}(\bv_{\ell})$ converges $\infty$-adically for every $\ell\in \cT$.

Note that since all $\bu_{\ell}$ are integral points (see~(\ref{E:triples})), by Remark~\ref{Rem:Integral} the $t$-modules $\left\{G_{\ell}\right\}_{\ell=1}^{T}$ are defined over $k$ and hence $G_{\fs}$ is also defined over $k$ by Corollary~\ref{Cor: defined over A}.  

Now we let $\pi:\oplus_{\ell=1}^{T} G_{\ell} \rightarrow G_{\fs}$ be the morphism of $t$-modules over $k$ given in Definition~\ref{Def:pi}.  Recall that to simplify notation, we use $[a]$ for the action of $a\in \FF_{q}[t]$ on any $t$-module without confusion. For each $\ell\in \cT$, we  define 
\begin{equation}\label{E: Def of Z ell}
Z_{\ell}:=\log_{G_{\ell}}(\bv_{\ell})  \in \Lie G_{\ell} (\CC_{\infty}), 
\end{equation}
and  further set 
\begin{equation}\label{E: Def of Zs}
Z_{\fs}:=\partial \pi \left( (\partial[b_{\ell}(t)] Z_{\ell})_{\ell}\right)\in \Lie G_{\fs} (\CC_{\infty}),
\end{equation}
and 
\begin{equation}\label{E: Def of vs}
\bv_{\fs}:=\pi \left( ([b_{\ell}(t)] \bv_{\ell})_{\ell}\right)\in G_{\fs}(k),
\end{equation}
where $b_{\ell}\in A$ are given in Theorem~\ref{T:MZV as Lis}. We note that by the functional 
 equation~(\ref{E:IdentityExpLog})  we have
 \[\exp_{G_{\ell}}( Z_{\ell} ) = \bv_{\ell} \] 
and therefore by Lemma~\ref{Lemma: Key Lemma} we have 
\[\exp_{G_{\fs}}(Z_{\fs})=\bv_{\fs} .\]

On the other hand, by Theorem~\ref{theorem_log_li} the $n$th coordinate of $Z_{\ell}$ is given by $(-1)^{\tiny{\dep(\fs_{\ell}})-1}\Li_{\fs_{\ell}}^{\star}(\bu_{\ell}) $. By Lemma~\ref{Lemma: Key Lemma} and the formula in Theorem~\ref{T:MZV as Lis} we see that the $n$th coordinate of $Z_{\fs}$ is $\Gamma_{\fs}\zeta_{A}(\fs)$.

\begin{remark}
We mention that all other coordinates of $Z_{\fs}$ can be explicitly written down in terms of Taylor coefficients of $t$-motivic MZV's and $t$-motivic CMSPL's in~\cite{CGM19}. As the formulae of the coordinates are not used here, we refer the reader to~\cite{CGM19} in order to save some of length of this paper.
\end{remark}

\subsection{Examples}
\begin{example}

Take $q$ to be a power of any prime number $p$ and let $\fs = (1,1,2)$.
In this case, we have $\Gamma_{1} = \Gamma_{2} = 1$, $H_{1-1} = H_{2-1} = 1$, $J_{(1,1,2)} = \{ (0,0,0) \}$, $\bu_{(0,0,0)} = (1,1,1)$, $a_{(0,0,0)} = 1$.
Thus we have
\begin{eqnarray*}
\zeta_{A}(1,1,2) \!\!\! & = & \!\!\! \Li_{(1,1,2)}(1,1,1)
= \Lis_{(1,1,2)}(1,1,1) - \Lis_{(2,2)}(1,1) - \Lis_{(1,3)}(1,1) + \Lis_{4}(1) \\
\!\!\! & = & \!\!\! (-1)^{1-1} \Lis_{4}(1) + (-1)^{2-1} \Lis_{(1,3)}(1,1) \\
& & + (-1)^{2-1} \Lis_{(2,2)}(1,1) + (-1)^{3-1} \Lis_{(1,1,2)}(1,1,1),
\end{eqnarray*}
and $(b_{1}, \fs_{1}, \bu_{1}) = (1, 4, 1)$, $(b_{2}, \fs_{2}, \bu_{2}) = (1, (1,3), (1,1))$, $(b_{3}, \fs_{3}, \bu_{3}) = (1, (2,2), (1,1))$, $(b_{4}, \fs_{4}, \bu_{4}) = (1, (1,1,2), (1,1,1))$.

For $\ell = 1$, we have $G_{1} = \bC^{\otimes 4}$, and hence its $t$-action on $\GG_{a}^{4}$ is given by
\[
\bC^{\otimes 4}_{t} =  \left( \begin{array}{cccc} \theta & 1 & & \\ & \theta & 1 & \\ & & \theta & 1 \\ \tau & & & \theta \end{array} \right).
\]
We further have $\bv_{1} = (0, 0, 0, 1)^{\tr} \in \bC^{\otimes 4}(k)$, and $Z_{1} = (*, *, *, \Lis_{4}(1))^{\tr} \in \Lie \bC^{\otimes 4}(\CC_{\infty})$.
For $\ell = 2$, we have $G_{2} = \GG_{a}^{5}$ with the $t$-action
\[
[t] = \left( \begin{array}{cccc|c} \theta & 1 & & & \\ & \theta & 1 & & \\ & & \theta & 1 & \\ \tau & & & \theta & -\tau \\ \hline & & & & \theta + \tau \end{array} \right),
\]
and 
\[ \bv_{2} = (0, 0, 0, -1, 1)^{\tr} \in G_{2}(k), \]
\[ Z_{2} = (*, *, *, -\Lis_{(1,3)}(1,1), \Lis_{1}(1))^{\tr} \in \Lie G_{2}(\CC_{\infty}). \]
For $\ell = 3$, we have $G_{3} = \GG_{a}^{6}$ with the $t$-action
\[
[t] = \left( \begin{array}{cccc|cc} \theta & 1 & & & & \\ & \theta & 1 & & & \\ & & \theta & 1 & & \\ \tau & & & \theta & -\tau & \\ \hline & & & & \theta & 1 \\ & & & & \tau & \theta \end{array} \right),
\]
and points
\[ \bv_{3} = (0, 0, 0, -1, 0, 1)^{\tr} \in G_{3}(k), \]
\[ Z_{3} = (*, *, *, -\Lis_{(2,2)}(1,1), *, \Lis_{2}(1))^{\tr} \in \Lie G_{3}(\CC_{\infty}). \]
For $\ell = 4$, we have $G_{4} = \GG_{a}^{7}$ with the $t$-action
\[
[t] = \left( \begin{array}{cccc|cc|c} \theta & 1 & & & & & \\ & \theta & 1 & & & & \\ & & \theta & 1 & & & \\ \tau & & & \theta & -\tau & & \tau \\ \hline & & & & \theta & 1 & \\ & & & & \tau & \theta & - \tau \\ \hline & & & & & & \theta + \tau \end{array} \right),
\]
and 
\[ \bv_{4} = (0, 0, 0, 1, 0, -1, 1)^{\tr} \in G_{4}(k), \]
\[ Z_{4} = (*, *, *,  \Lis_{(1,1,2)}(1,1,1), *, -\Lis_{(1,1)}(1,1), *, \Lis_{1}(1))^{\tr} \in \Lie G_{4}(\CC_{\infty}). \]
Therefore we have $G_{(1,1,2)} = \GG_{a}^{10}$ with the $t$-action
\[
[t] = \left( \begin{array}{cccc|c|cc|ccc}
\theta & 1 & & & & & & & & \\
& \theta & 1 & & & & & & & \\
& & \theta & 1 & & & & & & \\
\tau & & & \theta & -\tau & -\tau & & -\tau & & \tau \\ \hline
& & & & \theta + \tau & & & & & \\ \hline
& & & & & \theta & 1 & & & \\
& & & & & \tau & \theta & & & \\ \hline
& & & & & & & \theta & 1 & \\
& & & & & & & \tau & \theta & - \tau \\
& & & & & & & & & \theta + \tau
\end{array} \right),
\]
and 
\[ \bv_{(1,1,2)} = \pi(\bv_{1}, \bv_{2}, \bv_{3}, \bv_{4}) = (0, 0, 0, 0, 1, 0, 1, 0, -1, 1)^{\tr} \in G_{(1,1,2)}(k), \]
\[ Z_{(1,1,2)} = (*, *, *, \zeta_{A}(1,1,2), \Lis_{1}(1), *, \Lis_{2}(1), *, -\Lis_{(1,1)}(1,1), \Lis_{1}(1))^{\tr} \in \Lie G_{(1,1,2)}(\CC_{\infty}). \]
\end{example}

\begin{example}
Take $q = 2$ and $\fs = (1,3)$. In this case, we have $\Gamma_{1} = 1$, $\Gamma_{3} = \theta^{2} + \theta$, $H_{1-1} = 1$, $H_{3-1} = t + \theta^{2}$, $J_{(1,3)} = \{ (0,0), (0,1) \}$, $\bu_{(0,0)} = (1,\theta^{2})$, $\bu_{(0,1)} = (1,1)$, $a_{(0,0)} = 1$, $a_{(0,1)} = t$.
Thus we have
\begin{eqnarray*}
(\theta^{2} + \theta) \zeta_{A}(1,3)
\!\!\! & = & \!\!\! \Li_{(1,3)}(1,\theta^{2}) + \theta \Li_{(1,3)}(1,1) \\
\!\!\! & = & \!\!\! \Lis_{(1,3)}(1,\theta^{2}) - \Lis_{4}(\theta^{2}) + \theta \Lis_{(1,3)}(1,1) - \theta \Lis_{4}(1) \\
\!\!\! & = & \!\!\! (-1)^{1-1} \Lis_{4}(\theta^{2}) + \theta \cdot (-1)^{1-1} \Lis_{4}(1) \\
& & + (-1)^{2-1} \Lis_{(1,3)}(1,\theta^{2}) + \theta \cdot (-1)^{2-1} \Lis_{(1,3)}(1,1),
\end{eqnarray*}
and $(b_{1}, \fs_{1}, \bu_{1}) = (1, 4, \theta^{2})$, $(b_{2}, \fs_{2}, \bu_{2}) = (\theta, 4, 1)$, $(b_{3}, \fs_{3}, \bu_{3}) = (1, (1,3), (1,\theta^{2}))$, $(b_{4}, \fs_{4}, \bu_{4}) =  (\theta, (1,3), (1,1)) $.

For $\ell = 1$, we have $G_{1} = \bC^{\otimes 4}$,
and points
\[ \bv_{1} = (0, 0, 0, \theta^{2})^{\tr} \in \bC^{\otimes 4}(k), \]
\[ Z_{1} = (*, *, *, \Lis_{4}(\theta^{2}))^{\tr} \in \Lie \bC^{\otimes 4}(\CC_{\infty}). \]
For $\ell = 2$, we have $G_{2} = \bC^{\otimes 4}$,
and points
\[ \bv_{2} = (0, 0, 0, 1)^{\tr} \in \bC^{\otimes 4}(k), \]
\[ Z_{2} = (*, *, *, \Lis_{4}(1))^{\tr} \in \Lie \bC^{\otimes 4}(\CC_{\infty}). \]
We also have
\[ [t] \bv_{2} = (0, 0, 1, \theta)^{\tr} \in \bC^{\otimes 4}(k). \]
For $\ell = 3$, we have $G_{3} = \GG_{a}^{5}$ with the $t$-action
\[
[t] = \left( \begin{array}{cccc|c} \theta & 1 & & & \\ & \theta & 1 & & \\ & & \theta & 1 & \\ \tau & & & \theta & - \theta^{2} \tau \\ \hline & & & & \theta + \tau \end{array} \right),
\]
and points
\[ \bv_{3} = (0, 0, 0, - \theta^{2}, 1)^{\tr} \in G_{3}(k), \]
\[ Z_{3} = (*, *, *, -\Lis_{(1,3)}(1,\theta^{2}), \Lis_{1}(1))^{\tr} \in \Lie G_{3}(\CC_{\infty}). \]
For $\ell = 4$, we have $G_{4} = \GG_{a}^{5}$ with the $t$-action
\[
[t] = \left( \begin{array}{cccc|c} \theta & 1 & & & \\ & \theta & 1 & & \\ & & \theta & 1 & \\ \tau & & & \theta & - \tau \\ \hline & & & & \theta + \tau \end{array} \right),
\]
and 
\[ \bv_{4} = (0, 0, 0, - 1, 1)^{\tr} \in G_{4}(k), \]
\[ Z_{4} = (*, *, *, -\Lis_{(1,3)}(1,1), \Lis_{1}(1))^{\tr} \in \Lie G_{4}(\CC_{\infty}). \]
We also have
\[ [t] \bv_{4} = (0, 0, 1, \theta+1, \theta+1)^{\tr} \in G_{4}(k). \]
Therefore we have $G_{(1,3)} = \GG_{a}^{6}$ with the $t$-action
\[
[t] = \left( \begin{array}{cccc|c|c}
\theta & 1 & & & & \\
& \theta & 1 & & & \\
& & \theta & 1 & & \\
\tau & & & \theta & - \theta^{2} \tau & -\tau \\ \hline
& & & & \theta + \tau & \\ \hline
& & & & & \theta + \tau
\end{array} \right),
\]
and 
\[ \bv_{(1,3)} = \pi(\bv_{1}, [t] \bv_{2} , \bv_{3}, [t] \bv_{4}) = (0, 0, 0, 1, 1, \theta+1)^{\tr} \in G_{(1,3)}(k), \]
\[ Z_{(1,3)} = (*, *, *, (\theta^{2} + \theta) \zeta_{A}(1,3), \Lis_{1}(1),  \theta \Lis_{1}(1)  )^{\tr} \in \Lie G_{(1,3)}(\CC_{\infty}). \]
\end{example}

\section{$v$-adic multiple zeta values}\label{Sec: v-adic MZV}
Throughout this section, we fix a finite place $v$ of $k$  corresponding to a monic irreducible polynomial of $A$ that is still denoted by $v$ for convenience,  and then fix an embedding $\ok\hookrightarrow \CC_{v}$.  Let $|\cdot|_{v}$ be the normalized $v$-adic absolute value on $\CC_{v}$. For a matrix $\gamma=(\gamma_{ij})$ with entries $\gamma_{ij}\in \CC_{v}$, we define
\[|\gamma|_{v}:=\max_{i,j}\left\{ |\gamma_{ij}|_{v}\right\} .\]

In this section, we will define $v$-adic multiple zeta values inspired by Furusho's definition of $p$-adic multiple zeta values in \cite{F04}. The primary goal of this section is to give a logarithmic interpretation for $v$-adic MZV's in Theorem~\ref{T: LogInt for v-adic MZV}. Together with Theorem~\ref{Thm: Introduction}, we apply Yu's sub-$t$-module theorem~\cite{Yu97} to prove Theorem~\ref{Thm: Well-defined map in Introduction} .

In what follows, for a $t$-module $G$ defined over $\ok$ we denote by $\log_{G}(\bx)_{v}$ the $v$-adic convergence value of $\log_{G}$ at $\bx\in G(\CC_{v})$ whenever $\log_{G}(\bx)_{v}$ converges, ie., $\log_{G}$ converges $v$-adically at $\bx$. 

\subsection{Definition of $v$-adic MZV's}
\subsubsection{The set up}\label{Subsec: Set up} Fix an index $\fs=(s_{1},\ldots,s_{r})\in  \NN^{r}  $ with $n:=\wt(\fs)$. As in Sec.~\ref{Sec: proof of main thm}, we identify the set of triples $(b_{\ell}, \fs_{\ell},\bu_{\ell})$ occurring  in Theorem~\ref{T:MZV as Lis} as the set
\[ \cT=\left\{1,\ldots,T \right\}, \]
where we understand that each element $\ell\in \cT$ corresponds to a triple $(b_{\ell},\fs_{\ell},\bu_{\ell})$. Recall that each $\bu_{\ell}$ is an integral point in $A^{\dep(\fs_{\ell})}$. We let $G_{\ell}$ be the $t$-module defined over $k$ (Sec.~\ref{Subsec: Gu}) and $\bv_{\ell}$ be the special point in $G_{\ell}(k)$ constructed using the pair $(\widetilde{\fs_{\ell}},\widetilde{\bu_{\ell}})$. We then let $G_{\fs}$ be the $t$-module associated to the fiber coproduct $\cM$ of the Anderson dual $t$-motives $\left\{ \cM_{\ell}'\right\}_{\ell=1}^{T}$ over $C^{\otimes n}$. Finally, we define $\bv_{\fs}:=\pi \left( ( [b_{\ell}(t)] \bv_{\ell} )_{\ell} \right)\in G_{\fs}(k)$. 

Note that $G_{\fs}$ has dimension $d:=d_{\fs}:=n+h_{s+1}+\cdots+h_{T}$ (see~(\ref{E:dimG})), where $n+h_{\ell}$ is the dimension of $G_{\ell}$ for $s+1\leq \ell \leq T$.

\subsubsection{$v$-adic analytic continuation of $\Lis_{\fs}$} For each $\ell \in \cT$, we consider the CMSPL $\Lis_{\fs_{\ell}}$ and  its $v$-adic convergence.  We note that $\Lis_{\fs_{\ell}}$ converges on the open unit ball centered at the zero of $\CC_{v}^{\dep(\fs_{\ell})}$ and it is shown in \cite[Sec.~4.1]{CM17} that $\Lis_{\fs_{\ell}}$ can be analytically continued to the closed unit ball centered at the zero of  $\CC_{v}^{\dep(\fs_{\ell})}$. Since $\bu_{\ell}$ is an integral point in $A^{\dep{\fs_{\ell}}}$, we have $|\bu_{\ell}|_{v}\leq 1$ and hence $\Lis_{\fs_{\ell}}$ is defined at $\bu_{\ell}$ in the sense of $v$-adic convergence. We denote by $\Lis_{\fs_{\ell}}(\bu_{\ell})_{v}$ the $v$-adic  convergence value of $\Lis_{\fs_{\ell}}$ at $\bu_{\ell}$, where we add the subscript $v$ to emphasize the $v$-adic convergence.  More precisely, $\Lis_{\fs_{\ell}}(\bu_{\ell})_{v}$ is the value $\frac{(-1)^{\dep(\fs_{\ell})-1}}{a(\theta)} $ multiplied by the  $n$th  coordinate of $\log_{G_{\ell}}([a] \bv_{\ell})_{v}$ for some nonzero polynomial $a\in \FF_{q}[t]$ with $| [a] \bv_{\ell} |_{v} < 1$. Since the coefficients of $\log_{G_{\ell}}$ are matrices with entries in $k$ (see \cite[(3.2.4)]{CM17}), we have $\Lis_{\fs_{\ell}}(\bu_{\ell})_{v} \in k_{v}$.

\subsubsection{The definition} Now we are ready to define $v$-adic MZV's using $\Lis_{\fs_{\ell}}(\bu_{\ell})_{v}$.

\begin{definition}\label{Def: v-adic MZV}  For any index $\fs=(s_{1},\ldots,s_{r}) \in \NN^{r}$, let notation be given in Theorem~\ref{T:MZV as Lis}. We define the $v$-adic MZV $\zeta_{A}(\fs)_{v}$ to be the following value:
\[  \zeta_{A}(\fs)_{v}:=\frac{1}{ \Gamma_{\fs}} \sum_{\ell}b_{\ell}\cdot (-1)^{\tiny{\rm{dep}(\fs_{\ell})}-1} \Lis_{\fs_{\ell}}(\bu_{\ell})_{v} \in k_{v}. \]
We call $wt(\fs):=\sum_{i=1}^{r} s_{i}$ the weight and $\dep (\fs):=r$ the depth of the presentation $\zeta_{A}(\fs)_{v}$. 
\end{definition}

We further mention that Thakur~\cite[Sec.~5.10]{T04} also defined $v$-adic MZV's by using Kummer congruences to interpolate the power sums at non-positive integers, and he remarked that his interpolated $v$-adic MZV's are not the same as ours defined above but they are expected to be related by certain linear relations.

\subsection{Logarithmic interpretation of $v$-adic MZV's} The primary goal in this subsection is to give a logarithmic interpretation for $\zeta_{A}(\fs)_{v}$, where the depth one case was established in \cite{AT90}.

\subsubsection{The $v$-adic convergence of $\log_{G_{\fs}}$} 

\begin{proposition}\label{Prop: v-adic conv of log G} Fix any index $\fs\in \NN^{r}$. For any  $\bx \in G_{\fs}(\CC_{v})$ with $|\bx|_{v} < 1$,  we have that $\log_{G_{\fs}}$ converges $v$-adically at $\bx$ in $\Lie G_{\fs}(\CC_{v})$. 
\end{proposition}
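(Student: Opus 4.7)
The plan is to bound the matrix coefficients $P_i$ in the expansion $\log_{G_{\fs}} = \sum_{i \ge 0} P_i \tau^i$ and then exploit the doubly-exponential decay $|\bx^{(i)}|_v = |\bx|_v^{q^i}$ to guarantee convergence whenever $|\bx|_v < 1$.

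First I would verify that the $t$-action on $G_{\fs}$ takes the form $\rho_{t} = \theta I_{d} + N + E\tau$, with $N \in \Mat_{d}(\FF_{q})$ nilpotent (so $|N|_{v} \le 1$ and $N^{(i)} = N$) and $E \in \Mat_{d}(A)$ (so $|E^{(i)}|_{v} \le 1$ for every $i$). This is inherited from the explicit shape (\ref{E:Explicit t-moduleCMPL}) of the $t$-action on each building block $G_{\ell}$, together with the observation that the projection $\pi : \oplus_{\ell} G_{\ell} \twoheadrightarrow G_{\fs}$ of Definition~\ref{Def:pi} is $\FF_{q}$-linear and contains no $\tau$-terms, so it preserves the $\theta I + N + E\tau$ shape. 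In particular, the nilpotency index of $N$ is bounded by the dimension $d := \dim G_{\fs}$.

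The functional equation $\log_{G_{\fs}} \circ \rho_{t} = \partial \rho_{t} \circ \log_{G_{\fs}}$ then forces, upon equating coefficients of $\bz^{(i)}$, the recursion
\[
\bigl[(\theta^{q^{i}} - \theta) I_{d} - \ad(N)\bigr](P_{i}) \;=\; -\,P_{i-1}\, E^{(i-1)}.
\]
Since $\ad(N)$ is nilpotent on $\Mat_{d}$, one may invert to obtain the finite sum
\[
P_{i} \;=\; -\sum_{m \ge 0} \frac{\ad(N)^{m}\!\bigl(P_{i-1}\, E^{(i-1)}\bigr)}{(\theta^{q^{i}} - \theta)^{m+1}}.
\]
The crucial $v$-adic input is the uniform lower bound $|\theta^{q^{i}} - \theta|_{v} \ge q^{-d_{v}}$ for every $i \ge 1$, where $d_{v} := \deg v$; this follows from the fact that $\theta^{q^{i}} - \theta$ is squarefree in $\FF_{q}[\theta]$, so the monic irreducible $v$ divides it at most once. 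Combining this with $|N|_{v} \le 1$ and $|E^{(i-1)}|_{v} \le 1$, the recursion yields $|P_{i}|_{v} \le q^{d\cdot d_{v}}\, |P_{i-1}|_{v}$, hence $|P_{i}|_{v} \le C^{i}$ for the constant $C := q^{d\cdot d_{v}}$.

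Finally, for $\bx \in G_{\fs}(\CC_{v})$ with $|\bx|_{v} < 1$ we obtain $|P_{i}\,\bx^{(i)}|_{v} \le C^{i}\, |\bx|_{v}^{q^{i}}$, which tends to $0$ because the doubly-exponential factor $|\bx|_{v}^{q^{i}}$ overwhelms the single-exponential growth $C^{i}$. Hence $\sum_{i} P_{i} \bx^{(i)}$ converges $v$-adically in $\Lie G_{\fs}(\CC_{v})$. The main technical point is confirming the clean form of $\rho_{t}$ on the fiber coproduct $G_{\fs}$ and tracking that the entries of $N$ and $E$ have the stated integrality; once that is in place the remaining convergence analysis parallels the $\infty$-adic treatment used for Proposition~\ref{proposition_conv_log}.
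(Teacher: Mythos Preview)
Your argument is correct, but it proceeds along a different line from the paper's own proof. The paper does not attempt to describe $\rho_t$ on $G_{\fs}$ or bound the coefficient matrices $P_i$ directly. Instead it observes that the functional equation $\partial\pi\circ(\oplus_\ell\log_{G_\ell})=\log_{G_{\fs}}\circ\pi$ forces the coefficients $Q_i$ of $\log_{G_{\fs}}$ to have an explicit block form built out of the coefficients $Q_{\ell i}$ of the individual $\log_{G_\ell}$. Writing $\bx=(\bx_0^{\tr},\bx_{s+1}^{\tr},\ldots,\bx_T^{\tr})^{\tr}$ compatibly with that block structure, one gets $|Q_i\bx^{(i)}|_v\le\max_\ell |Q_{\ell i}\,\by_\ell^{(i)}|_v$ for suitable $\by_\ell$ with $|\by_\ell|_v<1$, and then the convergence is inherited wholesale from the already-established $v$-adic convergence of each $\log_{G_\ell}$ in \cite[Sec.~3.3]{CM17}. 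So the paper's proof is a reduction, while yours is a direct replay of the \cite{CM17} estimate on the larger $t$-module.

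Two small points on your version. First, your claim that $\rho_t^{G_{\fs}}=\theta I_d+N+E\tau$ with $N\in\Mat_d(\FF_q)$ nilpotent and $E\in\Mat_d(A)$ is true, but the phrase ``inherited via $\pi$'' deserves one line more: from $\rho_t^{G_{\fs}}\Pi=\Pi(\oplus_\ell\rho_t^{G_\ell})$ with $\Pi$ a surjective matrix over $\FF_q$, comparing $\tau$-degrees gives $A_j\Pi=0$ for $j\ge2$ (hence $A_j=0$), and the resulting equations $N\Pi=\Pi N'$, $E\Pi=\Pi E'$ determine $N,E$ uniquely with entries in $\FF_q$ and $A$ respectively; nilpotency of $N$ follows from $N^k\Pi=\Pi(N')^k$. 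Second, your constant $C=q^{d\cdot d_v}$ undercounts slightly: the nilpotency index of $\ad(N)$ on $\Mat_d$ can be as large as $2d-1$, so the finite sum over $m$ gives $|P_i|_v\le q^{(2d-1)d_v}|P_{i-1}|_v$. This is harmless for the conclusion, since any fixed $C$ suffices against the doubly-exponential decay $|\bx|_v^{q^i}$.
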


\begin{proof} We write $\log_{\bC^{\otimes n}}=\sum_{i=0}^{\infty}R_{i}\tau^{i}$ ($R_{i} \in \Mat_{n}(k)$) for the logarithm of $\bC^{\otimes n}$.
We denote the logarithms of $G_{\fs}$ and $G_{\ell}$ by
\[
\log_{G_{\fs}} = \sum_{i \geq 0} Q_{i} \tau^{i} \ (Q_{i} \in \Mat_{d}(k)) \ \ \ \mathrm{and} \ \ \ \log_{G_{\ell}} = \sum_{i \geq 0} Q_{\ell i} \tau^{i} \ (Q_{\ell i} \in \Mat_{n+h_{\ell}}(k))
\] respectively.  For each $s+1 \leq \ell \leq T$, we can write
\[
Q_{\ell i} = \left( \begin{array}{cc} R_{i} & R'_{\ell i} \\ & R''_{\ell i} \end{array} \right) \
(R'_{\ell i} \in \Mat_{n \times h_{\ell}}(k), \ R''_{\ell i} \in \Mat_{h_{\ell}}(k)).
\]
Then $Q_{i}$ is expressed as
\[
Q_{i} = \left( \begin{array}{ccccc} R_{i} & R'_{s+1, i} & R'_{s+2, i} & \cdots & R'_{T, i} \\ & R''_{s+1, i} & & & \\ & & R''_{s+2, i} & & \\ & & & \ddots & \\ & & & & R''_{T, i} \end{array} \right)
\]
for each $i$ since it forces the functional equation
\[\partial \pi \circ \left( \oplus_{\ell=1}^{T} \log_{G_{\ell}} \right)=\log_{G_{\fs}} \circ \pi   .\]

Let $\bx = (\bx_{0}^{\tr}, \bx_{s+1}^{\tr}, \bx_{s+2}^{\tr}, \dots, \bx_{T}^{\tr})^{\tr} \in G_{\fs}(\CC_{v})$ with $\bx_{0} \in \CC_{v}^{n}$, $\bx_{\ell} \in \CC_{v}^{h_{\ell}}$ for $s+1 \leq \ell \leq T$, and $|\bx|_{v} < 1$.
Note that  by \cite[Sec.~3.3]{CM17}  we have
\[ \left\{   \left| Q_{s+1, i} \left( \begin{array}{c} \mathbf{x}_{0} \\ \bx_{s+1} \end{array} \right)^{(i)} \right|_{v} \right\} \to 0 \hbox{ and } \left\{   \left| Q_{\ell i} \left( \begin{array}{c} \mathbf{0} \\ \bx_{\ell} \end{array} \right)^{(i)} \right|_{v} \right\} \to 0 \hbox{ as } i \to \infty.
\]
It follows that
\[
|Q_{i} \bx^{(i)}|_{v} \leq \max_{s+2 \leq \ell \leq T} \left\{   \left| Q_{s+1, i} \left( \begin{array}{c} \mathbf{x}_{0} \\ \bx_{s+1} \end{array} \right)^{(i)} \right|_{v},  \left| Q_{\ell i} \left( \begin{array}{c} \mathbf{0} \\ \bx_{\ell} \end{array} \right)^{(i)} \right|_{v} \right\} \to 0 \hbox{ as } i \to \infty.
\]
\end{proof}

\begin{proposition}\label{Prop: conv at av}
For any index $\fs\in \NN^{r}$, we continue with the notation as above. Then there is a precise nonzero polynomial $a\in \FF_{q}[t]$ (depending on $\fs$ and $v$) so that $|[a]\bv_{\fs}|_{v} < 1$, hence $\log_{G_{\fs}}\left( [a]\bv_{\fs}\right)_{v}$ converges. 
\end{proposition}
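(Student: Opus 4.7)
The plan is to exploit the integrality of the construction and reduce modulo $v$ to produce the required annihilating polynomial $a$. First, I would record the integrality: by Remark~\ref{Rem:Integral}, each $t$-module $G_\ell$ satisfies $\rho_{\ell,t}\in\Mat_{\dim G_\ell}(A[\tau])$ and $\bv_\ell\in G_\ell(A)$. Since $b_\ell\in A$, we also have $[b_\ell(t)]\bv_\ell\in G_\ell(A)$. The morphism $\pi$ of Definition~\ref{Def:pi} has no $\tau$-terms and integer coefficients ($0$ and $1$), so by Corollary~\ref{Cor: defined over A} the $t$-module $G_\fs$ is defined over $A$ and $\bv_\fs=\pi(([b_\ell(t)]\bv_\ell)_\ell)$ lies in $G_\fs(A)\subset A^{d_\fs}$.

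Next I would reduce modulo $v$. Let $\FF_v:=A/(v)$, which is a finite field. Reducing the matrix entries of $\rho_t$ gives a $t$-module structure on $\overline{G}_\fs:=\GG_a^{d_\fs}{}_{/\FF_v}$, and the reduction map $G_\fs(A)\twoheadrightarrow\overline{G}_\fs(\FF_v)$ is $\FF_q[t]$-linear. Denote by $\overline{\bv}_\fs\in\overline{G}_\fs(\FF_v)=\FF_v^{d_\fs}$ the image of $\bv_\fs$. Since $\overline{G}_\fs(\FF_v)$ is a finite set, the $\FF_q[t]$-submodule it generates is finite, hence a torsion $\FF_q[t]$-module. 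Therefore the annihilator of $\overline{\bv}_\fs$ is a nonzero ideal of $\FF_q[t]$, and we may pick a nonzero $a\in\FF_q[t]$ with $[a]\overline{\bv}_\fs=0$ in $\overline{G}_\fs(\FF_v)$. Pulling back, this means all coordinates of $[a]\bv_\fs\in A^{d_\fs}$ lie in the ideal $vA$, so $|[a]\bv_\fs|_v\leq|v|_v<1$.

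Finally, with $[a]\bv_\fs\in G_\fs(\CC_v)$ satisfying $|[a]\bv_\fs|_v<1$, Proposition~\ref{Prop: v-adic conv of log G} directly yields the $v$-adic convergence of $\log_{G_\fs}([a]\bv_\fs)_v$ in $\Lie G_\fs(\CC_v)$.

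There is no serious obstacle here; the content is the standard \textquotedblleft kill the reduction\textquotedblright\ trick, and the only point requiring a mild verification is the integrality of the construction, which is immediate from Remark~\ref{Rem:Integral}, the integrality of the $b_\ell$, and the shape of $\pi$ in Definition~\ref{Def:pi}. If one wanted a more concrete choice of $a$, one could take $a$ to be the characteristic polynomial (over $\FF_q$) of the $\FF_q$-linear endomorphism \textquotedblleft multiplication by $t$\textquotedblright\ on the finite $\FF_q$-vector space $\FF_v^{d_\fs}$, or any monic $a$ in the annihilator ideal of $\overline{\bv}_\fs$; but for the statement of the proposition mere existence suffices.
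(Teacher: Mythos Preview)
Your argument is correct, but it takes a genuinely different route from the paper's proof. The paper constructs the polynomial $a$ explicitly: writing $\fs_\ell=(s_{\ell 1},\dots,s_{\ell r_\ell})$, it sets
\[
a_\ell:=\prod_{j=1}^{r_\ell}\bigl(v(t)^{s_{\ell 1}+\cdots+s_{\ell j}}-1\bigr),\qquad a:=\prod_{\ell=1}^{T}a_\ell,
\]
and then invokes \cite[Prop.~4.1.1]{CM17} to obtain $|[a_\ell]\bv_\ell|_v<1$ for each $\ell$; the integrality of the $[\alpha]$-actions then gives $|[a]([b_\ell(t)]\bv_\ell)|_v\leq|[a_\ell]\bv_\ell|_v<1$, and the shape of $\pi$ (Definition~\ref{Def:pi}) yields $|[a]\bv_\fs|_v\leq\max_\ell|[a]([b_\ell(t)]\bv_\ell)|_v<1$. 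Your proof instead reduces the whole $t$-module $G_\fs$ modulo $v$ and uses finiteness of $\overline{G}_\fs(\FF_v)$ to produce an annihilator, which is more elementary and applies to any $t$-module defined over $A$ together with an integral point, without appealing to the internal structure of the $G_\ell$ or to \cite{CM17}. What the paper's approach buys is twofold: it delivers the ``precise'' polynomial promised in the statement, and, more importantly, the very same $a$ already satisfies $|[a]\bv_\ell|_v<1$ for every component $G_\ell$, a fact used immediately in the proof of Theorem~\ref{T: LogInt for v-adic MZV}. Your argument would need a small supplementary step there (e.g.\ multiply by analogous annihilators for each $\overline{\bv}_\ell$), which is harmless but worth noting.
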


\begin{proof}
Write $\fs_{\ell} = (s_{\ell 1}, \dots, s_{\ell r_{\ell}})$ and set
\[
a_{\ell} := (v(t)^{s_{\ell 1} + s_{\ell 2} + \cdots + s_{\ell r_{\ell}}}-1) (v(t)^{s_{\ell 1} + s_{\ell 2} + \cdots + s_{\ell, r_{\ell}-1}}-1) \cdots (v(t)^{s_{\ell 1}}-1) \in \FF_{q}[t]
\]
and
\[
a := \prod_{\ell=1}^{T} a_{\ell} \in \FF_{q}[t],
\]
where $v(t) := v|_{\theta = t}$. Since $\bu_{\ell}\in A^{\dep(\fs_{\ell})}$ for each $\ell$, by Remark~\ref{Rem:Integral} we have that for each $\alpha\in \FF_{q}[t]$, the coefficient matrices  of $\tau^{i}$ of $[\alpha]$ are in $\Mat_{\dim G_{\ell}}(A)$. It follows that
$| [a]([b_{\ell}(t)] \bv_{\ell}) |_{v}=| [b_{\ell}(t)]([a] \bv_{\ell}) |_{v} \leq | [a] \bv_{\ell} |_{v} \leq | [a_{\ell}] \bv_{\ell} |_{v} < 1$, where the last inequality comes from the proof of \cite[Prop.\ 4.1.1]{CM17}. So by \cite[Sec.~3.3]{CM17}  again $\log_{G_{\ell}} ([a] ([b_{\ell}] \bv_{\ell}))_{v}$ converges in $\Lie G_{\ell}(\CC_{v})$.
Therefore we have
\begin{equation}\label{E: v-adic inequality}
|[a] \bv_{\fs}|_{v}=|\pi\left(([a][b_{\ell}(t)] \bv_{\ell})_{\ell}  \right)|_{v}  \leq \max_{\ell} \left\{ | [a]([b_{\ell}(t)] \bv_{\ell}) |_{v} \right\} < 1,
\end{equation}
where the first inequality comes from Definition~\ref{Def:pi}. It follows that
\[
\log_{G_{\fs}}([a] \bv_{\fs})_{v} = \partial \pi ((\log_{G_{\ell}}([a]([b_{\ell}(t)] \bv_{\ell}))_{v})_{\ell})
\]
converges in $\Lie G_{\fs}(\CC_{v})$.
\end{proof}

\begin{theorem}\label{T: LogInt for v-adic MZV} Fix a finite place $v$ of $k$. Given an index $\fs = (s_{1}, \dots, s_{r}) \in \NN^{r}$, we put $n := \wt(\fs)$ and let $\left\{(b_{\ell}, \fs_{\ell}, \bu_{\ell})\right\}_{\ell=1}^{T}$ be the set of triples in (\ref{E:triples}).
Let $G_{\ell}$ be the $t$-module defined over $k$ and $\bv_{\ell} \in G_{\ell}(k)$ be the special point which are constructed using the pairs $(\widetilde{\fs_{\ell}},\widetilde{\bu_{\ell}})$, and $G_{\fs}$ be the $t$-module over $k$ and $\bv_{\fs}\in G_{\fs}(k)$ be constructed as above. We  take a nonzero $a \in \FF_{q}[t]$ for which $|[a] \bv_{\fs}|_{v} < 1$.  Then the $n$th coordinate of $\log_{G_{\fs}}([a] \bv_{\fs})_{v}$ is given  by $a(\theta)\Gamma_{\fs}\zeta_{A}(\fs)_{v}$.
\end{theorem}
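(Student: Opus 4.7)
The plan is to run the proof of Theorem~\ref{Thm: Introduction} in the $v$-adic setting, replacing the $\infty$-adic convergence of logarithms by the $v$-adic convergence already provided by Propositions~\ref{Prop: v-adic conv of log G} and~\ref{Prop: conv at av}. The key technical input is that the morphism $\pi \colon \oplus_{\ell=1}^{T} G_{\ell} \to G_{\fs}$ from Lemma~\ref{L:Key Lemma}, together with its differential $\partial \pi$, intertwines the $v$-adic logarithm of $G_{\fs}$ with those of the $G_{\ell}$, exactly as in the $\infty$-adic case.

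First, starting from the hypothesis $|[a]\bv_{\fs}|_{v} < 1$, I would retrieve from the proof of Proposition~\ref{Prop: conv at av} the stronger fact that $|[a][b_{\ell}(t)]\bv_{\ell}|_{v} < 1$ for every $\ell$, so that each $\log_{G_{\ell}}([a][b_{\ell}(t)]\bv_{\ell})_{v}$ converges in $\Lie G_{\ell}(\CC_{v})$. The block-diagonal structure of the coefficient matrices $Q_{i}$ of $\log_{G_{\fs}}$ in the proof of Proposition~\ref{Prop: v-adic conv of log G} (combined with functoriality~(\ref{E:logFunctorial}) of logarithms) then upgrades the formal identity $\log_{G_{\fs}} \circ \pi = \partial \pi \circ (\oplus_{\ell} \log_{G_{\ell}})$ to the $v$-adic identity
\begin{equation*}
\log_{G_{\fs}}([a]\bv_{\fs})_{v} \;=\; \partial \pi\Bigl(\bigl(\log_{G_{\ell}}([a][b_{\ell}(t)]\bv_{\ell})_{v}\bigr)_{\ell}\Bigr).
\end{equation*}

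Next, I would identify the $n$th coordinate of each summand. By the very definition of $\Lis_{\fs_{\ell}}(\bu_{\ell})_{v}$ recalled in Section~\ref{Sec: v-adic MZV}, applied with the polynomial $a \cdot b_{\ell}(t) \in \FF_{q}[t]$ (which is legal since $|[a b_{\ell}(t)]\bv_{\ell}|_{v} < 1$), the $n$th coordinate of $\log_{G_{\ell}}([a][b_{\ell}(t)]\bv_{\ell})_{v}$ is
\begin{equation*}
(-1)^{\dep(\fs_{\ell})-1}\, a(\theta)\, b_{\ell}(\theta)\, \Lis_{\fs_{\ell}}(\bu_{\ell})_{v}.
\end{equation*}
Because $\pi$ is given by coordinate summation without any $\tau$-terms (Definition~\ref{Def:pi}), the differential $\partial \pi$ has the same matrix form via~(\ref{E:partial rho a}); in particular, the $n$th coordinate of $\partial \pi$ applied to a stacked vector is the sum over $\ell$ of the $n$th coordinates of the components. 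This is precisely the $v$-adic instance of the argument packaged in Lemma~\ref{Lemma: Key Lemma}(a).

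Assembling the pieces, the $n$th coordinate of $\log_{G_{\fs}}([a]\bv_{\fs})_{v}$ equals
\begin{equation*}
\sum_{\ell=1}^{T} (-1)^{\dep(\fs_{\ell})-1}\, a(\theta)\, b_{\ell}(\theta)\, \Lis_{\fs_{\ell}}(\bu_{\ell})_{v} \;=\; a(\theta)\, \Gamma_{\fs}\, \zeta_{A}(\fs)_{v},
\end{equation*}
where the last equality is Definition~\ref{Def: v-adic MZV}. The main (and only real) obstacle is the bookkeeping in the first step, namely confirming that the $v$-adic convergence of $\log_{G_{\fs}}([a]\bv_{\fs})_{v}$ genuinely propagates termwise through the fiber-coproduct projection $\partial \pi$; but this is already handled by the block-matrix description of the $Q_{i}$ extracted in the proof of Proposition~\ref{Prop: v-adic conv of log G}, so no additional analytic input beyond what has been established is required.
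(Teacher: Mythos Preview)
Your overall strategy matches the paper's proof: use the functorial identity $\log_{G_{\fs}}\circ\pi=\partial\pi\circ(\oplus_{\ell}\log_{G_{\ell}})$, evaluate at $([a][b_{\ell}(t)]\bv_{\ell})_{\ell}$, and read off the $n$th coordinate via Definition~\ref{Def:pi} and Definition~\ref{Def: v-adic MZV}. Once the individual $\log_{G_{\ell}}([a][b_{\ell}(t)]\bv_{\ell})_{v}$ are known to converge, everything you write goes through exactly as in the paper.

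The gap is in your first step. You assert that from the hypothesis $|[a]\bv_{\fs}|_{v}<1$ one can ``retrieve from the proof of Proposition~\ref{Prop: conv at av}'' the inequalities $|[a][b_{\ell}(t)]\bv_{\ell}|_{v}<1$ for every $\ell$. That proposition establishes the \emph{reverse} implication: it produces a specific $a$ with the individual inequalities and deduces the inequality for $\bv_{\fs}$ via the ultrametric bound $|\pi(\cdot)|_{v}\le\max_{\ell}|\cdot|_{v}$. The direction you need does not follow, because $\pi$ sums the first $n$ coordinates and non-archimedean cancellation can make the sum small without the summands being small; for $\ell\le s$ (where $G_{\ell}=\bC^{\otimes n}$) there are no ``tail'' coordinates to help. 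The paper handles this not by proving your claimed implication but by invoking the Remark following the theorem: since the $n$th coordinate of $\Lie G_{\fs}$ is tractable, it suffices to verify the conclusion for \emph{some} $a$, and one then chooses an $a$ (e.g.\ a multiple of the given one) satisfying both $|[a]\bv_{\fs}|_{v}<1$ and $|[a]\bv_{\ell}|_{v}<1$ for all $\ell$. Inserting that reduction at the start fixes your argument completely.
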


\begin{remark}
Since the $n$th coordinate of $\Lie G_{\fs}(\CC_{v})$ is tractable, it is enough to show that the statement of Theorem 6.2.4 holds for some $a$. Indeed, assume that the statement holds for $a$, and let $a' \in \Fq[t]$ be another nonzero polynomial with $|[a'] \bv_{\fs}|_{v} < 1$. Then we have
\begin{eqnarray*}
a(\theta) \times n\mathrm{th \ coordinate \ of \ } \log_{G_{\fs}}([a'] \bv_{\fs})_{v}
&=& n\mathrm{th \ coordinate \ of \ } \log_{G_{\fs}}([a] [a'] \bv_{\fs})_{v} \\
&=& a'(\theta) \times n\mathrm{th \ coordinate \ of \ } \log_{G_{\fs}}([a] \bv_{\fs})_{v} \\
&=& a(\theta) a'(\theta) \Gamma_{\fs} \zeta_{A}(\fs)_{v}.
\end{eqnarray*} 
\end{remark}

{\textit{ Proof of Theorem~\ref{T: LogInt for v-adic MZV}}}. Let $\cT=\left\{ 1,\cdots,T \right\}$ be given as before in Sec.~\ref{Subsec: Set up}.  We first take a nonzero polynomial $a\in \FF_{q}[t]$ so that 
\begin{enumerate}
\item[$\bullet$] $|[a(t)]\bv_{\fs}|_{v}<1$.
\item[$\bullet$] $|[a(t)] \bv_{\ell}|_{v}<1$ for all $\ell\in \cT$.
\end{enumerate}
Note that the second property can be obtained using the same arguments in (\ref{E: v-adic inequality}). It follows by Proposition~\ref{Prop: v-adic conv of log G} that  $\log_{G_{\fs}}([a] \bv_{\fs})_{v}$ converges, and by \cite[Thm.~3.3.3]{CM17} that every $\log_{G_{\ell}}([a]([b_{\ell}(t)] \bv_{\ell}))_{v}$ converges for every $\ell \in \cT$.  We have seen that
\begin{enumerate}
\item[$\bullet$] $a(\theta)b_{\ell} \times (-1)^{\dep(\fs_{\ell})-1}\Li_{\fs_{\ell}}^{\star} (\bu_{\ell})_{v}$ is the $n$th coordinate of $\log_{G_{\ell}}([a]([b_{\ell}(t)] \bv_{\ell}))_{v}$ (see~\cite[Def.~4.1.2]{CM17});
\item[$\bullet$] The $n$th coordinate of $\Lie G_{\ell}$ is tractable (see~(\ref{E:Explicit t-moduleCMPL})).
\end{enumerate}

Recall by~(\ref{E:logFunctorial}) that we have the following functional equation:
\begin{equation}\label{E:logFEQ for Gs}
\log_{G_{\fs}}\circ \pi=\partial \pi \circ \left( \oplus_{\ell=1}^{T}  \log_{G_{\ell}} \right).
\end{equation}

Recall by (\ref{E: Def of vs}) $\bv_{\fs}:=\pi \left( ([b_{\ell}(t)] \bv_{\ell})_{\ell}\right)\in G_{\fs}(k)$. Now we consider the specialization at the point $([a]([b_{\ell}(t)] \bv_{\ell}))_{\ell} \in\oplus_{\ell=1}^{T} G_{\ell}(\CC_{v}) $ of both sides of (\ref{E:logFEQ for Gs}) under the $v$-adic convergence. The LHS of (\ref{E:logFEQ for Gs}) evaluated at $([a]([b_{\ell}(t)] \bv_{\ell}))_{\ell}$ is the vector $\log_{G_{\fs}}([a] \bv_{\fs})_{v}$, which is identical to $\partial \pi \left( 
(\log_{G_{\ell}}\left([a]([b_{\ell}(t)] \bv_{\ell}))\right)_{\ell}\right)$ from the RHS of  (\ref{E:logFEQ for Gs}) evaluated at $([a]([b_{\ell}(t)] \bv_{\ell}))_{\ell}$.  By Definition~\ref{Def:pi} we see that the $n$th coordinate of $\partial \pi \left( 
(\log_{G_{\ell}}([a]([b_{\ell}(t)] \bv_{\ell})))_{\ell}\right)$ is given by
\[\sum_{\ell=1}^{T} n\hbox{th coordinate of }\log_{G_{\ell}}\left([a]([b_{\ell}(t)] \bv_{\ell})\right),\]
which is exactly the value (by \cite[Def.~4.1.2]{CM17} and Definition~\ref{Def: v-adic MZV})
\[ \sum_{\ell=1}^{T} a(\theta)\times b_{\ell}(\theta)\cdot (-1)^{\dep(\fs_{\ell})-1}   \Lis_{\fs_{\ell}}(\bu_{\ell})_{v} =a(\theta)\Gamma_{\fs}\zeta_{A}(\fs)_{v} .\]

\subsection{Review of Yu's sub-$t$-module theorem}\label{Subsec: Yu's sub-t-module thm}
 The following notion of {\it{regular}} $t$-modules is due to Yu~\cite[p.~218]{Yu97}.
\begin{definition} Let $G$ be a $t$-module defined over $\ok$. We say that $G$ is regular if there is a positive integer $\nu$ for which the $a$-torsion submodule of $G(\ok)$ is free of rank $\nu$ over $\FF_{q}[t]/(a)$ for every nonzero polynomial $a\in \FF_{q}[t]$. 
\end{definition}

Note that every $n$th tensor power of Carlitz module $\bC^{\otimes n}$ and Drinfeld modules defined over $\ok$ are regular~\cite[p.~217]{Yu97}. Other examples of regular $t$-modules arising from special $\Gamma$-values, see~\cite{S97, BP02}.

\begin{proposition} Given an index $\fs\in \NN^{r}$, we let $G_{\fs}$ be the $t$-module constructed in Section~\ref{Subsec: Set up} .
Then $G_{\fs}$ is regular. 
\end{proposition}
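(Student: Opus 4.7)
The plan is to deduce regularity of $G_{\fs}$ from the fact that it has an Anderson dual $t$-motive $\cM$ which is free of finite rank over $\ok[t]$, by exploiting a filtration of $\cM$ whose graded pieces are Carlitz tensor powers. By Proposition~\ref{Prop: t-motive M}, $\cM$ fits into the short exact sequence of Anderson dual $t$-motives
\[ 0 \to C^{\otimes n} \to \cM \to \bigoplus_{\ell=s+1}^{T} \cM_{\ell}'' \to 0, \]
and inspection of the matrix $\Phi'$ in~(\ref{E:Phi'}) reveals that each $\cM_{\ell}''$ is itself a successive extension of Carlitz tensor powers $C^{\otimes m}$ (since the lower-right $(r_{\ell}-1)\times(r_{\ell}-1)$ block of $\Phi'$ has exactly the same form as $\Phi'$ but for a shorter sequence). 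Passing to associated $t$-modules yields a filtration of $G_{\fs}$ whose successive quotients are $\bC^{\otimes m_{j}}$, each of which is already known to be regular (of rank $1$).

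The key step is an extension lemma: if $0 \to G' \to G \to G'' \to 0$ is a short exact sequence of $t$-modules arising from a short exact sequence of Anderson dual $t$-motives $\cM' \hookrightarrow \cM \twoheadrightarrow \cM''$ of respective $\ok[t]$-ranks $\nu'$, $\nu=\nu'+\nu''$, $\nu''$, and if $G'$ and $G''$ are regular with ranks $\nu'$ and $\nu''$, then $G$ is regular with rank $\nu$. I would prove this by applying the snake lemma to multiplication by $a \in \FF_{q}[t]\setminus\{0\}$:
\[ 0 \to G'[a] \to G[a] \to G''[a] \to G'/aG' \to G/aG \to G''/aG'' \to 0 .\]
The regularity of $G'$ and $G''$ gives $|G'[a]|=q^{\nu'\deg(a)}$ and $|G''[a]|=q^{\nu''\deg(a)}$, and once one independently establishes $|G[a]|=q^{\nu\deg(a)}$ the connecting map $G''[a] \to G'/aG'$ must vanish. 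Thus $0 \to G'[a] \to G[a] \to G''[a] \to 0$ is exact, and since $G''[a]$ is free over $\FF_{q}[t]/(a)$ the sequence splits, giving $G[a] \cong (\FF_{q}[t]/(a))^{\nu}$. A straightforward induction along the filtration of $\cM$ then finishes the proof, with $\nu = \rank_{\ok[t]}\cM = 1 + \sum_{\ell \in \cT_{2}}(\dep(\fs_{\ell})-1)$.

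The main obstacle is establishing the count $|G[a]|=q^{\nu\deg(a)}$ for a $t$-module whose dual $t$-motive $\cM$ is free of rank $\nu$ over $\ok[t]$. Via the snake lemma applied to $0 \to \cM \xrightarrow{a} \cM \to \cM/a\cM \to 0$ with respect to the functor of $(\sigma-1)$-coinvariants, and using that $\ker(\sigma-1,\cM)=0$ (which follows from $(t-\theta)^{s}\cM\subseteq \sigma\cM$ for $s \gg 0$ together with $\cM$ being finitely generated as a $\ok[\sigma]$-module), this reduces to the count $|\ker(\sigma-1,\cM/a\cM)|=q^{\nu\deg(a)}$. Fixing a $\ok[t]$-basis with respect to which $\sigma$ acts as $m \mapsto \Phi m^{(-1)}$ for some $\Phi \in \Mat_{\nu}(\ok[t])\cap \GL_{\nu}(\ok(t))$, the fixed-point equation becomes $x = \Phi x^{(-1)}$ in $(\ok[t]/(a))^{\nu}$; a Lang-isogeny argument over the algebraically closed field $\ok$ shows the solution set is a torsor under $(\FF_{q}[t]/(a))^{\nu}$, yielding the desired cardinality.
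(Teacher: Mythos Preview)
Your proof is correct, but it follows a genuinely different path from the paper's.  The paper argues analytically: since $\cM$ is rigid analytically trivial (Proposition~\ref{Prop: R.A.T}), the $t$-module $G_{\fs}$ is uniformizable (Remark~\ref{Rem: R.A.T}), so $\exp_{G_{\fs}}$ induces an $\FF_q[t]$-module isomorphism $\Lie G_{\fs}(\CC_\infty)/\Lambda_{\fs}\cong G_{\fs}(\CC_\infty)$ where the period lattice $\Lambda_{\fs}$ is free of rank $\nu=\rank_{\ok[t]}\cM$ over $\FF_q[t]$ (citing \cite[Thm.~5.28]{HJ16}).  The $a$-torsion is then read off as $\partial[a]^{-1}\Lambda_{\fs}/\Lambda_{\fs}\cong(\FF_q[t]/(a))^{\nu}$.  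This is short but relies on uniformizability and an external structural result on period lattices.

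Your approach is entirely algebraic: you never invoke rigid analytic triviality or the exponential map, working instead with a filtration of $\cM$ by Carlitz tensor powers, an extension lemma for regularity, and a Lang-isogeny count on $\cM/a\cM$.  This is more self-contained and would apply to any $t$-module arising from a dual $t$-motive whose $\Phi$ has $\det\Phi$ a unit times a power of $(t-\theta)$, even without uniformizability.  One remark: your Lang-isogeny step actually yields more than a cardinality.  Since $\sigma$ acts bijectively and $\FF_q[t]/(a)$-linearly on the free $\ok[t]/(a)$-module $\cM/a\cM$, the theory of \'etale $\phi$-modules over the separably closed field $\ok$ gives directly that $(\cM/a\cM)^{\sigma=1}$ is free of rank $\nu$ over $\FF_q[t]/(a)$; combined with your identification $G_{\fs}[a]\cong(\cM/a\cM)^{\sigma=1}$, this proves regularity outright and renders the filtration and extension lemma unnecessary.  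Either way the argument goes through; the phrase ``torsor under $(\FF_q[t]/(a))^{\nu}$'' is slightly off (the solution set is a subgroup, not merely a torsor), but what you actually use---the count $q^{\nu\deg a}$ from the separable isogeny of degree $q^{\nu\deg a}$---is correct.
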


\begin{proof} Note that $G_{\fs}$ is the $t$-module associated to the rigid analytically trivial Anderson dual $t$-motive $\cM$. Let $\nu$ be the rank of $\cM$ over $\ok[t]$. Since $G_{\fs}$ is uniformizable by Proposition~\ref{Prop: R.A.T} and Remark~\ref{Rem: R.A.T}, we have the following $\FF_{q}[t]$-module isomorphism via $\exp_{G_{\fs}}$:
\[ \Lie G_{\fs}(\CC_{\infty})\big/ \Lambda_{\fs} \cong G_{\fs}(\CC_{\infty}), \]
where $\Lambda_{\fs}:=\Ker \exp_{G_{\fs}} \subset \Lie G_{\fs}(\CC_{\infty})\cong \CC_{\infty}^{\dim G_{\fs}}$ is a discrete free $\FF_{q}[t]$-submodule of rank $\nu$ by \cite[Thm.~5.28]{HJ16} (cf.~\cite[Thm.~4]{A86}). It follows that the $a$-torsion submodule of $G_{\fs}(\ok)$ is isomorphic to 
\[\partial [a(t)]^{-1}\Lambda_{\fs}\big/\Lambda_{\fs}\cong \left(\FF_{q}[t]/(a) \right)^{\nu} \] for any nonzero polynomial $a\in \FF_{q}[t]$. 
\end{proof}
Let $G$ be a $t$-module defined over $\ok$. Note that $G$ is also regarded as a linear algebraic group over $\bar{k}$. Any connected linear algebraic subgroup of $G$ that is defined over $\ok$ and that is invariant under the $\FF_{q}[t]$-action is called a sub-$t$-module of $G$ over $\ok$.  The spirit of Yu's sub-$t$-module theorem stated in Theorem~\ref{Thm: Yu Introduction} is that for a given logarithmic vector $Z$ of an algebraic point on a regular $t$-module $G$ defined over $\ok$, the smallest $\partial[t]$-invariant vector subspace over $\ok$ in $\Lie G(\CC_{\infty})$ containing that logarithmic vector must
be $\Lie H(\CC_{\infty})$ for some sub-$t$-module $H\subset G$ over $\ok$.

 \subsection{The main result} 
We call a positive integer $n$ \lq\lq even\rq\rq \ if $n$ is divisible by $q-1$; otherwise we call $n$  \lq\lq odd\rq\rq.  As mentioned in the introduction, $(1-v^{-n})\zeta_{A}(n)_{v}$ is identical to Goss' $v$-adic zeta value at $n$, by~\cite{Go79} we know that $\zeta_{A}(n)_{v}=0$ for $n$ \lq\lq even\rq\rq, and by~\cite{Yu91} $\zeta_{A}(n)_{v}$ is transcendental over $k$ for $n$ ``odd''. The main result of this section is as follows.

\begin{theorem}\label{Thm: Well-defined map} Let $v$ be a finite place of $k$ and
 fix a positive integer  $n$. Let $\overline{\cZ}_{n}$ be the $\ok$-vector space spanned by all $\infty$-adic MZV's of weight $n$, and $\overline{\cZ}_{n,v}$ be the $\ok$-vector space spanned by all $v$-adic MZV's  of weight $n$. Then we have a well-defined surjective $\ok$-linear map
\[\overline{\cZ}_{n} \twoheadrightarrow \overline{\cZ}_{n,v} \]
given by 
\[\zeta_{A}(\fs)\mapsto \zeta_{A}(\fs)_{v}, \]
and its kernel contains the one-dimensional vector space $\ok\cdot \zeta_{A}(n)$ when $n$ is ``even''.
\end{theorem}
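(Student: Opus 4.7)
The plan is to establish well-definedness of the map (surjectivity being immediate from the definitions of $\overline{\cZ}_n$ and $\overline{\cZ}_{n,v}$), and then dispatch the kernel claim. Concretely, given a finite $\ok$-linear relation $\sum_{i=1}^{m} c_i \zeta_A(\fs_i) = 0$ among weight-$n$ MZV's, one must show $\sum_{i=1}^{m} c_i \zeta_A(\fs_i)_v = 0$. Applying Theorem~\ref{Thm: Introduction} to each $\fs_i$ furnishes a $t$-module $G_{\fs_i}$ over $k$, a point $\bv_{\fs_i} \in G_{\fs_i}(k)$, and a vector $Z_{\fs_i} \in \Lie G_{\fs_i}(\CC_\infty)$ whose $n$th coordinate is $\Gamma_{\fs_i}\zeta_A(\fs_i)$ and which satisfies $\exp_{G_{\fs_i}}(Z_{\fs_i}) = \bv_{\fs_i}$. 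I would package these into the regular uniformizable $t$-module $G := \bigoplus_{i=1}^{m} G_{\fs_i}$ over $k$, together with $\bv := (\bv_{\fs_i})_i \in G(k)$ and $Z := (Z_{\fs_i})_i \in \Lie G(\CC_\infty)$ satisfying $\exp_G(Z) = \bv$.

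Define the $\ok$-linear functional $\lambda : \Lie G(\CC_\infty) \to \CC_\infty$ by $\lambda(W) := \sum_i (c_i/\Gamma_{\fs_i})(W_i)_n$, where $(W_i)_n$ denotes the $n$th coordinate of the $i$th summand; then $\lambda(Z) = 0$ by hypothesis. The crucial technical step, which I expect to be the main obstacle, is verifying the intertwining identity $\lambda \circ \partial[t] = \theta \cdot \lambda$. This follows from inspecting the explicit $t$-action (\ref{E:Explicit t-moduleCMPL}) on each $G_{\fs_i}$: the differential $\partial \rho_t = \theta I_d + N$ has its $n$th row equal to $(0, \dots, 0, \theta, 0, \dots, 0)$ (the Frobenius part $E\tau$ drops out, and the last row of the nilpotent block $N_1$ is zero), so the $n$th coordinate of $\Lie G_{\fs_i}$ is tractable in the sense of Definition~\ref{Def:Tractable}. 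Consequently $\ker \lambda$ is a $\partial[t]$-invariant $\CC_\infty$-subspace of $\Lie G(\CC_\infty)$ defined over $\ok$ and containing $Z$, so Yu's sub-$t$-module theorem produces a sub-$t$-module $H \subseteq G$ over $\ok$ with $Z \in \Lie H(\CC_\infty) \subseteq \ker \lambda$; in particular $\bv = \exp_G(Z) \in H(\CC_\infty) \cap G(\ok) = H(\ok)$.

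To transfer the relation to the $v$-adic setting, I would pick a nonzero $a \in \FF_q[t]$ with $|[a]\bv|_v < 1$ by applying Proposition~\ref{Prop: conv at av} summand-by-summand; then Proposition~\ref{Prop: v-adic conv of log G} ensures that $\log_G([a]\bv)_v$ converges, and the formal identity $\log_G \circ \iota = \partial\iota \circ \log_H$ for the closed immersion $\iota \colon H \hookrightarrow G$ places $\log_G([a]\bv)_v$ inside $\Lie H(\CC_v)$. Since $\ker \lambda$ is cut out by a single $\ok$-linear equation, base-changing to $\CC_v$ yields $\Lie H(\CC_v) \subseteq \ker \lambda_v$ for the natural $\CC_v$-linear extension $\lambda_v$ of $\lambda$; applying Theorem~\ref{T: LogInt for v-adic MZV} coordinate-wise converts $\lambda_v(\log_G([a]\bv)_v) = 0$ into $a(\theta) \sum_i c_i \zeta_A(\fs_i)_v = 0$, and dividing by $a(\theta) \neq 0$ finishes the well-definedness argument. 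For the kernel claim, $\zeta_A(n)$ is nonzero in $k_\infty$ by Thakur's non-vanishing theorem~\cite{T09}, so $\ok \cdot \zeta_A(n) \subseteq \overline{\cZ}_n$ is one-dimensional; Goss's vanishing~\cite{Go79} together with $1 - v^{-n} \neq 0$ in $k_v$ then forces $\zeta_A(n)_v = 0$ whenever $q-1$ divides $n$, placing this line inside the kernel.
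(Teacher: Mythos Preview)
Your approach mirrors the paper's proof almost exactly: form the direct sum $G = \bigoplus_i G_{\fs_i}$, observe that the hyperplane $\ker\lambda$ cut out by the relation is $\partial[t]$-invariant (tractability of the $n$th coordinate), apply Yu's sub-$t$-module theorem to trap $Z$ inside $\Lie H$ for a sub-$t$-module $H\subset G$ over $\ok$, and then pass to the $v$-adic side. Two points deserve attention.

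First, a small mislabeling: formula~(\ref{E:Explicit t-moduleCMPL}) describes the $t$-action on the building blocks $G_\ell$ attached to a single triple $(\fs_\ell,\bu_\ell)$, not on $G_{\fs_i}$, which is the fiber coproduct of several such $G_\ell$'s over $C^{\otimes n}$. Your computation of the $n$th row of $\partial\rho_t$ is correct for $G_\ell$; the tractability of the $n$th coordinate of $\Lie G_{\fs_i}$ then follows, but via the explicit description of $\pi$ and the fiber coproduct (as in the proof of Lemma~\ref{Lemma: Key Lemma}), not by direct citation of (\ref{E:Explicit t-moduleCMPL}). Incidentally, this tractability check is not the main obstacle; it is routine.

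The genuine gap is in your $v$-adic transfer step. You write that the formal identity $\log_G\circ\iota=\partial\iota\circ\log_H$ ``places $\log_G([a]\bv)_v$ inside $\Lie H(\CC_v)$.'' But that identity is an equality of formal power series in $\tau$; to conclude anything about $v$-adic values you must first know that $\log_H$ converges $v$-adically at the preimage $x\in H(\ok)$ of $[a]\bv$. Nothing you have established gives this: Proposition~\ref{Prop: v-adic conv of log G} controls $\log_{G_{\fs_i}}$ (and hence $\log_G$), not $\log_H$ for an unknown sub-$t$-module $H$ over $\ok$. The paper fills this gap as follows. Write $\iota=\sum_{i=0}^{N}A_i\tau^i$ with $A_0=\partial\iota$ injective (rank $h=\dim H$), and observe that since $\iota$ is a \emph{polynomial} in $\tau$, the convergence of $\log_G$ at $\iota(x)=[a]\bv$ implies the convergence of the composed series $(\sum_j C_j\tau^j)\circ(\sum_i A_i\tau^i)$ at $x$. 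By the formal identity this equals $A_0\cdot\log_H$, and injectivity of $A_0$ (over the complete field $\CC_v$) then forces $\log_H(x)_v$ itself to converge. Only then does $\log_G([a]\bv)_v=A_0\cdot\log_H(x)_v\in\partial\iota(\Lie H(\CC_v))$ follow. You should insert this argument; without it the passage from the formal functoriality (\ref{E:logFunctorial}) to an actual $v$-adic containment is unjustified.
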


In other words, the theorem above shows that the $v$-adic MZV's of weight $n$ satisfy the same $\ok$-linear relations that their corresponding $\infty$-adic MZV's of weight $n$ satisfy. 

\begin{proof}[Proof of Theorem~\ref{Thm: Well-defined map}]
Suppose that we have a non-trivial $\ok$-linear relation among some MZV's of weight $n$
\[ c_{1}\zeta_{A}(\fs_{1})+\cdots +c_{m}\zeta_{A}(\fs_{m}) =0,\]
which we rewrite as 
\begin{equation}\label{E:LinearRe}
\epsilon_{1}\Gamma_{\fs_{1}}\zeta_{A}(\fs_{1})+\cdots +\epsilon_{m}\Gamma_{\fs_{m}}\zeta_{A}(\fs_{m})=0 ,
\end{equation}
where $\left\{ \epsilon_{i}:=\frac{c_{i}}{\Gamma_{\fs_{i}}} \right\}_{i=1}^{m}$ are not all zero. For each index $\fs_{i}$, let $G_{\fs_{i}}$ be the $t$-module defined over $k$, $\bv_{\fs_{i}}\in G_{\fs_{i}}(k)$ be the special point and $Z_{\fs_{i}}\in \Lie G_{\fs_{i}}(\CC_{\infty})$ be the vector given in Theorem~\ref{Thm: Introduction}. We identify $\Lie G_{\fs_{i}}$ with ${\mathbb{A}^{\dim G_{\fs_{i}}}}_{/ k}$, the affine variety of dimension $g_{i}:=\dim G_{\fs_{i}}$ over $k$, and let $\bX_{i}:=\left(X_{i1},  \ldots,X_{i g_{i}}\right)^{\tr}$ be the coordinates of $\Lie G_{\fs_{i}}$.   Let $G:=G_{\fs_{1}}\oplus \cdots \oplus G_{\fs_{m}}$ be the $t$-module as direct sum of $\left\{  G_{\fs_{i}}\right\}_{i=1}^{m}$ and so $\Lie G$ is identified with ${\mathbb{A}^{g_{1}+\cdots+g_{m}}}_{/ k}$ with coordinates 
\[\bX=\left( \bX_{1}^{\tr},\ldots,\bX_{m}^{\tr} \right)^{\tr}. \]

Let $V$ be the smallest linear subspace of $\Lie G(\CC_{\infty})$ defined over $\ok$ for which
\begin{enumerate}
\item[$\bullet$] $V$ contains the vector $Z:=\left(Z_{\fs_{1}}^{\tr},\ldots,Z_{\fs_{m}}^{\tr}  \right)^{\tr} \in \Lie G (\CC_{\infty})$.
\item[$\bullet$] $V$ is invariant under the $\partial [t]$-action.
\end{enumerate} 
We define  the following hyperplane over $\ok$
\[ W:=\left\{ \epsilon_{1}X_{1n}+\cdots+\epsilon_{m}X_{mn}=0 \right\} \subset \Lie G,\]
where we simply use $\Lie G$ for the base change of $\Lie G$ over $\ok$ when it is clear from the contents, and note that $Z\in W(\CC_{\infty})$. We further note that since the $n$th coordinate of $\Lie G_{\fs_{i}}$ is tractable for each $i$, $W$ is invariant under $\partial[t]$ and we see that $V\subset W(\CC_{\infty})$. By Theorem~\ref{Thm: Yu Introduction} there exists a sub-$t$-module $H\subset G$ over $\ok$ for which 
\[V= \Lie H(\CC_{\infty}). \]
Let $\mathcal{V}\subset \Lie G$ be the linear sub-variety underlying $V$. That is, $\mathcal{V}$ is the variety defined by the defining equations of $V\subset \Lie G (\CC_{\infty})$ over $\ok$. So we have $\mathcal{V}(\CC_{\infty})=V=\Lie H(\CC_{\infty})$, and hence $\mathcal{V}=\Lie H$.

For each $\fs_{i}$, by Proposition~\ref{Prop: conv at av} we are able to pick a nonzero $a_{i}\in \FF_{q}[t]$ for which $|[a_{i}]\bv_{\fs_{i}}|_{v}<1$. Put $a:=\prod_{i=1}^{m}a_{i}$. Note that by Corollary~\ref{Cor: defined over A} the action $[t]$ on each $G_{\fs_{i}}$ has coefficient matrices with entries in $A$. Therefore, we have that 
\[ |[a]\bv_{\fs_{i}}|_{v}\leq |[a_{i}]\bv_{\fs_{i}}|_{v}<1, \]
and hence by Proposition~\ref{Prop: v-adic conv of log G}   $\log_{G_{\fs_{i}}}([a] \bv_{\fs_{i}})_{v}$ converges for each $1\leq i \leq m$. Define
\[\bv:=
 \begin{pmatrix}
\bv_{\fs_{1}}\\
\vdots\\
\bv_{\fs_{m}} \\
\end{pmatrix} \in G(k).
\]

We claim that for each $\epsilon > 0$, there exists $\ell > 0$ such that $|[v(t)^{\ell}] ([a] \bv)|_{v} < \epsilon$.
By the construction, $G$ is an iterated extension of tensor powers of the Carlitz $t$-module.
More precisely, let $\be_{1}, \dots, \be_{d}$ be the standard basis of $G = \GG_{a}^{d}$ (for some $d\in \NN$).
Then there exists a sequence $I_{0} := \emptyset \subsetneq I_{1} \subsetneq \cdots \subsetneq I_{f} := \{ 1, \dots, d \}$ such that
if we set $G_{f}:=G$ and $G_{i} := \GG_{a}^{|I_{i}|}$ with basis $\be_{j}$ ($j \in I_{i}$), then $G_{i}$ forms a quotient $t$-module of $G$ defined over $A$, and the kernel of the natural projection $G_{i} \to G_{i-1}$ is a tensor power of $\bC$ for each $1 \leq i \leq f$:
\[
0 \to \bC^{\otimes (|I_{i}| - |I_{i-1}|)} \to G_{i} \to G_{i-1} \to 0.
\]

We consider the image of $[a] \bv$ in $G_{i}(A)$ via the natural projection $G \to G_{i}$ and then modulo powers of $v$. Using an inductive argument on $i$, it suffices to show that for each $s \in \NN$ and each $\bx \in \bC^{\otimes s}(\ok)$ with $|\bx|_{v} < 1$, there exists $\ell > 0$ such that $|[v(t)^{\ell}]_{s} \bx|_{v} < \epsilon$.
By \cite[Proposition 1.6.1]{AT90}, we have $[v(t)^{s}]_{s} = \tau^{\deg v} + v \alpha$ for some $\alpha \in \Mat_{s}(A[\tau])$.
Therefore, for large $\ell > 0$ divisible by $s$, we have $|[v(t)^{\ell}]_{s} \bx|_{v} < \epsilon$.

Since $V$ is invariant under the $\partial[t]$-action, we have that  $\partial [a] Z \in \mathcal{V}(\CC_{\infty})=\Lie H(\CC_{\infty})$,  whence 
\[ \exp_{G}\left( \partial[a] Z \right)= [a] \bv \in  G(k)\cap H(\ok)\subset H(\ok) .\]
Let $\phi: H\hookrightarrow G$ be the natural embedding morphism of $t$-modules. Note that $H$ is a linear algebraic group and so it is smooth. Since $H$ is smooth, $p$-torsion, commutative, affine and connected, by \cite[Lemma B.1.10]{CGP10}, $H$ is isomorphic to $\GG_{a}^{h}$ for some $h$ over $\ok$.
Fix an isomorphism $H \cong \GG_{a}^{h}$ over $\ok$. By using this identification, we write
\[
\phi = \sum_{j=0}^{N} A_{j} \tau^{j} \colon \Ga^{h} \cong H \hookrightarrow G = \Ga^{g_{1} + \cdots + g_{m}}, \ \ \ A_{j} \in \Mat_{(g_{1} + \cdots + g_{m}) \times h}(\ok)
\]
and
\[
\log_{H} = \sum_{i \geq 0} B_{i} \tau^{i}, \ \ \ B_{i} \in \Mat_{h}(\ok).
\]
We also write
\[
\log_{G} = \sum_{i \geq 0} C_{i} \tau^{i}, \ \ \ C_{i} \in \Mat_{g_{1} + \cdots + g_{m}}(\ok).
\]
By the functional equation (\ref{E:logFunctorial}), we have
\[
A_{0} \circ \left( \sum_{i \geq 0} B_{i} \tau^{i} \right) = \left( \sum_{i \geq 0} C_{i} \tau^{i} \right) \circ \left( \sum_{j=0}^{N} A_{j} \tau^{j} \right)=\sum_{\ell \geq 0}\left( \sum_{0\leq j\leq N, i+j=\ell} C_{i}A_{j}^{(i)}\right)\tau^{\ell}
\]
as formal power series. 

We take $0<\epsilon<1$ for which the power series $\sum_{i \geq 0} C_{i} \tau^{i}$ converges on the domain 
\[\left\{ \bx\in \Lie G(\CC_{v})| \hbox{ } |\bx|_{v}<\epsilon \right\}  .\]
According to the claim above, we are able to take  an $\ell > 0$  and then replace $a$ by $v(t)^{\ell} a$ if necessary so that

\[\max_{0\leq j \leq N}\left\{\bigl|A_{j}([a]\bv)^{(j)} \bigr|_{v} \right\}< \epsilon,\]
whence \[ \Bigl|C_{i}\left(A_{j}([a]\bv)^{(j)} \right)^{(i)} \Bigr|_{v}\rightarrow 0 \hbox{ as }i\rightarrow \infty \]
for every $0\leq j\leq N$. Thus, we have the following identity 
\[ \left( \left( \sum_{i \geq 0} C_{i} \tau^{i} \right) \circ \left( \sum_{j=0}^{N} A_{j} \tau^{j} \right)\right) ([a]\bv)=  \left( \sum_{i \geq 0} C_{i} \tau^{i} \right) \left( \left( \sum_{j=0}^{N} A_{j} \tau^{j} \right) ([a]\bv) \right).\]

Since $[a] \bv$ is $v$-adically small enough from the above, and $H$ is invariant under the $[a]$-action, we can pull back $[a] \bv$ via the embedding $\phi$, and the above functional equation among formal power series implies the corresponding equality among vectors over $\CC_{v}$:
\begin{align*}
\left( \log_{G_{\fs_{i}}}([a] \bv_{\fs_{i}})_{v} \right)_{i} &= \log_{G}([a] \bv)_{v} = \sum_{i \geq 0} C_{i} \tau^{i} \left( \sum_{j=0}^{N} A_{j} \tau^{j}([a] \bv) \right) \\
&= \left( \left( \sum_{i \geq 0} C_{i} \tau^{i} \right) \circ \left( \sum_{j=0}^{N} A_{j} \tau^{j} \right) \right) ([a] \bv)
= \left( A_{0} \circ \left( \sum_{i \geq 0} B_{i} \tau^{i} \right) \right) ([a] \bv) \\
&= A_{0} \sum_{i \geq 0} B_{i} \tau^{i} ([a] \bv)
= \log_{H}([a] \bv)_{v} \in \Lie H(\CC_{v}).
\end{align*}

Note that since $\mathcal{V}(\CC_{\infty})=V\subset W(\CC_{\infty})$, we have  $\Lie H =\mathcal{V} \subset W$. It follows that  the vector $\left( \log_{G_{\fs_{i}}}([a] \bv_{\fs_{i}})_{v} \right)_{i}$ belongs to $W(\CC_{v})$, whence  satisfying the $\ok$-linear relations
\[ \epsilon_{1}X_{1n}+\cdots+\epsilon_{m}X_{mn}=0    .\]
By Theorem~\ref{T: LogInt for v-adic MZV} the $n$th coordinate of $\log_{G_{\fs_{i}}}([a] \bv_{\fs_{i}})$ is the value $a(\theta)\Gamma_{\fs_{i}}\zeta_{A}(\fs_{i})_{v}$ and hence we obtain the desired identity
 \[ c_{1}\zeta_{A}(\fs_{1})_{v}+\cdots+c_{m}\zeta_{A}(\fs_{m})_{v}=0 .\]

Note that by \cite{Go79} we have $\zeta_{A}(n)_{v}=0$ for $n\in \NN$ \lq\lq even\rq\rq \ as our $v$-adic zeta value at $n$ is  Goss $v$-adic zeta value at $n$ multiplied by $(1-v^{-n})^{-1}$ (see \cite[Theorem 3.8.3. (II)]{AT90}). Therefore the second assertion follows immediately. 
\end{proof}

\begin{corollary}\label{Cor: Consequence} Let $v$ be a finite place of $k$. Set $\overline{\cZ}_{0} := \overline{\cZ}_{0,v} := \ok$. Let $\overline{\cZ}:=\sum_{n=0}^{\infty} \overline{\cZ}_{n}$ be the $\ok$-vector space spanned by all MZV's, and let $\overline{\cZ}:=\sum_{n=0}^{\infty} \overline{\cZ}_{n,v}$ be the  $\ok$-vector space spanned by all $v$-adic MZV's. Then we have the $\ok$-linear map
\[\overline{\cZ} \twoheadrightarrow \overline{\cZ}_{v} \]
given by 
\[\zeta_{A}(\fs)\mapsto \zeta_{A}(\fs)_{v}. \]
\end{corollary}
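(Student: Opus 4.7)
The plan is to reduce the corollary to the per-weight statement in Theorem~\ref{Thm: Well-defined map} by combining it with the weight grading on $\overline{\cZ}$ established by the first author in \cite{C14}. The only thing that genuinely needs checking is well-definedness of the assignment $\zeta_{A}(\fs)\mapsto \zeta_{A}(\fs)_{v}$ as an $\ok$-linear map out of $\overline{\cZ}$; surjectivity is immediate since every generator $\zeta_{A}(\fs)_{v}$ of $\overline{\cZ}_{v}$ lies in the image by construction.

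First, I would invoke the key result from \cite{C14}, recalled in the paragraph following Theorem~\ref{Thm: Well-defined map in Introduction}, which asserts that all $\ok$-linear relations among MZV's are generated by $k$-linear relations among MZV's of the same weight. Equivalently, MZV's of distinct weights are $\ok$-linearly independent, so the natural summation map yields an internal direct sum decomposition
\[
\overline{\cZ}=\bigoplus_{n=1}^{\infty}\overline{\cZ}_{n}
\]
of $\ok$-vector spaces. To define the map in the corollary, I would use this decomposition to define
\[
\phi:\overline{\cZ}\longrightarrow \overline{\cZ}_{v},\qquad \phi\Bigl(\sum_{n}\alpha_{n}\Bigr):=\sum_{n}\phi_{n}(\alpha_{n}),
\]
where $\phi_{n}:\overline{\cZ}_{n}\twoheadrightarrow \overline{\cZ}_{n,v}$ is the surjective $\ok$-linear map constructed in Theorem~\ref{Thm: Well-defined map}, and where the sums are finite since each element of $\overline{\cZ}$ lies in a finite sum of the $\overline{\cZ}_{n}$.

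Next I would verify that $\phi$ sends the generator $\zeta_{A}(\fs)$ to $\zeta_{A}(\fs)_{v}$: this is tautological because $\zeta_{A}(\fs)$ lies in the pure weight component $\overline{\cZ}_{\wt(\fs)}$, and on that component $\phi$ agrees with $\phi_{\wt(\fs)}$, which is precisely the per-weight map of Theorem~\ref{Thm: Well-defined map}. Finally, the target $\overline{\cZ}_{v}=\sum_{n}\overline{\cZ}_{n,v}$ is the sum of the images of the $\phi_{n}$, so $\phi$ is surjective.

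The hard part here is not really in this corollary at all: the real work is packaged into Theorem~\ref{Thm: Well-defined map} (which uses the logarithmic interpretations of Theorems~\ref{Thm: Introduction} and \ref{T: LogInt for v-adic MZV} together with Yu's sub-$t$-module theorem) and into the weight-grading theorem of \cite{C14}. The only subtle point in assembling them is that one must use the weight-grading statement over $\ok$ (not merely over $k$), so that the direct sum decomposition above is valid as $\ok$-vector spaces; this is exactly the form in which it appears in \cite{C14}, and it is what allows the piecewise definition of $\phi$ to be unambiguous.
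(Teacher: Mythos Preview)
Your proposal is correct and follows essentially the same approach as the paper: both invoke the direct sum decomposition $\overline{\cZ}\cong\bigoplus_{n}\overline{\cZ}_{n}$ from \cite[Thm.~2.2.1]{C14} and then assemble the per-weight maps $\overline{\cZ}_{n}\twoheadrightarrow\overline{\cZ}_{n,v}$ of Theorem~\ref{Thm: Well-defined map} into a single surjection onto $\overline{\cZ}_{v}$. Your version spells out the well-definedness and surjectivity a bit more explicitly, but the argument is the same.
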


\begin{proof}
By \cite[Thm.~2.2.1]{C14}, we have a natural isomorphism $\oplus_{n} \overline{\cZ}_{n} \cong \overline{\cZ}$ of $\ok$-algebras.
Thus the $\ok$-linear maps $\overline{\cZ}_{n} \twoheadrightarrow \overline{\cZ}_{n,v}$ in Theorem \ref{Thm: Well-defined map} imply the $\ok$-linear map \[ \overline{\cZ} \cong \oplus_{n} \overline{\cZ}_{n} \twoheadrightarrow \oplus_{n} \overline{\cZ}_{n,v} \twoheadrightarrow \overline{\cZ}_{v}. \]
\end{proof}

\begin{remark}\label{Rem: future project}
Based on the results above, the following are some natural questions which need additional work.
\begin{enumerate}
\item Does $\overline{\cZ}_{v}$ have an algebra structure?
\item For each positive integer $n$, what is the kernel of the above map $\overline{\cZ}_{n}\twoheadrightarrow \overline{\cZ}_{n,v}$?
\item Is the above map $\overline{\cZ}\twoheadrightarrow \overline{\cZ}_{v}$ an algebra homomorphism? If so, what is its kernel?
\end{enumerate}
\end{remark}

\bibliographystyle{alpha}

\end{document}